\newmdenv[linecolor=green!50!black, fontcolor=green!50!black, backgroundcolor=green!20, linewidth=2pt, roundcorner=10pt]{gnote}
\newcommand{\round}{\tilde{s}}
\newcommand{\red}[1]{\textcolor{black}{#1}}
\newcommand{\blue}[1]{\textcolor{black}{#1}}
\newcommand{\teal}[1]{\textcolor{black}{#1}}
\newcommand{\forward}{{\Theta}}
\newcommand{\littleforward}{{\theta}}
\newcommand{\vecx}{{\bx}}
\newcommand{\vecy}{{\by}}
\newcommand{\bigvecx}{{\bm{X}}}
\newcolumntype{L}{>{$}l<{$}} 
\newcommand{\ba}{{\bm a}}
\newcommand{\bc}{{\bm c}}
\newcommand{\bd}{{\bm d}}
\newcommand{\bw}{{\bm w}}
\newcommand{\bx}{{\bm x}}
\newcommand{\by}{{\bm y}}
\newcommand{\bz}{{\bm z}}
\newcommand{\bfz}{{\bm 0}}
\newcommand{\bfo}{{\bm 1}}
\newcommand{\cA}{\mathcal{A}}
\newcommand{\cB}{\mathcal{B}}
\newcommand{\cC}{\mathcal{C}}
\newcommand{\cI}{\mathcal{I}}
\newcommand{\cL}{L}
\newcommand{\cN}{\mathcal{N}}
\newcommand{\cP}{\mathcal{P}}
\newcommand{\cR}{\mathcal{R}}
\newcommand{\cS}{\mathcal{S}}
\newcommand{\cU}{\mathcal{U}}
\newcommand{\C}[1]{} % comment out text
\newcommand{\var}{\mathop{\mathrm{var}}}
\newcommand{\supp}{\mathop{\mathrm{supp}}}
\newcommand{\hp}{\hphantom{-}}
\newcommand{\uniform}{\ensuremath{\mathcal{U}}\xspace}
\title{Enhancing ZFP: A Statistical Approach to Understanding and Reducing Error Bias in a Lossy Floating-Point Compression Algorithm\thanks{Submitted to the editors July 1st, 2024.
		\funding{This work was
% PL: Auspices statement appears in Acknowledgments (this is about funding)
% performed under the auspices of the U.S. Department of Energy by Lawrence Livermore National Laboratory under Contract DE-AC52-07NA27344 and was
  supported by the LLNL-LDRD Program under Project No.\ 17-SI-004 and by the Office of Science, Office of Advanced Scientific Computing Research. LLNL-JRNL-858256.}}}
\author{ Alyson Fox\thanks{Lawrence Livermore National Laboratory,
		Livermore, CA (\email{fox33@llnl.gov})}
	\and Peter Lindstrom\thanks{Lawrence Livermore National Laboratory,
		Livermore, CA (\email{pl@llnl.gov})}
}
\begin{document}

\maketitle

% REQUIRED
\begin{abstract}
The amount of data generated and gathered in scientific simulations and data collection applications is continuously growing, putting mounting pressure on storage and bandwidth concerns. A means of reducing such issues is data compression; however, lossless data compression is typically ineffective when applied to floating-point data. Thus, users tend to apply a lossy data compressor, which allows for small deviations from the original data. It is essential to understand how the error from lossy compression impacts the accuracy of the data analytics. Thus, we must analyze not only the compression properties but the error as well. In this paper, we provide a statistical analysis of the error caused by ZFP compression, a state-of-the-art, lossy compression algorithm explicitly designed for floating-point data. We show that the error is indeed biased and propose simple modifications to the algorithm to neutralize the bias and further reduce the resulting error. 

%IEEE Standard floating-point numbers was first adopted in 1985 and is was the most widely used format for floating-point representation and computation. However, recent computational demands on the high performance computing community have pushed the need of designing new floating-point representations that are more memory and computational efficient. ZFP, though first constructed as a lossy compression scheme for I/O data movement, has been further designed to be a direct replacement for IEEE. With the same number of bits per value, ZFP can achieve high precision and a larger dynamic range that IEEE. With theoretical backing from \cite{errorzfp} and \cite{}, and an adoption into new hardware, ZFP is a true contender at replacing IEEE. 
\end{abstract}
% REQUIRED
\begin{keywords}
  Lossy compression, ZFP, statistical bias, error distribution, data-type conversion, floating-point representation, error bounds
\end{keywords}

% REQUIRED
\begin{AMS}
  65G30, 65G50, 68P30 
\end{AMS}

% Body
\section {Introduction}
\label{sec:introduction} 
Data is now generated from everywhere. Advances in sensing technology have enabled massive data sets from experimental and observational facilities to be gathered. Additionally, due to the advances in processors, FLOPs are now considered free, enabling scientific simulations to produce petabyte-sized data sets. Not only are the storage requirements an issue, but these data sets frequently need to be transferred, causing additional bandwidth concerns. One way to combat these growing issues is by  reducing the number of bits, which would mitigate both storage and bandwidth concerns.

%Two main factors affect the speed in which a computer can process data, (1) the communication pathway and (2) the size of the data, i.e., the number of bits. 
%There has been a large amount of research in high-performance computing to build interconnects that are efficient by ensuring the minimal distance between processors. However, we are at the limits of advancing the interconnect to combat the data movement bottleneck problem. The only alternate avenue to combat this issue is, thus, reducing the number of bits, which would mitigate not only the bandwidth concerns but also storage.
Compression algorithms have been a clear choice in reducing the size of data; there are two types of compression algorithms, lossless and lossy. Lossless data compression compresses the data with no degradation of the values. However, for applications involving floating-point data, lossless compression only gives a modest reduction in the bandwidth and storage costs. Instead, the scientific community has been more interested in lossy data compression (SZ \cite{sz}, ZFP \cite{zfp}), which inexactly reconstructs the floating-point values. Specifically, we consider the ZFP compressor that individually compresses and decompresses small blocks of $4^d$ values from $d$-dimensional data. Unlike many traditional compression algorithms that require global information, ZFP is ideal  for storing simulation data, since only the block containing a particular data value needs to be uncompressed, similar to standard random access arrays.
%	Additionally, scientific applications have targeted compression algorithms as a way to optimize performance. Two main factors affect the speed in which a computer can process data, (1) the communication pathway and (2) the size of the data, i.e., the number of bits. There has been a large amount of research in high-performance computing to build interconnects that are efficient by ensuring the minimal distance between processors. However, we are at the limits of advancing the interconnect to combat the data movement bottleneck problem. The only alternate avenue to combat this issue is, thus, reducing the number of bits, which would mitigate not only the bandwidth concerns but also storage.} Streaming compression, especially, has been identified as a potential method for speeding up scientific applications; however, many traditional compression algorithms require global information inhibiting possible streaming capabilities. ZFP, on the other hand,  first introduced in \cite{zfp}, only uses local information by first decomposing the data set into blocks, which are compressed and decompressed independently. By compressing each block independently, streaming variants of ZFP can be achieved. 
 
Typically the error caused by any lossy compression algorithm is deemed acceptable as the data gathered is \teal{already} noisy from simulation error, such as truncation, iteration and round-off error, or observational error, such as finite precision measurements and measurement noise. 
Many data and statistical analytics assume the error is i.i.d. and, in many cases, it is further preferable that the error conforms to Gaussian white noise centered around zero. However, many of the compression algorithms in use have little to no rational or theoretical backing to ensure the error is indeed not biased or \teal{spatially} correlated, and thus, many of the conclusions from the resulting statistical analysis may be incorrect. 
 
There are many applications in which the error from a lossy compression algorithm could change the underlying phenomena. 
Recent studies have investigated the effects of lossy data compression for specific applications or data sets \cite{Laney,Treib,Baker}. 
All works indicate that in order ``to preserve the integrity of the scientific simulation data, the effects of lossy data compression on the original data should, at a minimum, not be statistically distinguishable from the natural variability of the system \cite{Baker}."  
Thus, it is clear that each lossy compression algorithm should ensure that the error is not biased. 
Consequently, it is surprising that many compression algorithms tend to discuss only the compression ratio and the accuracy of the solution from a mean square error viewpoint. 
Lindstrom \cite{lindstromdistrib} discusses from an empirical standpoint the impact of the distribution and \teal{auto}correlation of the error for a variety of compressors.
Grosset et al.~\cite{forsight}  developed Forsight, an analysis framework to evaluate different data-reduction techniques for scientific analyses. 
Tao et al.~\cite{zchecker}  and Wegener \cite{Wegener} both provide tools to analyze the error distribution for a specific data set for a variety of compressors but do not provide a general theoretical rationale. Liu et al.~\cite{Liu} acknowledge the need for additional measurements of success for scientific applications, offering methods to optimize the SZ compressor for various practical constraints. Additionally, Krasowsk et al.~\cite{Krasowska} statistically analyze how the correlation structure of the data influences the compressibility of the compressor and offers methods to predict compression performance.  
 
  While recent works have provided empirical studies of ZFP and other lossy compression algorithms on real-world data sets~\cite{zchecker,forsight,Wegener,lindstromdistrib,Laney, Treib, Baker, Hammerling}, Diffenderfer \cite{errorzfp} establishes the first closed-form expression for bounds on the error introduced by ZFP. In this paper, we extend the work from Diffenderfer \cite{errorzfp} to establish the first statistical analysis of the error caused by ZFP. Using concepts from \cite{errorzfp}, we provide \teal{a closed-form expression of the ZFP error distribution and analysis of its statistical properties}. Theoretically and \teal{empirically}, we show that the error is indeed biased and propose simple modifications to the \teal{compression} algorithm to neutralize the bias. \teal{These modifications reduce the magnitude of the resulting error.}
  	
The following outlines the remaining paper: \Cref{sec:background} provides a summary of the required definitions and notations from Diffenderfer \cite{errorzfp}. \Cref{sec:algorithm} walks through the eight compression steps of ZFP, detailing operators for each step using the definitions provided in \Cref{sec:background}. \Cref{section:biastools} analyzes the expected error caused by the ZFP \teal{compression} operators and \Cref{sec:totalcompressionerror} analyzes the bias for the \teal{composite} operator.
%for the composite decompression operator \red{[change the wording of the paper that associates the bias with the decompression operator and not both the compression and decompression operator, see peters comments]}. 
\Cref{sec:results} presents two numerical tests to validate our theoretical analysis  and \Cref{sec:correction} presents {simple modifications} to nullify the existing bias.
\Cref{sec:distribution} further compares theoretical and observed error distributions.
Finally, \Cref{sec:conclusion} summarizes our findings and details possible future analysis.

\section{Preliminaries: Definitions, Notation, and Theorems}
\label{sec:background}

ZFP was first introduced in \cite{zfp}, but since then, it has been further modified, details of which are documented in \cite{zfp-doc} and \cite{errorzfp}. Using notation from \cite{errorzfp}, we quickly provide the notation and preliminary theorems that are necessary for this paper. For clarity, see \cite{errorzfp} for more details. 
\red{Additionally, notation is provided in \cref{table:notation} for reference.}

\begin{table}[h!]
	\caption{Notation Table} 
	\begin{center}
		\begin{adjustbox}{width=1\textwidth}
	\begin{tabular}{|c l c|}
				\hline 
				Symbol & Description & Location \\
				\hline 
				\hline
				$\cI$ & active bit set & \S \ref{sec:background}  \\
				%$\cB$ & infinite bit space & \S \ref{sec:bitvectorSpace} \\
				\teal{$\xi$} & \teal{sign bit of a signed binary representation such that $\xi \in \mathbb{B}$} & \S \ref{sec:background} \\
				$\cB^n$, $\cN^n$ & infinite binary and negabinary vector space, respectively & \S \ref{sec:background} \\
				$\cB^n_k$ & subset of $\cB^n$ and $\cN_k^n$ with finite active bit set, respectively & \S \ref{sec:background} \\
%				$\bf{0}_\cB$,$\bf{0}_\cN$  & additive identity in $\cB^n$ and $\cN^n$, respectively & \S \ref{sec:background}\\ 
%				$\bf{1}_\cB $, $\bf{1}_\cN$  & multiplicative identity in $\cB^n$ and $\cN^n$, respectively & \S \ref{sec:background}\\
				%				$\| \cdot \|_{\cB, p}$, $\| \cdot \|_{\cN, p}$  & $p$-norm with respect to $\cB^n$ and $\cN^n$, respectively & \S \ref{NormsSubsection} \\
				$f_\cB$, $f_\cB^{-1}$, $f_\cN$, $f_\cN^{-1}$ & bijective maps from $\cB \rightarrow \mathbb{R}$, $\mathbb{R} \rightarrow \cB$, $\cN \rightarrow \mathbb{R}$ and $\mathbb{R} \rightarrow \cN$, respectively & \Cref{eqn:fN}\\
				$F_\cB$,$F_\cB^{-1}$, $F_{\cN}$,$F_\cN^{-1}$  & bijective maps from $\cB^n \rightarrow \mathbb{R}^n$, $\mathbb{R}^n \rightarrow \cB^n$, $\cN^n \rightarrow \mathbb{R}^n$ and $\mathbb{R}^n \rightarrow \cN^n$, respectively & \S \ref{sec:background}\\
%								$\oplus,\ominus $ &bit vector addition/subtraction  &  \Cref{oplustimesdef}\\
				%				$\odot$ &bit vector multiplication & Lma. \ref{FieldLemma}\\
					\teal{$t_\cS$, $T_\cS$} & \teal{truncation operator on $\cA \in \{\cB, \cN\}$ and $\cA^n$, respectively}  &   \cref{DefCOperators} \\
								\teal{$\ell := e_{max}(F_\cB^{-1}(\bx))- q+1 $ }& \teal{offset of the maximum exponent of $\bx$ in its block-floating-point representation and the precision $q$} & \cref{DefCOperators} \\
				\teal{$s_{\ell}$, $S_{\ell}$} & \teal{shift operator on $\cA \in \{\cB, \cN\}$ and $\cA^n$, respectively} & \cref{DefCOperators} \\
				$e_{min}$, $e_{max}$ & \teal{minimum and maximum exponents, applicable to both blocks and scalars} &   \cref{def:emaxemin}\\
				\hline
				$d$ & dimension of the input data & \S \ref{Step1Sec} \\
				$k$ & the number of IEEE mantissa bits, including the leading one-bit& \S \ref{Step2Sec}\\
				$q$ & the number of consecutive bits used to represent an element in the block-floating point {representation} &  \S \ref{Step2Sec} \\
				$\beta$ & number of bit planes kept in Step 8 &  \S  \ref{Step8Sec}  \\
				{$\round $} & rounding operator for two's complement representation & \S \ref{sec:step3} \\
				%				$\epsilon_m$ & constant $\epsilon_m := 2^{1-m}$, with $m \in \mathbb{N}$ &  \S \ref{sec:bitvectorSpace} \\
				$L$, $L_d$ & one and $d$-dimension forward decorrelating linear transform &   \S \ref{sec:step3} \\
				$\tilde{L}$, $\tilde{L}_d$ & {integer} arithmetic approximation of $L$ and $L_d$ &  \S \ref{sec:step3}  \\
				$L_d^{-1} $,  $\tilde{L}^{-1} _d$ &  $d$-dimension backward decorrelating linear transform and the {integer} arithmetic approximation of $L^{-1}$&  \S \ref{sec:step3} \\
				$C_{\teal{i}}$, $\tilde{C_{\teal{i}}}$ & lossless/lossy operator for Step $\teal{i}$ of ZFP compression & \S \ref{sec:algorithm} \\
				$D_{\teal{i}}$, $\tilde{D_{\teal{i}}}$ & lossless/lossy operator for Step $\teal{i}$ of ZFP decompression & \S \ref{sec:algorithm} \\
				\hline
				\teal{$p$} & \teal{a general precision; usually $p \in \{k,q\}$ depending on the compression step }&  \S \ref{section:biastools} \\
				${\mathbb{E}[\Gamma]}$ & expected value  \teal{associated with the} distribution $\Gamma$ & \S\ref{sec:truncation}\\
				$\eta$ & starting index of the discarded bits from a truncation operator & \S \ref{sec:truncation}\\
				$\forward_1$ & error distribution from the \teal{$1$-dimensional} forward lossy transform operator & \cref{lemma:expectedtransform1d} \\
				$\forward_{\teal{d}}$ & error distribution from the \teal{$d$-dimensional} forward lossy transform operator& \S \ref{sec:lossytransform}\\
				${\bf E}_{\teal{d}} $ & {the expected value \teal{associated with} $\forward_{\teal{d}}$} & \Cref{eqn:ed} \\ 
				\hline
				\teal{$\Delta$} & \teal{quantization step introduced by truncating a negabinary number} & \S \ref{sec:totalcompressionerror}\\
				\hline
				\teal{$\Psi_{\Delta}$} & \teal{Bias correction operator for a negabinary vector} & \S \ref{sec:correction} \\
				\hline
			\end{tabular}
		\end{adjustbox}
	\end{center}
	\label{table:notation}
\end{table}

First, we define the necessary vector spaces used in the analysis. The \emph{infinite bit vector space} was introduced in \cite{errorzfp} to express each step of the ZFP compression algorithm as an operator on the binary or negabinary  \cite{Knuth}  representations. The negabinary representation utilizes negative two as base such that positive and negative numbers are represented without a designated sign bit.  Each element in the vector space is an infinite sequence of zeros and ones that is restricted, such that each real number has a unique representation; see Section 3.1 in \cite{errorzfp} for specific details. 
Accordingly, let $\mathbb{B} = \{ 0, 1 \}$ and define  
\blue{$
\mathcal{C} := \left\{ \{ c_i \}_{i \in \mathbb{Z}} : c_i \in \mathbb{B} \ \text{for all} \ i \in \mathbb{Z} \right\} \label{eqn:infinitebitvector}.
$}
For $c \in \mathcal{C}$, we define the \emph{active bit set of $c$} by $\mathcal{I} (c) := \{ i \in \mathbb{Z} : c_i = 1 \}.$ 
%Now define
%$0_{\mathcal{C}}, 1_{\mathcal{C}} \in \mathcal{C}$ to be the elements
%satisfying $\mathcal{I} \left( 0_{\mathcal{C}} \right) = \emptyset$ and
%$\mathcal{I} \left( 1_{\mathcal{C}} \right) = \{ 0 \}$.
 Given $x \in \mathbb{R}$, there exist $c, d \in \mathcal{C}$ and $\teal{\xi} \in \mathbb{B}$ such that $x$ can be represented in signed binary and negabinary as 
\blue{\begin{align*}
{\text{Signed Binary: }} x = (-1)^{\teal{\xi}} \sum_{i \in \mathbb{Z}} c_i 2^i \ \ \ \text{ and}  \ \ \ {\text{ Negabinary: }} x = \sum_{i \in \mathbb{Z}} d_i (-2)^i. \label{CrepX}
\end{align*}}%
%\begin{align*}
%	{\text{Signed Binary: }} x = (-1)^p \sum_{i = - \infty}^{\infty} c_i 2^i \ \ \ \text{ and}  \ \ \ {\text{ Negabinary: }} x = \sum_{i = - \infty}^{\infty} d_i (-2)^i. \label{CrepX}
%\end{align*}
\noindent
The infinite bit vector spaces for signed binary and negabinary representations, denoted by $\mathcal{B}$ and $\mathcal{N}$, are formed by placing certain restrictions to ensure uniqueness on the choice of $c$, $d$, and $\teal{\xi}$. 
In particular, we define $\mathcal{N}\subset \mathcal{C}$ and $\mathcal{B} \subset \{ (\teal{\xi}, a) \in  \mathbb{B} \times \mathcal{C} \}$, where $\teal{\xi}$ represents the sign bit and $a\in \mathcal{C}$ represents the unsigned infinite bit vector. 
\blue{The signed binary representation uses a sign-magnitude approach, where the sign of the number is determined by the sign bit $\teal{\xi}$. 
This is similar to the representation used in IEEE 754 floating-point numbers, where the sign bit indicates the sign of the number, and the magnitude is represented separately.}
\teal{See \cite{errorzfp} for details on the uniqueness of the infinite bit vector spaces.} 

To imitate floating-point representations, \cite{errorzfp} defines subspaces $\cB_k$ and $\cN_k$ of $\mathcal{B}$ and $\mathcal{N}$, where $k$ represents the maximum number of consecutive nonzero bits allotted for each representation, excluding the sign bit in the signed binary representation. 
\blue{We define each element $c \in \cB_k$ or $c \in \cN_k$ to have the range of the active bit such that $( \max{\cI(c)} - \min{\cI(c)}+1 )\leq k$.}
%The constant $k$ can be seen as the precision of the infinite-bit vector spaces with respect to the respective floating-point representation, implying $\max \cI(c) - \min \cI(c) +1 \leq k$. For IEEE single and double precision $k \in \{24,53\}$. 
%For the full definition and underlying concepts of the infinite bit vector space, see Section 3 in \cite{errorzfp}. 
Note that there exist invertible maps that map the infinite bit vector spaces to the reals defined as $f_\cB : \mathcal{B} \to
\mathbb{R}$ and $f_\cN : \mathcal{N} \to \mathbb{R}$ by  
%\begin{equation}
%%\label{eqn:f2}
%f_\cB (b) = (-1)^{{p}} \sum_{i = -\infty}^{\infty} a_i 2^i 
%\ \
%\text{and} 
%\ \
%\label{eqn:fN}
%f_\cN (d) = \sum_{i = -\infty}^{\infty} d_i (-2)^i, 
%\end{equation}
\blue{\begin{equation}
	%\label{eqn:f2}
	f_\cB (b) = (-1)^{\teal{\xi}} \sum_{i \in \mathbb{Z}} a_i 2^i 
	\ \
	\text{and} 
	\ \
	\label{eqn:fN}
	f_\cN (d) = \sum_{i \in \mathbb{Z}} d_i (-2)^i, 
\end{equation}}%
for all $b = (\teal{\xi}, a) \in \mathcal{B}$ and $d \in \mathcal{N}$, respectively.  
\teal{Additionally, throughout the paper, we will drop the sign bit $\xi$ when referring to  $\cB$, with the understanding that the sign bit is implied.}
By our choice of $\mathcal{B}$ and $\mathcal{N}$, $f_\cB$ and $f_\cN$ are bijections and with inverses denoted by $f_\cB^{-1} : \mathbb{R} \to \mathcal{B}$ and $f_\cN^{-1} : \mathbb{R} \to \mathcal{N}$, respectively. 
\red{The operators $\oplus : \mathcal{A} \times \mathcal{A} \to \mathcal{A}$ and $\ominus : \mathcal{A} \times \mathcal{A} \to \mathcal{A}$, are defined by
\begin{align}
\alpha \oplus  \beta = f_\cA^{-1} \left( f_\cA( \alpha ) + f_\cA(\beta) \right) \ \ \ \ \ \ \text{and} \ \ \ \ \ \ \alpha \ominus \beta = f_\cA^{-1} \left( f_\cA(\alpha) -  f_\cA(\beta) \right) \label{oplustimesdef}
\end{align}
for all $\alpha, \beta \in \mathcal{A}$ for $\cA \in \{\cB, \cN\}$. To simplify the notation, we will use $+$ and $-$ when referring to addition and subtraction in the infinite bit vector space.}
%Note that $(\mathcal{B}, \oplus_\mathcal{B}, \odot_\mathcal{B})$ and $(\mathcal{N}, \oplus_\mathcal{N}, \odot_\mathcal{N})$ are fields with additive and multiplicative identities $0_{\mathcal{B}} := (0,0_{\mathcal{C}})$ and $1_{\mathcal{B}} := (0, 1_{\mathcal{C}})$ and $0_{\mathcal{N}} := 0_{\mathcal{C}}$ and $1_{\mathcal{N}} := 1_{\mathcal{C}}$, respectively. 

The maps $f_\cB$ and $f_\cN$ can be generalized to vector-valued functions by defining $F_\cB : \mathcal{B}^n \to \mathbb{R}^n$ and $F_\cN : \mathcal{N}^n \to \mathbb{R}^n$ as $F_\cB(\bm{c}) = [f_\cB({\bc}_1), \ldots, f_\cB({\bc}_n)]^t$ and $F_\cN(\bm{d}) =[\ f_\cN({\bd}_1), \cdots, f_\cN({\bd}_n)]^t$, where $\bc \in \cB^n$ and $\bd \in \cN^n$, respectively. Note that $F_\cB$ and $F_\cN$ are invertible with inverses $F_\cB^{-1}$ and $F_\cN^{-1}$.

Additionally, we define truncation and shift operators on $\mathcal{A} \in \{\mathcal{B}, \mathcal{N}\}$ that are necessary for the analysis to imitate floating-point representations with a finite number of nonzero bits.
\begin{definition}\label{DefCOperators}
	Let $\mathcal{S} \subseteq \mathbb{Z}$. The \emph{truncation operator},
	$t_{\mathcal{S}} : \mathcal{A} \to \mathcal{A}$,  is defined by  
	\begin{align*}
	t_{\mathcal{S}} (c)_i = \left\{ \begin{array}{ccc} 
	c_i &: & i \in \mathcal{S} \\
	0 &: & i \not\in \mathcal{S} \\
	\end{array} \right., \ \ \ \text{for all} \ c \in \mathcal{A} \ \text{and all} \ i \in \mathbb{Z}.
	\end{align*}
	Let $\ell \in \mathbb{Z}$. The \emph{shift operator}, $s_{\ell} :
	\mathcal{A} \to \mathcal{A}$, is defined by 
	$
	s_{\ell} (c)_i = c_{i + \ell}, \ \text{for all} \ c \in \teal{\mathcal{A}} \ \text{and all} \ i \in \mathbb{Z}.
$
\end{definition}
\noindent From these definitions, it follows that $t_{\mathcal{S}}$ is a
nonlinear operator and $s_{\ell}$ is a linear operator.
\teal{Note that when $\ell$ is positive, the shift operator results in a right shift, and a left shift if $\ell$ is negative.}
 These operators
can be extended to operators on $\mathcal{A}^n$ 
by defining
$T_{\mathcal{S}} : \mathcal{A}^n \to \mathcal{A}^n$ and $S_{\ell} : \mathcal{A}^n \to \mathcal{A}^n$ by applying the respective operator componentwise. Then we can define the maximum(minimum) exponent as the maximum(minimum) nonzero index of the respective infinite bit vector space. 
\begin{definition}
	\label{def:emaxemin}
	Let $\bm{x} \in \mathbb{R}^n$. The \emph{maximum exponent of $\bm{x} $, such that $\bm{x} \neq\bfz$,
		with respect to $\mathcal{A}\in \{\mathcal{B}, \mathcal{N}\}$} is 
	\begin{align*}
	e_{max, \mathcal{A}}(\bm{x}) = \max_{1 \leq i \leq n} \max_{j} \left\{ j
	\in \mathcal{I} \left( f_{\mathcal{A}}^{-1} \left(\bm{x}_i \right)
	\right) \right\} \label{eqn:emax}, 
	\end{align*}
	%%% Commented out old definition: e_{max}(\bx) =  \lfloor \log_2(\max_i{|x_i|})\rfloor
	and \emph{minimum exponent of $\bm{x}$ with respect to $\mathcal{A}\in \{\mathcal{B}, \mathcal{N}\}$} is
		\begin{align*}
	e_{min, \mathcal{A}}(\bm{x}) = \min_{1 \leq i \leq n} \min_{j} \left\{ j \in \mathcal{I} \left( f_{\mathcal{A}}^{-1} \left(\bm{x}_i \right) \right) \right\}. \label{eqn:emin}
	\end{align*}
	\end{definition}
When it is clear from context which space, $\cB$ or $\cN$, the vector $\bm{x}$ will be represented in, we will simply write $e_{max}$ or  $e_{min}$. 
%The prior definition can also be applied to a single real number, simplifying the definition, 
%	\begin{align*}
%		e_{max, \mathcal{A}}({x}) = \max \; \cI \left( f_{\mathcal{A}}^{-1} \left({x} \right)  \right).
%	\end{align*}
	Using the tools defined above, we will now describe each of the eight (de)compression steps and define the corresponding operator as given by \cite{errorzfp} to accurately describe the compression error.

\section{ZFP: The Algorithm} 
\label{sec:algorithm}
Our approach to analyze the bias is to utilize operators for each step of the algorithm to determine the expected value of the error caused by ZFP \blue{at each grid point}. Though the compression operator is the source of the compression error, it is the decompression operator that maps the compressed representation, and thus the error, back to the original space. The error is acceptable for many data analysis tasks as long as the error is \teal{bounded and} centered around zero, and we will show that in the current form, the ZFP decompression operator results in the expected value of the error of the transform coefficients to be biased and provide modifications in \Cref{sec:correction} to mitigate the bias. 
%\red{[This is where we want to further address Peter's comment about whether the compression or decompression steps are to blame and why it is import to define the distinction.]} 

ZFP is comprised of eight (de)compression steps. 
We outline the ZFP compression algorithm as documented in \cite{zfp-doc} and define a lossless and lossy operator determined by \cite{errorzfp} for each step. 
%See \Cref{apd:notationtable} for a reference table of the notation. 
Our discussion focuses on Steps 2, 3, and 8, as these steps are the only sources of error. 
The magnitude of the error caused by Steps 2 and 3 can be shown to be of the order of machine precision. 
While Step 8 is the main source of error, the error is mapped back to the original space through a combination of Steps 5 and 3, \teal{resulting in a spatially dependent error pattern that causes errors to be autocorrelated.}
Once we have defined the operators, we discuss the expected value of the error caused by each step.  
However, as the error caused by each step is dependent on the previous steps, we attempt to compose the operators and the resulting error to estimate the expected value of the composed error accurately.

\subsection{Step 1} 
\label{Step1Sec}
\blue{The first step of ZFP takes a \(d\)-dimensional array and partitions it into smaller arrays of \(4^d\) elements each, called \emph{blocks}.}
{This idea was mainly derived from the motivation for random access; however, similar to the compression of 2-d image data techniques, data that tends to be smooth within a block should be relatively easy to compress.}  
%A 2-d example is depicted in \cref{fig:cut4dblock}. 
If the $d$-dimensional array cannot be partitioned exactly into blocks, then the boundary of the array is padded until an exact partition is possible. Following this initial partitioning step, the remaining steps are performed on each block independently. 
%Other common compression schemes are global in nature, i.e., they use global statistics or measures to reduce the memory footprint\cite{sz}. 
Note that as Step 1 is lossless, it is omitted from the remaining analysis. 
%\begin{figure}
%	\centering
%	\includegraphics[width=.45\textwidth]{}
%	\caption{Deconstruction of a $10 \times 10$ 2-dimensional array into independent $4\times4$ blocks. If the data is not divisible by 4, the data at the boundaries is padded (shown in orange).}
%	\label{fig:cut4dblock}
%\end{figure}

\subsection{Step 2}
\label{Step2Sec}
Step 2 takes the floating-point values from each block, denoted as $\bx \in \cB_k^{4^d}$, and converts them into a block floating-point representation \cite{MitrablockRoundingError} using a common exponent. 
\blue{Each block, consisting of $4^d$ values, uses the maximum exponent from within the block as the common exponent.}
%, depicted in \cref{fig:bfprepresentation}. 
Each value is then shifted and rounded to a two's complement signed integer. 
Let $q$ denote the number of bits used to represent the significand bits for block floating-point representation; this means that the integer significand of each element in the block lies within the interval $[ 1-2^q,2^q-1]$. 
%When compressing IEEE single and double precision to a block floating-point representation $q \in \{30, 62\}$ (see \cite{errorzfp}, Section 4.2).  
An error can occur in Step 2 if the exponent range within the block is greater than what can be accommodated {by the significand of the block floating-point representation}. The truncation operator, defined by \cref{DefCOperators}, is used to truncate the least significant bits. A lossless operator, in which the bits are not truncated, will also be defined. The lossy and lossless operators for Step 2 are then defined by 
the maps $\tilde{C}_2, C_2:\mathbb{R}^{4^d} \rightarrow \cB^{4^d}$, respectively, where 
$
\tilde{C}_2 (\bm{x}) := T_{\mathcal{S}} \: S_{\ell} \: F_{\cB}^{-1}(\bm{x}) \text{ and }  C_2(\bx ) := S_{\ell}\: F_{\cB}^{-1} (\bx), 
\text{ for all } \bm{x} \in \mathbb{R}^{4^d}, 
$
where $\mathcal{S} = \{ i \in \mathbb{Z} : i \geq 0\}$ for \cref{DefCOperators} and  $\ell = e_{max}(F_\cB^{-1}(\bx))- q+1$. Note that the lossless
operator, $C_2$, is defined by removing all noninvertible maps, i.e., the truncation operator \teal{$T_{\mathcal{S}}$}.
The decompression operator for Step 2 converts the block floating-point representation back to its original floating-point representation that is representable in $\cB_k$ for $k \in \mathbb{N}$. In IEEE, the $q\in \{30,62\}$ consecutive bits for the block floating-point representation must be converted back to $k \in \{24,53\}$ bits with its respective exponent information. The lossy and lossless decompression operators for Step 2 are then defined by the maps $\tilde{D}_2, D_2: {\cB}^{4^d}\rightarrow \mathbb{R}^{4^d}$, where 
$
	\tilde{D}_2(\ba) := F_{\cB} \: S_{-\ell} \: \mathrm{fl}_k (\ba) \ \text{and} \ D_2(\ba) := F_{\cB} \: S_{-\ell}(\ba), \ \text{for all} \ \bm{a} \in	\mathcal{{\cB}}^{4^d},
$
	where $\mathrm{fl}_k (\ba)_i  = t_{\cR_{ik}} (\ba_i)$ with $\cR_{ik} = \{ j \in \mathbb{Z}: j > e_{max, \cB} (\bm{a}_i) - k \}$, for all $1 \leq i \leq 4^d$. The $\mathrm{fl}_k(\cdot)$ operator converts each component of $\bm{a}$ to a floating-point representation with $k$ bits to represent the significand. Note that the $\mathrm{fl}_k(\cdot)$ operator depends on the IEEE rounding mode.
%\begin{figure}
%	\centering
%	\includegraphics[width=.8\textwidth]{}
%	\caption{Floating-point bit representation in single precision converted to a block floating-point representation and its corresponding signed integers. Note that, depending on the relative disparity of the 16 numbers, some truncation may occur for the numbers of the smallest magnitude. }
%	\label{fig:bfprepresentation}
%\end{figure}
\teal{Also,} for certain choices of $m\in \mathbb{Z}$, the constant term $2^{1-m}$ will regularly occur in the discussion of this paper. Hence, we will let $\epsilon_m := 2^{1-m}$ \red{for} any $m \in \mathbb{Z}$. For
example, machine epsilon \cite{higham2002accuracy} is defined as $\epsilon_k = 2^{1-k}$ for precision $k$.
Assuming \red{the} decompression operator is lossless,\footnote{{As shown in \cite{errorzfp} the decompression operator can result in an additional error if the index of the leading bit of the error term from Step 8 is less than $q-k \in \{6,9\}$, for single and double precision. 
It should be noted that typical uses of ZFP will result in a lossless decompression step.}} it can be shown using Prop. 4.1 from \cite{errorzfp} that the relative error caused by this step is bounded by machine precision
$
	\|D_2\tilde{C_2}(\bx) - D_2C_2(\bx)\|_\infty \leq \epsilon_q \|\bx\|_\infty  \leq \epsilon_k \|\bx\|_\infty,
	$ as $q \geq k$.
%	Additionally, the relative error that occurs via the decompression operator is bounded by machine precision
%	\begin{align*}
%		\|\tilde{D}_2C_2(\bx) - D_2C_2(\bx)\|_\infty \leq \epsilon_k \|\bx\|_\infty,
%	\end{align*}
%	 see Prop. 4.1 from \cite{errorzfp}. Thus, the maximum relative error from both the compression and decomression operator 
% \begin{align*}
% \|\tilde{D}_2\tilde{C_2}(\bx) - D_2C_2(\bx)\|_\infty \leq \epsilon_q \|\bx\|_\infty  \leq \epsilon_k \|\bx\|_\infty,
% \end{align*}
  
\subsection{Step 3}
\label{sec:step3}
The integers from Step 2 are then decorrelated using a custom, high-speed, near orthogonal transform, $L\in \mathbb{R}^{4 \times 4}$, that is similar to the discrete cosine transform. In $d$-dimensions, the transform operator is applied to each dimension separately and can be
represented as a Kronecker product. 
%For $A \in \mathbb{R}^{n_1,m_1}$ and
%$B \in \mathbb{R}^{n_2,m_2}$, the Kronecker product is defined as  
%\begin{align*}
%A\otimes B =\begin{bmatrix}a_{1,1}B &\cdots &  a_{1,m_1}B \\  \vdots & \ddots& \vdots  \\ 
%a_{n_1,1}B  & \cdots & a_{n_1,m_1}B \end{bmatrix}. 
%\end{align*}
Then, the total forward transform operator for ZFP is defined as 
$
\cL_d = \underbrace{\cL\otimes \cL \otimes \cdots \otimes \cL}_\text{$(d - 1)$-products},
$
where $\cL$ and $L^{-1}$ are defined by
\begin{align*}
\cL = \frac{1}{16} \begin{bmatrix}
\begin{array}{rrrr}
4 & 4 & 4 & 4 \\
5 & 1 & -1 & -5 \\
-4 & 4 & 4 & -4 \\
-2 & 6 & -6 & 2
\end{array}
\end{bmatrix} \quad \text{and} \quad \cL^{-1}= \frac{1}{4} \begin{bmatrix}
\begin{array}{rrrr}
4 & 6 & -4 & -1 \\
4 & 2 & 4 & 5 \\
4 & -2 & 4 & -5 \\
4 & -6 & -4 & 1
\end{array}
\end{bmatrix}. \label{eqn:T}
\end{align*}
Note that $\mathcal{B}_q$ is not closed under addition and multiplication, and therefore, operations within this space may result in round-off error. We define $\tilde{L}_d$ as the lossy operator used in the ZFP implementation. The lossy and lossless compression operator for Step 3 are then defined as $\tilde{C}_3, C_3:
\cB^{4^d} \rightarrow \cB^{4^d}$, where
$\label{c3}
\tilde{C}_3(\ba) = F_\cB^{-1} \: \tilde{\cL}_d \: F_\cB(\ba) \  \text{ and } \ C_3(\ba) = F_\cB^{-1} \: \cL_d \: F_\cB(\ba), \text{ for all } \bm{a} \in \mathcal{B}^{4^d}.
$
Similarly, the lossless and lossy 
decompression operators are defined by $\tilde{D}_3$, $D_3: \cB^{4^d} \rightarrow \cB^{4^d}$, where 
$  
\tilde{D}_3(\ba) = F_\cB^{-1} \: \tilde{\cL}_d^{-1} \: F_\cB(\ba) \ \text{and} \ D_3(\ba) = F_\cB^{-1} \: \cL_d^{-1} \: F_\cB(\ba), \text{ for all } \bm{a} \in \mathcal{B}^{4^d},  
$
where $\tilde{\cL}_d^{-1}$ is an approximation of $\cL_d^{-1}$. To define the exact lossy operators, we first define a rounding operator $\round(\cdot)$ that rounds a right bit shift toward negative infinity as $\round : \mathcal{B} \to \mathcal{B}$ by  
	\begin{align}\label{eq:rightbitround}
	\round (a) := \begin{cases}
	t_\cS \: s_{1}(a) & :  f_{\mathcal{B}} (a) \geq 0, \\
	t_\cS \: s_{1} \left( a - 1_\cB \right) & : f_{\mathcal{B}} (a) < 0
	\end{cases},
	\end{align}
with $\mathcal{S} = \{ i \in \mathbb{Z} : i \geq 0\}$.  Note that the rounding operator applies only to infinite bit vectors that represent an integer, rounding a division by two to another respective integer. The exact lossy operators, denoted $\tilde{\cL}$ and $\tilde{\cL}^{-1}$, used in ZFP are then outlined in
\cref{table:actionTlossy}.
See Section 4.5 in \cite{errorzfp} for details.{As in \cite{errorzfp}, we assume that \red{the} backward transform operator is lossless},\footnote{{If we assume that at least $2d$ bit planes are discarded in Step 8, the resulting backward transform operator results in a linear operator and we can assume $\tilde{D}_3 = D_3$. Note that the first two steps of the backwards transform operator may result in round-off error; however, if the remaining compression steps remain lossless then each step of the backward transform, in bit arithmetic, undoes the associated step of the forward transform. Additionally, if at least $2d$ bit planes are discarded at Step 8, then the first two steps of the backwards transform do not introduce additional error. If between $1$ and $2d - 1$ bit planes are discarded, additional error may occur in the decompression step. However, this is an uninteresting case as ZFP results in a low compression ratio if only between $1$ and $2d-1$ bit planes are discarded. Thus, the remainder of the paper assumes at least $2d$ bit planes are discarded, simplifying the analysis. See \cite{errorzfp} Appendix B for details.}} i.e., $ \tilde{D}_3 = D_3$. 
 \teal{The backward transform operator, as denoted in Table 2, is considered reversible in the ZFP implementation due to both the guard bit, which prevents overflow from the left bit shifts, and the rounding operations \( \tilde{s}(\ba_i) \), which are the exact reversals of the last two steps in the forward transform operator. However, reversibility does not necessarily imply that the potentially lossy steps provided in  Table 2 are mathematically equivalent to the linear lossless operator.
 	To ensure linearity, the coefficients must satisfy the precondition of being integer multiples of four. This ensures that the rounding operations $ \tilde{s}(\ba_i) $ and the backward transform operator are mathematically equivalent to the linear lossless operator. Zeroing at least \(2d\) bit-planes in  Step~8 (see below) is one way to enforce this precondition ensuring the mathematical equivalence of \( \tilde{D}_3 = D_3\).}
 
\begin{table}[h]
	\begin{adjustbox}{width=\textwidth} 
		\begin{tabular}{| L  L   L || L   L  L | }
			\hline
			\multicolumn{3}{| c ||}{$\tilde{\cL}$} & \multicolumn{3}{ c |}{$\tilde{\cL}^{-1}$} \\ \hline 
			\ba_1 \leftarrow \ba_1+ \ba_4& \ba_1 \leftarrow (\ba_1) &\ba_4 \leftarrow \ba_4 - \ba_1 &\ba_2 \leftarrow \ba_2 +  \teal{\tilde{s}}(\ba_4)&\ba_4 \leftarrow \ba_4-  \teal{\tilde{s}}(\ba_2) & \\
			\ba_3 \leftarrow \ba_3+\ba_2 & \ba_3 \leftarrow  \teal{\tilde{s}}(\ba_3)&\ba_2 \leftarrow \ba_2- \ba_3 &\ba_2 \leftarrow \ba_2+ \ba_4 &\ba_4 \leftarrow s_{-1} (\ba_4) &\ba_4 \leftarrow \ba_4- \ba_2 \\
			\ba_1 \leftarrow \ba_1+\ba_3 & \ba_1 \leftarrow  \teal{\tilde{s}}(\ba_1)&\ba_3 \leftarrow \ba_3- \ba_1 &\ba_3 \leftarrow \ba_3 + \ba_1&\ba_1 \leftarrow s_{-1}(\ba_1) &\ba_1 \leftarrow \ba_1- \ba_3  \\
			\ba_4 \leftarrow \ba_4+\ba_2 & \ba_4 \leftarrow  \teal{\tilde{s}}(\ba_4) &\ba_2 \leftarrow \ba_2 - \ba_4 &\ba_2 \leftarrow \ba_2 + \ba_3 &\ba_3 \leftarrow s_{-1} (\ba_3) &\ba_3 \leftarrow \ba_3- \ba_2  \\
			\ba_4 \leftarrow \ba_4 +  \teal{\tilde{s}}(\ba_2) & \ba_2  \leftarrow \ba_2 -  \teal{\tilde{s}}(\ba_4) & &\ba_4 \leftarrow \ba_4 + \ba_1   &\ba_1 \leftarrow s_{-1} (\ba_1) & \ba_1\leftarrow \ba_1 - \ba_4 \\ \hline
		\end{tabular}
	\end{adjustbox}
	\caption{\label{table:actionTlossy} Bit arithmetic steps for the lossy implementation of ZFP's forward (left) and backward (right) transform.
	} 
\end{table}
From Lemma 4.4 in \cite{errorzfp}, we can show that the relative error caused by the compression operator from Step 3 is within \teal{a small constant multiple} of machine \teal{epsilon}, i.e., 
\[
\|D_3\tilde{C_3}(\bx) - D_3C_3(\bx)\|_\infty \leq \left(\frac{15}{4}\right)^d k_L \epsilon_q \|\bx\|_\infty,
\]
where \(k_L = \frac{7}{4}(2^d - 1)\). Depending on the values of \(k\) and \(q\), the right-hand side of the inequality bounding the error can be expressed as a constant multiple of machine epsilon, \(\epsilon_k\). For example, in double precision, with \(q = 62\) and \(k = 53\), we have
\teal{\(
\|D_3\tilde{C_3}(\bx) - D_3C_3(\bx)\|_\infty < 2\epsilon_k \|\bx\|_\infty,
\)
for \(d \leq 3\), and
\(
\|D_3\tilde{C_3}(\bx) - D_3C_3(\bx)\|_\infty < 11\epsilon_k \|\bx\|_\infty,
\)
for \(d = 4\).}

\subsection{Step 4} The coefficient magnitude from Step 3 tends to correlate {with the index of the elements}. Step 4 applies an invertible deterministic permutation on the \teal{coefficients}. 
The permutation roughly places the transform coefficients in order of decreasing magnitude, facilitating compression as the encoder in \teal{Step 7} tests groups of bits from consecutive coefficients and \teal{small coefficients have leading zeros} {in the binary representation.} 
%A 2-d example of total sequency can be seen in \cref{fig:totalsequency}.
 This step is not considered for the analysis since no error occurs as the (de)compression step only applies a permutation. 
%\begin{figure}[H]
%	\centering
%	\includegraphics[width=.5\textwidth]{}
%	\caption{Total sequency ordering for a 2-dimensional array, which groups the diagonal elements together. }
%	\label{fig:totalsequency}
%\end{figure}

\subsection{Step 5} Step 5 converts the two's complement signed integers (the standard integer representation) into their negabinary representation, first introduced in \cite{Knuth} and defined in \Cref{CrepX}, {as the negabinary representation uses leading zeros when representing small values.}   
%as the sign bit does not provide any useful information until the leading one-bit is encountered, {i.e., the transition from 0 to 1 (for positive values) or from 1 to 0 (for negative values)}. However, the first nonzero bit encountered in a negabinary representation immediately informs of the sign and magnitude. 
As we are representing values using a signed binary representation instead of a two's complement representation for our analysis, we need to convert each signed binary representation to its negabinary representation. Define the operator $C_5 : \mathcal{B}^{4^d} \to \mathcal{N}^{4^d}$ and $D_5 : \mathcal{N}^{4^d} \to \mathcal{B}^{4^d}$  by
$
C_5 (\bm{a}) := F_{\cN}^{-1} \: F_\cB (\ba),$ for all $\bm{a} \in \mathcal{B}^{4^d}$
and 
 $D_5 (\bm{a}) := F_{\cB}^{-1} \: F_\cN(\ba),$ for all $ \bm{a} \in \mathcal{N}^{4^d}.
$
{In the ZFP implementation, Step~5 is lossless.\footnote{As the ZFP implementation uses a guard bit for the two's complement representation to safeguard against overflow when applying the forward transform, Step 5 is lossless as there is a one to one mapping between the signed binary representation and the negabinary representation; see Section 4.5 in \cite{errorzfp}.} }
\subsection{Step 6}
In Step 6, the bit vectors are reordered by their bit index instead of by coefficient, {allowing the leading zeros of the negabinary representation to be grouped together for small valued coefficients.} When using the infinite bit vector space, this step corresponds to a transpose of a binary matrix and, as such, does not result in an error. Thus, for simplicity, {as this step does not result in altering the representation of any element in the block, we do not denote an operator here. }

\subsection{Step 7} In Step 7, each bit plane of $4^d$ bits are individually coded with a variable-length code that is one to one and reversible (see \cite{zfp-doc} for details). This idea exploits the property that the transform coefficients tend to have many leading zeros. As this step is lossless, it again is ignored for our analysis.

\subsection{Step 8} 
\label{Step8Sec}
The embedded coder emits one bit at a time until the stopping criterion is satisfied. Specifically, ZFP has three modes that determine the stopping criteria: either fixed rate, fixed precision, or fixed accuracy. The fixed rate mode compresses a block to a fixed number of bits, the fixed precision \teal{mode} compresses to a variable number of bits while retaining a fixed number of bit planes, and the fixed accuracy mode {encodes enough bit planes to satisfy an absolute error tolerance.} For our purposes, we investigate only the fixed precision mode. Thus, Step 8 is dependent only on one parameter, denoted $\beta \geq  0$, which represents the number of most significant bit planes to keep during Step 8, and any discarded bit plane is replaced with all-zero bits. An index set, denoted as $\mathcal{P}$, is used to define the truncation and is dependent on $\beta$. The lossy operator for Step 8 is given by $\tilde{C}_8:
\cN^{4^d} \rightarrow \cN^{4^d}$ and defined as 
$
\tilde{C}_8(\bm{d}) := T_{\mathcal{P}} (\bm{d})\text{ for all } \bm{d} \in \mathcal{N}^{4^d},
$
where $\mathcal{P} = \{ i \in \mathbb{Z} : i > q + 1 - \beta \}$, $q \in \mathbb{N}$ is the value from Step 2, and $T_\cP$ is the truncation operator with respect to set $\cP$. The lossless compression and decompression operators are then 
defined by $C_8 := I_\cN$\teal{, where $I_\cN$ denotes  the identity operator with respect to the space $\cN$}
\subsection{Defining the ZFP Compression Operator}
\label{ZFPCompOperator}
To conclude this section, we define the ZFP fixed precision compression
and decompression operators, as defined in \cite{errorzfp}.  Note $C_7$, $D_7$, and $C_8$ are here omitted from the composition, as
they were defined to be the identity operator $I_{\mathcal{N}}$. 

{\begin{definition}{(Definition 4.7 \cite{errorzfp})}
		The lossy fixed precision compression operator, $\tilde{C}: \mathbb{R}^{4^d} \to \mathcal{N}^{4^d}$, is defined by 
		\begin{align*}
		\tilde{C} (\bm{x}) = \left( \tilde{C}_{8} \circ {C}_{5} \circ {C}_{4} \circ \tilde{C}_{3} \circ \tilde{C}_{2} \right) (\bm{x}), \ \ \ \text{for all} \ \bm{x} \in \mathbb{R}^{4^d},
		\end{align*}
		where $\circ$ denotes the usual composition of operators. The lossless fixed precision compression operator, $C: \mathbb{R}^{4^d} \to \mathcal{N}^{4^d}$, is defined by 
		\begin{align*}
		C(\bm{x}) = \left(C_{5} \circ C_{4} \circ C_{3} \circ C_{2} \right) (\bm{x}), \ \ \ \text{for all} \ \bm{x} \in \mathbb{R}^{4^d}.
		\end{align*} 
		Lastly, the lossy fixed precision decompression operator, $\tilde{D}:  \mathcal{N}^{4^d} \to \mathbb{R}^{4^d}$, is defined by 
		\begin{align*}
		\tilde{D}(\bm{d}) = \left( \tilde{D}_{2} \circ {D}_{3} \circ D_{4} \circ D_{5} \right) (\bm{d}), \ \ \ \text{for all} \ \bm{d} \in \mathcal{N}^{4^d},
		\end{align*} 
		and the lossless fixed precision decompression operator ${D} : \mathcal{N}^{4^d} \to \mathbb{R}^{4^d}$ is defined by 
		\begin{align*}
		{D}(\bm{d}) = \left( D_{2} \circ {D}_{3} \circ D_{4} \circ D_{5} \right) (\bm{d}), \ \ \ \text{for all} \ \bm{d} \in \mathcal{N}^{4^d}.
		\end{align*} 
\end{definition}}

\section{Understanding  Bias in ZFP}
\label{section:biastools}
The goal of this section is to analyze the {expected value of the} error caused by each compression step, as well as their composition.  
A few assumptions are made to analyze the error statistically. First, we will assume that the bits after the leading \teal{one-bit} in both the signed binary and negabinary representation are uniformly random.
 This assumption is common in floating-point analysis \cite{higham2002accuracy}. {To validate this assumption for negabinary, we conducted an exploratory study in \Cref{apd:random}, which concludes that it is a reasonable assumption for bit-plane indices greater than three.}\footnote{{Due to the block-floating point transform in Step 2, there is a high probability that the inputs into the transformation have trailing zeros. This is due to the precision differences between the input data type and the block floating point representation, i.e., in the current implementation of ZFP we have $q>k$. The transformation propagates the zero bits through arithmetic operations. However, if the block has a small dynamic range, it is likely that not all the trailing zero bits will be operated on. Thus, the least significant bits have a high probability of being zero.}} Additionally, we will assume the input $\bx \in \mathbb{R}^{4^d}$ is representable in $\cB_{\teal{p}}^{4^d}$ for some precision $\teal{p}$.

%\begin{lemma}{(Section 2.1 \cite{higham2002accuracy})}
%	Assume Assumption \ref{assump:boundedness}, then $\bx_i \in [-2^{e_{max}}(1-2^{k}),2^{e_{max}}(1-2^{k}) ]$, and  $|\bx_i| \in [ 2^{e_{min}-1} ,2^{e_{max}}(1-2^{k})  ]$ for all $i$. 
%	\end{lemma}

From \Cref{sec:algorithm}, it can be seen that the compression algorithm is constructed by utilizing operators that act on the bit representation. Note that the only operators that introduce error are Steps 2, 3, and 8, which are comprised of either the truncation operator, $t_\cS(\cdot)$ or the lossy transform operator, $\tilde{\cL_d}(\cdot)$. Otherwise, the remaining operators either shift the index of the leading bit or change the mapping from the binary representation to the real number space,  i.e., $f_\cN(\cdot)$ or $f_\cB(\cdot)$. 
% In the following section, we discuss these operators and the bias that may occur.  

In the following, we will assume that the input distribution for each operator follows a discrete uniform distribution within the infinite bit vector space $\mathcal{C}_{\teal{p}} \teal{\in}  \{\mathcal{B}_{\teal{p}}, \mathcal{N}_{\teal{p}}\}$, with a finite precision denoted by ${\teal{p}}$. 
The location of the non-zero \teal{bit} of the infinite vector will change depending on the step of the ZFP operation. 
\teal{ \begin{definition} \label{def:dist} 
Define a discrete uniform distribution $A_{\{\cC_p,\iota\}}$ such that a random variable $a \sim A_{\{\cC_p,\iota\}}$ implies that the distribution has support $\supp(a) = \{ a : a \in \cC_p \text{ and } \cI(a) \subset \{ \iota, \iota+1, \ldots, \iota+p-1\} \}$, with precision $p$. 
\end{definition} }
Thus, if $\iota = 0$, $A_{\{\cB_3, 0 \}}$  depicts a discrete uniform distribution of integers from \teal{from negative seven to positive seven}, i.e.,  $A_{\{\cB_3, 0 \}} $ is synonymous with \teal{$ \cU(-7,7)$} in the real space. 
We will also define a vector version of \cref{def:dist}. 
\teal{\begin{definition}  \label{def:vectdist}
Define a discrete uniform vector distribution ${\bf A}^n_{\{\cC_p, \iota \}}$ such that for a vector $\ba \sim  {\bf A}^n_{\{\cC_p, \iota \}}$, each element $\ba_i$ is a random variable satisfying $\ba_i  \sim  A_{\{\cC_p, \iota \}}$ for all $i = 1, \ldots, n$. 
\end{definition} }
When dealing with a specific data set, it is possible to introduce additional assumptions regarding the input distribution. 
However, for the following analysis, the conclusion holds for symmetric distributions such as the uniform and normal distributions. 
A symmetric distribution is where the mean, median, and mode typically coincide at a single point. 
Additional relaxations of the assumptions may enable the conclusions to hold. 
However, it's important to consider certain edge cases. For example, distributions in which variable values are exclusively even integers could lead to the failure of our established findings. 
To ensure broad applicability, we will use \cref{def:dist} and  \cref{def:vectdist} in our subsequent analysis. 

\subsection{The Truncation Operator}
\label{sec:truncation}
First, we will discuss the error caused by the truncation operator, $t_{\cS}(\cdot)$. 
In the following section we will show that when the input $a \in A_{\{\cB_{\teal{p}},0\}}$  is an integer such that $\cI(a) \subset \teal{\mathbb{N}}$, the expected value of the error caused by $t_{\cS}(\cdot)$ \teal{is} centered around zero. 
However, if the leading bit is truncated by $t_{\cS}(\cdot)$, then $t_{\cS}(a) = 0_\cB$. Thus, the expected value of the error is dependent on the input distribution and the index of the leading truncated bit.  
For some vector distribution $\Gamma$, such that for \teal{a random variable} $\gamma \teal{\ \sim\ } \Gamma$ we have $\gamma \in \mathbb{R}^n$, define $\mathbb{E}(\teal{\gamma}) \in \mathbb{R}^n$ as the expected value of the  distribution element-wise. 
Note that for any operator that is applied to the distribution  $\Gamma$, the operator is applied element-wise, i.e., let $\gamma \teal{\ \sim\ } { \Gamma }$ such that  $t_{\cS}({\gamma}) =  [t_{\cS}(\gamma_1),\dots,t_{\cS}(\gamma_n)]^t$. 
\teal{Similarly, we can define the expected value for a vector of infinite bit-vector distributions using the previously defined mappings $F_\mathcal{C}(\cdot)$ and $F_\mathcal{C}^{-1}(\cdot$), where  $\mathcal{C} \teal{\in}  \{\mathcal{B}, \mathcal{N}\}$, which map between the real space and the infinite bit vector space. When it is obvious, we omit the mappings for the sake of readability.} 

\cref{lemma:truncbinary} presents the expected value of the error caused by the truncation operator for a bounded distribution comprised of elements from $\cB_{\teal{p}}$. 
With respect to Step 2, the block floating-point representation, define $\eta = {\teal{p}} - l -1 \in  \teal{\mathbb{N}}  $ as the starting index of the bits that will be discarded when the truncation operator is applied, where $\teal{p} \in \teal{\mathbb{N}}  $ is the number of allotted bits used in the signed binary representation, i.e., $\cB_{\teal{p}}$, and $l\in  \teal{\mathbb{N}}  $ is the number of bits that are kept. In other words, we define the truncation operator $t_\cS(a)$ such that $\cS = \{i \in \mathbb{Z}:   i > \eta\}$ for all $a\in \cB_{\teal{p}}$. 
See \cref{fig:truncation} for a simple example of applying the truncation operator for a 32-bit integer. Recall $e_{max,\cB}(a)$ is the index of the leading nonzero bit and determines the magnitude of element $a$.  

\begin{figure}
	\centering
	\includegraphics[width=.5\textwidth]{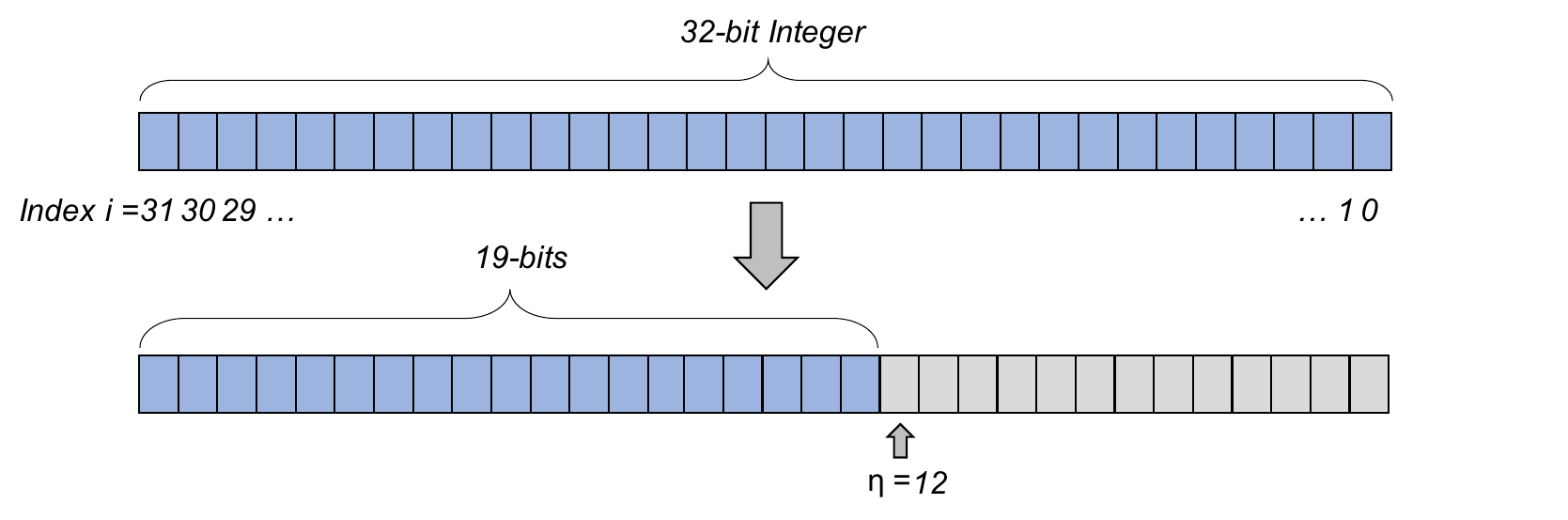}
	\caption{Applying the truncation operator $t_\cS$, where $\teal{p} = 32$, $l=19$, and $ \eta = 12$, such that $\cS =\{i \in \mathbb{Z}:   i >\eta\}.$ The truncated bits are grayed out to represent their replacement by zero bits. }
	\label{fig:truncation}
\end{figure}

\begin{lemma}\label{lemma:truncbinary} 
Assume ${\teal{p}}, l \in \teal{\mathbb{N}}$ \teal{such that $p > l$}, ensuring $\eta = {\teal{p}} - l - 1 \in \teal{\mathbb{N}}$. Define $\cS = \{i \in \mathbb{Z}: i > \eta\}$.
	Define the distribution $ A:= A_{\{\cB_{\teal{p}},0\}}$.  \teal{Let $a\teal{\ \sim\ } A$, then }
	\begin{itemize}
		\item[(i)] \teal{Assume} $e_{max,\cB}(a)>\eta $, then  $f_\cB(\teal{t_{\cS}(a)-a}) \in [1-2^{\eta+1},2^{\eta+1}-1]$ and \\$\teal{f_\cB} (\mathbb{E}[\teal{t_{\cS}(a)-a}]) = 0 $.
%		 and $Va t_{\cS}(X) - X ) = \frac{1}{3}(2^{p-q+1}) $. 
		\item[(ii)] \teal{Assume} $e_{max,\cB}(a) \leq \eta $, then $\mathbb{E}[\teal{t_{\cS}(a)-a}]= - \mathbb{E}[\teal{a}] $.
%		 and $Var( t_{\cS}(X) - X ) = Var(X)$. 
	\end{itemize}
\end{lemma}
\begin{proof}
	\begin{itemize}
		\item[(i)]	Let $e_{max,\cB}(a) >\eta $.  Observe that 
		\begin{align*}
		f_{\mathcal{B}} \left(\teal{t_{\cS}(a)-a}\right) = (-1)^{\teal{\xi}} \sum_{i \in \mathcal{I} \left( \teal{ t_{\cS}(a) \red{-}a } \right)} 2^i. 
		\end{align*}
		where $\teal{\xi} = \mathrm{sign}(a)$. 
%		 and $\mathcal{I} ( x \ominus t_{\mathcal{S}} (x))$ is the index set of the truncated least significant bits. Using the assumption that the least significant bits are uniformly random, we have 
Then
		\begin{align*}
		1-2^{\eta+1} \teal{=}  - \sum_{i =0}^{\eta} 2^i &\leq 	f_{\mathcal{B}} \left( t _{\mathcal{S}} (a) -a \right) \leq \sum_{i =0}^{\eta} 2^i   \teal{=}  2^{\eta+1}-1,
%		-2^{p-q+1}+1 &\leq f_{\mathcal{B}} \left( a- t _{\mathcal{S}} (a) \right) \leq 2^{p-q+1}-1, 
		\end{align*}
		implying the error is bounded by $[-2^{\eta+1}+1,2^{\eta+1}-1] $ and is distributed uniformly. Thus, $\teal{f_\cB} (\mathbb{E}[t _{\mathcal{S}} (\teal{a}) - \teal{a}])= 0$.
%		 and $Var( t_{\cS}(X) - X ) = \frac{1}{3}(2^{p-q+1})$. \red{[Used continuous variance, maybe used discrete instead?]}
		
		\item[(ii)] Let $e_{max,\cB}(a) \leq \eta $. Then $ t _{\mathcal{S}} (a)  =  0_\cB$ and $\teal{t_{\cS}(a)-a} =-a $, implying   \\$\mathbb{E}[\teal{t_{\cS}(a)-a} ] = - \mathbb{E}[\teal{a}] $.
%		 and $Var( t_{\cS}(X) - X ) = Var(X)$. 
		\end{itemize}
	\end{proof}

Similarly, the distribution of the error of the truncation operator is also affected by the error caused by using the negabinary representation instead of the binary representation. 
\begin{lemma}\label{lemma:truncneg}
Assume ${\teal{p}}, l \in \teal{\mathbb{N}}$ \teal{such that $p > l$}, ensuring $\eta = {\teal{p}} - l - 1 \in \teal{\mathbb{N}}$. Define $\cS = \{i \in \mathbb{Z}: i > \eta\}$.
	Define the distribution $ A:= A_{\{\cN_{\teal{p}},0\}}$.   \teal{Let $a\teal{\ \sim\ } A$, then } 
	\begin{itemize}
		\item[(i)] Assume $e_{max,\cN}(a) >\eta$. If $\eta$ is even, then $f_\cN(\teal{t_{\cS}(a)-a}) \in 2^{\eta+1} \left( -\frac{1}{3},\frac{2}{3}\right)$, such that $\teal{f_\cN} (\mathbb{E}[\teal{t_{\cS}(a)-a} ])= \frac{2^{\eta+1}}{6}$,
%		 and $Var( t_{\cS}(X) - X ) =  2^{2(p-q+1)}$,  
		 Otherwise, if $\eta$ is odd, \newline $f_\cN(\teal{t_{\cS}(a)-a}) \in 2^{\eta+1} \left( -\frac{2}{3},\frac{1}{3}\right)$, such that $\teal{f_\cN} (\mathbb{E}[\teal{t_{\cS}(a)-a}]) =-  \frac{2^{\eta+1}}{6}$.
%		 and $Var( t_{\cS}(X) - X ) =  2^{2(p-q+1)}$. \red{[Using continuous, may use discrete instead?]}
		\item[(ii)]Assume $e_{max,\cN}(a) \leq \eta $. Then $\mathbb{E}[ \teal{t_{\cS}(a)-a} ] = -  \mathbb{E}[\teal{a} ] $.
%		 and $Var( t_{\cS}(X) - X ) =Var(x)$
	\end{itemize}
\end{lemma}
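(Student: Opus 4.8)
The plan is to mirror the structure of \cref{lemma:truncbinary}, but to carry out the computation of the error $f_\cN(t_\cS(a)-a)$ explicitly in the negabinary system, where the sign pattern of the place values $(-2)^i$ forces a nonzero mean. Write $a \in A_{\{\cN_k,0\}}$ as $a = \sum_{i\ge 0} d_i(-2)^i$ with $d_i\in\mathbb{B}$, and note that $t_\cS(a)$ zeroes out exactly the bits with index $i\le\eta$. Hence the error is the (negated) low-order tail
\begin{align*}
f_\cN\!\left(t_\cS(a)-a\right) = -\sum_{i=0}^{\eta} d_i(-2)^i.
\end{align*}
Under the uniform-bits assumption each $d_i$ is an independent $\mathrm{Bernoulli}(1/2)$, so the mean of the tail is $-\tfrac12\sum_{i=0}^{\eta}(-2)^i$. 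For part~(i), I would evaluate the geometric sum $\sum_{i=0}^{\eta}(-2)^i = \tfrac{1-(-2)^{\eta+1}}{3}$; its sign depends on the parity of $\eta+1$, i.e.\ on the parity of $\eta$. When $\eta$ is even, $(-2)^{\eta+1}=-2^{\eta+1}$ and the sum is $\tfrac{1+2^{\eta+1}}{3}$, giving expected error $-\tfrac12\cdot\tfrac{1+2^{\eta+1}}{3}$, which to leading order (and exactly in the stated asymptotic form used throughout the paper) is $\tfrac{2^{\eta+1}}{6}$ with the opposite overall sign than the claimed statement — so I would need to be careful that the lemma's sign convention matches $t_\cS(A)-A$ rather than $A-t_\cS(A)$, and reconcile accordingly; when $\eta$ is odd the sum is $\tfrac{1-2^{\eta+1}}{3}$ and the sign flips, yielding $-\tfrac{2^{\eta+1}}{6}$.

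For the range claims I would bound the tail sum over all $2^{\eta+1}$ choices of $(d_0,\dots,d_\eta)$. The extreme negative value is obtained by switching on exactly the bits $i$ with $(-2)^i>0$ (the even indices), the extreme positive by switching on the odd indices; summing the even-index place values up to $\eta$ and the odd-index place values up to $\eta$ (again via geometric series, with case split on the parity of $\eta$) produces the intervals $2^{\eta+1}(-\tfrac13,\tfrac23)$ and $2^{\eta+1}(-\tfrac23,\tfrac13)$ respectively. I would also observe that the error distribution on this finite set is the image of a product of fair coins, so its mean is exactly the midpoint-weighted value computed above — no continuous-uniformity argument is needed, just linearity of expectation over independent bits.

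Part~(ii) is immediate and parallels \cref{lemma:truncbinary}(ii): if $e_{max,\cN}(a)\le\eta$ for all $a\in A$, then every active bit of $a$ lies in $\cS^c$, so $t_\cS(a)=0_\cN$, whence $t_\cS(a)-a=-a$ and $\mathbb{E}[t_\cS(A)-A]=-\mathbb{E}[A]$. The main obstacle is bookkeeping rather than conceptual: keeping the parity case-split consistent across the mean computation and the two range bounds, and making sure the geometric-series evaluations are pinned down correctly at the endpoints (whether the interval is open or half-open, and the exact constant $\tfrac{2^{\eta+1}}{6}$ versus $\tfrac{1\pm 2^{\eta+1}}{6}$) so that the statement as written is what actually gets proved. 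A secondary subtlety is that the independence/uniformity of the discarded bits $d_0,\dots,d_\eta$ must be inherited from the assumed input distribution $A_{\{\cN_k,0\}}$ together with the global uniform-random-bits assumption stated at the start of \cref{section:biastools}; I would invoke that assumption explicitly here.
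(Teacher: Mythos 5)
Your proposal follows essentially the same route as the paper: identify the error with the truncated low-order tail, bound it by the two extremal bit patterns (all even-index bits on versus all odd-index bits on, with the geometric sums giving the $\frac{2}{3}2^{\eta+1}$ and $\frac{1}{3}2^{\eta+1}$ endpoints), and obtain the mean from the uniformity of the discarded bits; part~(ii) is identical. Your computation of the mean by linearity over independent $\mathrm{Bernoulli}(1/2)$ bits is in fact cleaner than the paper's argument, which asserts the error is ``distributed uniformly'' on the open interval and takes the midpoint; your exact value $\tfrac{1}{6}\bigl(1-(-2)^{\eta+1}\bigr)$ differs from the stated $\pm\tfrac{2^{\eta+1}}{6}$ only by the $O(1)$ term that the paper drops. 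The sign discrepancy you flag is real and is not a defect of your argument: the quantity $t_{\cS}(a)-a$ equals $-\sum_{i=0}^{\eta}d_i(-2)^i$, whereas the paper's proof silently bounds $\sum_{i\in\cI(t_{\cS}(a)\ominus a)}(-2)^i$ as if it were the tail itself, i.e.\ it computes $a-t_{\cS}(a)$ (a sanity check with $\eta=0$ gives error values $\{0,-1\}$ with mean $-\tfrac{1}{2}$, opposite in sign to the lemma's claim of $+\tfrac{1}{3}$). So rather than ``reconciling'' your sign to match the statement, you should state plainly that the lemma as written holds for $A-t_{\cS}(A)$, or equivalently that the signs in (i) must be flipped for $t_{\cS}(A)-A$; as written your proof is otherwise complete.
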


\begin{proof}
	\begin{itemize}
		\item[(i)]	Let  $e_{max,\cN(a)} >\eta$ for all $a \teal{\ \sim\ }  A$. Observe that 
	\begin{align*}
	f_{\mathcal{N}} \left(\teal{t_{\cS}(a)-a } \right) = - \left(  \sum_{i \in \mathcal{I} \left(\teal{ t_{\cS}(a) \red{-} a }\right)} (-2)^i \right) , 
	\end{align*}
	where $\mathcal{I} (\teal{t_{\cS}(a) \red{-} a})$ is the index set of the truncated least significant bits.  Depending on if $\eta$ is even or odd, the error is uniformly bounded in either  $2^{\eta+1} \left( -\frac{1}{3},\frac{2}{3}\right)$ or $2^{\eta+1} \left(-\frac{2}{3},\frac{1}{3}\right), $  respectively. To demonstrate, first assume $\eta$ is even, then observe that 
	\begin{align*}
	     \sum_{j = -\infty}^{\eta/2 } (-2)^{2j}   &\leq  - \- \left(  \sum_{i \in \mathcal{I} \left(\teal{ t_{\cS}(a) \red{-} a }\right)} (-2)^i \right )\leq  \sum_{j =  -\infty}^{\teal{\eta/2-1} } (-2)^{2j+1}  ,  \\
%	   -2 \left(  \sum_{j = 0}^{(\eta-2)/2 } 4^{j} +  \sum_{j = 1}^{\infty } \left(\frac{1}{4}\right)^{j}   \right)  &\leq \sum_{i \in \mathcal{I} \left(  \teal{ t_{\cS}(a) \red{-}a  } \right)} (-2)^i \leq  \left(  \sum_{j = 0}^{\eta/2 } 4^{j} +  \sum_{j = 1}^{\infty } \left(\frac{1}{4}\right)^{j}   \right) ,  \\
	 - \frac{1}{3} 2^{\eta +1}&\leq - \left(  \sum_{i \in \mathcal{I} \left(\teal{ t_{\cS}(a) \red{-} a }\right)} (-2)^i \right ) \leq  \frac{2}{3} 2^{\eta +1}.
%	   		\frac{2}{3} \left(1- 2^{\eta}   \right) &\leq \sum_{i \in \mathcal{I} \left(  t _{\mathcal{S}} (a) \ominus a} (a) \right)} (-2)^i \leq      		\frac{-1}{3} \left(1- 2^{\eta+2} \right).
	\end{align*}
	Similarly, if $\eta$ is odd, then 
	\begin{align*}
	-\frac{2}{3} 2^{\eta+1}  &\leq- \left(  \sum_{i \in \mathcal{I} \left(\teal{ t_{\cS}(a) \red{-} a }\right)} (-2)^i \right) \leq      		\frac{1}{3} 2^{\eta+1}.
	\end{align*}
	This phenomenon is due to the alternating sign in the negabinary representation, implying that the expected value is either $$\teal{f_\cN} (\mathbb{E}[ \teal{ t_{\cS}(a)-a }])\teal{\in} \{\frac{1}{6}, -\frac{1}{6}\}2^{\eta+1}$$ depending on whether the index $\eta$ is even or odd.
%	 while the variance will remain $Var( t_{\cS}(X) - X ) = 2^{2(p-q+1)}$.
	\item[(i)] Let $e_{max,\cN}(a) \leq \eta $, then $ t _{\mathcal{S}} (a)  =  0_\cN$. Then, we have $ \teal{t_{\cS}(a) - a = -a} $, implying $\mathbb{E}[\teal{ t_{\cS}(a)-a}] = -  \mathbb{E}[\teal{a}] $.
%	and $Var( t_{\cS}(X) - X ) = Var(X)$. 
\end{itemize}

\end{proof}	

From \cref{lemma:truncbinary} and \cref{lemma:truncneg}, assuming the leading \teal{one-bit} is not truncated, one can see that the error caused by the truncation operator on a signed binary representation results in the error distribution to be centered around zero. 
In contrast, the error distribution caused by truncation operator on a negabinary representation is biased based \teal{on the parity of the index of the first truncated bit-plane,} as the last compression step truncates bit-planes while in a negabinary representation.
Before the complete discussion of the ZFP compression error, we discuss the error caused by the lossy transform operator, $\tilde{\cL}_d$.

\subsection{Lossy Transform Operator} 
\label{sec:lossytransform}
%Let $\gamma \geq 2$. The case where $\gamma < 2$ will be discussed in a later section.  
Using \cref{table:actionTlossy}, we can write the action of $\tilde{L}_1$ as a composite operator of each step. 
Define $\ba = \teal{[ \ba_1,\ba_2, \ba_3, \ba_4 ]}^t  = F_\cB^{-1}(\bx) \in \cB^4$, such that $\cI(\teal{\ba}_i) \subset \{0,\cdots,q-1\}$ for all $i$, to be the representation of $\bx$ in $\mathcal{B}^4$. \teal{The active bit set is normalized so that the input represents an integer, ensuring it corresponds to the input for Step 3. }
Let $\tilde{\cL}_{\cB,1}$ and ${\cL}_{\cB,1}$ denote the action of $\tilde{\cL}_1$ and $\cL_1$ in the vector space $\mathcal{B}^4$, respectively. 
Then 
	\begin{align*}
	\tilde{L}_{\cB,1}(\ba) &= \begin{bmatrix}
	\round(\round(\teal{\ba}_1 +\teal{\ba}_4) +\round(\teal{\ba}_3 +\teal{\ba}_2))\\ \teal{\by}_2 -\round(\teal{\by}_1  +\teal{\by}_2) -\round(\round( \teal{\by}_1   +\teal{\by}_2) +\round(\teal{\by}_2 -\round(\teal{\by}_1   +\teal{\by}_2)) )\\ \round(\teal{\ba}_3 +\teal{\ba}_2) - \round(\round(\teal{\ba}_1 +\teal{\ba}_4) +\round(\teal{\ba}_3 +\teal{\ba}_2)) \\\round(\teal{\by}_1   +\teal{\by}_2) +\round(\teal{\by}_2 -\round(\teal{\by}_1   +\teal{\by}_2)) 
	\end{bmatrix},
	\end{align*}
where $\teal{\by}_1 = \teal{\ba}_4 - \round(\teal{\ba}_4+\teal{\ba}_1)$ and $\teal{\by}_2 = \teal{\ba}_2 - \round(\teal{\ba}_2+\teal{\ba}_3)$. 
For details on forming $\tilde{L}_{\cB,1}$, see Section 4.3 in \cite{errorzfp}. 
The operator $\cL_{\cB,1}$ is formed by replacing the rounding operator $\round(\cdot)$ with the shift operator, $s_1(\cdot)$. 
Note that an error \teal{may} occur because $\round(\cdot)$ is not bijective.
Using the composite operators of the lossless and lossy forward transform operators, the error between \teal{the} two operators is defined in the following lemma. 

\begin{lemma}\label{lemma:expectedtransform1d} 
	Define the distribution ${\bf A}:={\bf A}^4_{\{\cB_p,0\}}$ with precision $p$.
	Define the operator $\littleforward(\cdot): \cB_p \rightarrow  {\{-\frac{1}{2},0 \}}$ as the error caused by rounding towards  \teal{negative infinity through a right bit-shift, as described in Equation \cref{eq:rightbitround}} i.e., $\littleforward(\cdot):= \round(\cdot) -s_1(\cdot)$.  
	Under the assumption that the trailing least significant bits are uniformly random, we have $\theta_j(\cdot) \in \{-1/2 , 0\}$ with equal probability\footnote{
		Note that $\mathcal{I}(\teal{\ba_i}) \subseteq \teal{\mathbb{N}}$. 
		Suppose $f_\mathcal{B}(\teal{\ba_i})\geq 0$, then $\mathcal{I}(\round(\teal{\ba_i}))=\mathcal{I}(t_\mathcal{S}(s_1(\teal{\ba_i}))) = \mathcal{I}(s_1(\teal{\ba_i})) \setminus \{-1\}$, implying   $\littleforward_j(\teal{\ba_i}) \in  \{-\frac{1}{2},0 \}$ with equal probability \teal{under the assumption that the least significant bits of $\ba_i$ are uniformly random}. 
		Now suppose $f_\mathcal{B}(\teal{\ba_i})< 0$. 
		If  $f_\mathcal{B}(\teal{\ba_i})$ is even such that $f_\mathcal{B}(\teal{\ba_i}) = 2k$ then $f_\mathcal{B}(\round(\teal{\ba_i})) =k$. 
		If $f_\mathcal{B}(\teal{\ba_i})$ is odd such that $f_\mathcal{B}(\teal{\ba_i}) = 2k-1$, then $f_\mathcal{B}(\round(\teal{\ba_i})) = k-1$, implying  $\littleforward_j(\teal{\ba_i}) \in \{-\frac{1}{2},0 \} $ with equal probability. 
}. Then the error of the lossy forward transform operator \teal{associated with a random variable $\ba  \sim  {\bf A }$} is   
\begin{align*}\label{eqn:transformlossyerror}
\forward_1: = \left \{  \tilde{L}_{\cB,1}\ba - {L}_{\cB,1}\ba \in 
\begin{bmatrix}
\frac{1}{2}\left(\littleforward_1+\littleforward_2\right )  +\littleforward_4\\
\frac{1}{8}\left( 5\littleforward_1-\littleforward_2 \right) -\frac{5}{4}\littleforward_3-\frac{1}{2}\littleforward_5 -\littleforward_6\\
\frac{1}{2}\left(\littleforward_2-\littleforward_1\right) -\littleforward_4 \\ 
-\frac{1}{4}\left( \littleforward_1 +3 \littleforward_2\right) +\frac{1}{2}\littleforward_3+\littleforward_5
\end{bmatrix} \: | \;  \ba \teal{\ \sim \ }{{{\bf A}}} \right \}, 
\end{align*}
where each $\littleforward_j:=\littleforward_j(\cdot)$ is associated with \teal{a} unique operation from \cref{table:actionTlossy}. 
The expected value of the error is then 
\begin{align*}
 { {\bf E}_1 = \teal{F_\cB}(\mathbb{E}[\forward_1]}) = \begin{bmatrix} - \frac{1}{2}, \frac{9}{16}, \frac{1}{4}, - \frac{1}{8} \end{bmatrix}^t.
\end{align*}
\end{lemma}
\begin{proof}
Observe that each term involving $\round(\cdot)$ in \cref{eqn:transformlossyerror} can written in terms of $\littleforward_i$ and the shift operator $s_1$. 
For example, 
$
	\round(a_1+a_4) =  s_1\left(a_1+a_4\right) +\littleforward_i(a_1+a_4).
$
	Assuming each $\littleforward_i$ is defined by \teal{a} specific operation from \cref{table:actionTlossy}, and the fact that $ {L}_{\cB,1}$ is formed by replacing $\round(\cdot)$ with the shift operator $s_1(\cdot)$, the proof follows. 
\end{proof}
\cref{sec:demoEd} demonstrates the validity of the estimated error predicted in \cref{lemma:expectedtransform1d} with respect to the the expected value caused by the lossy transform operator.
Generalizing \cref{lemma:expectedtransform1d} to higher dimensions, \teal{$d$}, \cref{thm:expectedtransform}  presents the expected value of error $\mathbb{E}\teal{[}\forward_d\teal{]}$ for input distribution ${\bf A}:={\bf A}^d_{\{\cB_p,0\}}$, the error between $\tilde{L}_d$  and $L_d$.
 As we traverse the $x$ dimension before the $y$ dimension and so forth and the error is nonlinear, the resulting error matrix will not be symmetric, as can be seen in \cref{thm:expectedtransform}.

%\begin{theorem}
%	Define the distribution ${\bf A}:={\bf A}^{\teal{4^d}}_{\{\cB_p,0\}}$ with precision $p$.
%	Define the operator $\littleforward(\cdot): \cB_p \rightarrow \{-\frac{1}{2},0 \}$ as the error caused by rounding towards \teal{negative infinity through a right bit-shift, as described in Equation \cref{eq:rightbitround}}, i.e., $\littleforward(\cdot):= \round(\cdot) -s_1(\cdot)$. 
%	Define the operator $\teal{\mathrm{vec}}(\cdot)$ as a mapping from $d$ dimensions to a one-dimensional vector containing the flattened components with respect to column ordering. 
%	Assume $\littleforward_j(\cdot) \in  \{-\frac{1}{2},0 \}$ with equal probability for all $j$. \teal{Assume that for $a \in \cB^4_p$ such that $\cI(a) = \{i : i \geq 0\}$, the operation of the nonlinear operator $\tilde{L}$ on $a$ ensures that $\tilde{L}a \in \cB^4_p$ and $\cI(\tilde{L}a) = \{i : i \geq 0\}$.}
%	Then the expected value of the error caused by the lossy forward transform for \red{$d$ dimensions} is 
%	\begin{align}
%		 {\bf E}_d = \mathbb{E}[\forward_d] = \mathrm{vec} \left( [ {\bf E}_{d-1}, {\bf E}_{d-1}, {\bf E}_{d-1}, {\bf E}_{d-1} ]^t +\left[ \underbrace{{\bf E}_1, {\bf E}_1, \cdots, {\bf E}_1}_{4^{d-1}} \right] L_{d-1}^t  \right),
%		\end{align}
%	where $\forward_d = \{ \teal{ \tilde{L}_d\ba  - L_d\ba} \; | \; \ba \teal{ \ \sim \ } \teal{{\bf A}} \} $ and  ${\bf E}_{d-1}$ is the expected error between the lossless and lossy transform operator for \red{$d-1$ dimensions.}
%  \end{theorem}

\teal{\begin{theorem}\label{thm:expectedtransform} 
	Let ${\bf A}:={\bf A}^{4^d}_{\{\cB_p,0\}}$ be a distribution over $\cB_p$ with precision $p$.  
	Define the operator $\littleforward(\cdot): \cB_p \rightarrow \{-\frac{1}{2},0 \}$ as the error caused by rounding towards negative infinity through a right bit-shift, as described in Equation \cref{eq:rightbitround}, i.e., $\littleforward(\cdot):= \round(\cdot) -s_1(\cdot)$.  
	Assume that the rounding error $\littleforward_j(\cdot)$ is uniformly distributed in $\{-\frac{1}{2}, 0\}$ and independent for all components $j$, as established in \cref{lemma:expectedtransform1d}. 
	Under these assumptions, the expected value of the error caused by the lossy forward transform for $d$ dimensions is given by:
	\begin{align} \label{eqn:ed}
		{\bf E}_d = \mathbb{E}[\forward_d] = \mathrm{vec} \left( [ {\bf E}_{d-1}, {\bf E}_{d-1}, {\bf E}_{d-1}, {\bf E}_{d-1} ] + L_{d-1} \left[ \underbrace{{\bf E}_1, {\bf E}_1, \ldots, {\bf E}_1}_{4^{d-1}} \right]^t   \right),
	\end{align}
	where $\forward_d = \{  \tilde{L}_d\ba - L_d\ba  \; | \; \ba \sim {\bf A} \}$, and ${\bf E}_{d-1}$ is the expected error between the lossless and lossy transform operators for $d-1$ dimensions. 
\end{theorem}
\begin{proof}
The base case for this recursion, ${\bf E}_1$, is given in \cref{lemma:expectedtransform1d}.
	For \(d = 2\), let \(X = \mathrm{vec}^{-1}(\ba)\) be the inverse vectorization of \(\ba\). 
	Then, the lossless and lossy transforms are given by \(L_2\ba = \mathrm{vec}\left[L(LX^t)^t\right]\) and \(\tilde{L}_2\ba = \mathrm{vec}\left[\tilde{L}(\tilde{L}X^t)^t\right]\), respectively. Then the error is 
	\begin{align}
		\mathrm{vec}^{-1}\left[ \tilde{L}_2\ba -  L_2 \ba  \right]  &=   \tilde{L}(\tilde{L}X^t)^t - L(LX^t)^t , \\
		&=  \tilde{L}(\tilde{L}X^t)^t  - L(\tilde{L}X^t)^t  + L[(\tilde{L}X^t)- (LX^t) ]^t.
	\end{align}
	Taking the expected value of each term, the first term, \( \tilde{L}(\tilde{L}X^t)^t - L(\tilde{L}X^t)^t  \), corresponds to the error introduced by \(\tilde{L}\). Note that the nonlinear operator $\tilde{L}$ ensures that the output remains in the same space, satisfying $\tilde{L}\ba \in \cB^4_p$ for any $\ba \in \cB^4_p$ such that $\cI(\ba) = \{i : i \geq 0\}$.  By \cref{lemma:expectedtransform1d}, the rounding errors introduced by \(\tilde{L}\) are independent of the distribution of the random variables \(Y = (\tilde{L}X^t)^t\), as they are confined to uniformly random least significant bits.  Thus, 
	\begin{align}
		\mathbb{E}[   \tilde{L}(\tilde{L}X^t)^t -  L (\tilde{L}X^t)^t  ] &= \mathbb{E}[ \tilde{L}Y -  LY ]  = [E_1, E_1, E_1, E_1].
	\end{align}
	For the second term, \(L[  (\tilde{L}X^t)- (LX^t)  ]^t  \), we use the linearity of \(L\) and the vectorization operator, and we have 
	\begin{align}
		\mathbb{E}\left [ L[\tilde{L}X^t - LX^t]^t  \right] &= L \mathbb{E}[\tilde{L}X^t - LX^t]^t  = L [E_1, E_1, E_1, E_1]^t
	\end{align}
	Combining these results, the expected error for \(d = 2\) is 
	\begin{align}
		\mathbb{E}[\tilde{L}_2\ba - L_2\ba] &= \mathrm{vec} \left( [E_1, E_1, E_1, E_1] + L [E_1, E_1, E_1, E_1]^t \right).
	\end{align}
	Thus, \({\bf E}_2 = \mathrm{vec} \left( [E_1, E_1, E_1, E_1]+ L_1[E_1, E_1, E_1, E_1]^t\right)\), where \(L_1 = L\). By induction, the same reasoning applies for higher dimensions, where the error propagates recursively. Therefore, the expected error for dimension \(d\) is
	\begin{align}
		{\bf E}_d = \mathrm{vec} \left(  [ {\bf E}_{d-1}, {\bf E}_{d-1}, {\bf E}_{d-1}, {\bf E}_{d-1} ] +L_{d-1} \left[ \underbrace{{\bf E}_1, {\bf E}_1, \ldots, {\bf E}_1}_{4^{d-1}} \right]^t \right).
	\end{align}
\end{proof}}

Additionally, a similar analysis can be done for the decompression operator; however, if we assume $\beta \geq q - 2d+2$, where $q$ bits are used to represent the significand for the block-floating point representation, i.e., the integer coefficients of each element in the block, then no additional error will occur when applying the decorrelating linear transform operator (see~\cite[\S{4.3}]{errorzfp} for details).\footnote{The additional error that may occur from the decorrelating linear transform operator depends on $\beta$, the fixed-precision parameter. If $\beta \geq q - 2d+2$, where $q$ bits are used to represent the significand for the block-floating point representation, then no additional error will occur.}

Lastly, we will now discuss how the expected value is affected by the shift operator, $S_\ell(\cdot)$, a critical operator used by ZFP. 

\subsection{Shift Operator}
Even though the shift operator is lossless, it changes the magnitude of elements. Note that the shift operator, $s_\ell(\cdot)$, is linear; thus, we have the following simple lemma that describes how the expected value is shifted. 
\begin{lemma}\label{lemma:shift} 
	Define the {distribution ${ A}:={ A}_{\{\cB_p,\iota\}}$} with some precision $p$.  \teal{Let $ a \sim A$. Then}
$
	\teal{f_\cB}({\mathbb{E} [s_\ell(\teal{a})]})= 2^{-\ell}  \teal{f_\cB}( {\mathbb{E}[\teal{a} ]}).
$
\end{lemma}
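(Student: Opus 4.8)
The plan is to unwind the definitions: by \cref{DefCOperators}, the shift operator $s_\ell$ acts on an infinite bit vector by $s_\ell(c)_i = c_{i+\ell}$, so under the map $f_\cB$ it amounts to multiplication by $2^{-\ell}$ in the reals, i.e., $f_\cB(s_\ell(c)) = 2^{-\ell} f_\cB(c)$ for every $c$ on which the shift keeps us inside $\cB_p$ (which is guaranteed by the way $\iota$ is set up in \cref{def:dist}, since all active bits simply translate). First I would make this pointwise identity explicit, then observe that since the operator is applied element-wise to the distribution $A$, linearity of expectation immediately yields $\mathbb{E}[s_\ell(A)] = \mathbb{E}[2^{-\ell} A] = 2^{-\ell}\mathbb{E}[A]$.

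The key steps, in order: (1) state that for any $a \in A_{\{\cB_p,\iota\}}$, $\mathcal{I}(s_\ell(a)) = \mathcal{I}(a) - \ell$, so $s_\ell(a) \in \cB_p$ and the shifted distribution is again supported in the relevant bit-vector space; (2) compute $f_\cB(s_\ell(a)) = (-1)^p \sum_i a_{i+\ell} 2^i = (-1)^p \sum_j a_j 2^{j-\ell} = 2^{-\ell} f_\cB(a)$ via the index substitution $j = i + \ell$; (3) conclude $\mathbb{E}[s_\ell(A)] = \mathbb{E}[2^{-\ell}A] = 2^{-\ell}\mathbb{E}[A]$ by linearity. Since $s_\ell$ is flagged as linear right after \cref{DefCOperators}, one could alternatively just cite that $f_\cB \circ s_\ell = 2^{-\ell} f_\cB$ and skip straight to step (3).

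I don't expect any real obstacle here — this is a one-line consequence of the linearity of the shift operator noted just after \cref{DefCOperators} and the linearity of expectation. The only point requiring a moment's care is the bookkeeping on the index set to confirm that shifting does not push a bit vector out of $\cB_p$; but because $\iota$ in \cref{def:dist} is precisely the parameter recording where the active bits sit, the shift merely relabels $\iota$ and the precision $p$ is preserved, so there is nothing to check beyond the translation of indices.
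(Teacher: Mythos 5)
Your proposal is correct and follows essentially the same route as the paper's proof: both compute $f_\cB(s_\ell(a)) = (-1)^p \sum_{i \in \cI(a)} 2^{i-\ell} = 2^{-\ell} f_\cB(a)$ pointwise and then invoke linearity of expectation. The extra bookkeeping you flag about the index set translating by $\ell$ is harmless but not something the paper dwells on.
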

\begin{proof} Let $a \teal{\ \sim \ } A$. Then 
		\begin{align*}
				\teal{f_\cB}(	{\mathbb{E} [s_\ell(a)]} )=	{\mathbb{E} [f_\cB(s_\ell(a))]}   = 	{\mathbb{E} \left[(-1)^{\teal{\xi}}  \sum_{i \in \cI(a)}2^{i-\ell}\right]} = 2^{-\ell} 	{\mathbb{E} [f_\cB(a)]}=	2^{-\ell}\teal{f_\cB}(	{\mathbb{E} [a]}). 
		\end{align*}
	 \end{proof}
	
In the next section we look at the composite operator of the ZFP compression steps and discuss the resulting error using the tools derived in \Cref{section:biastools}.

\newcommand{\tone}{{\bm z}}
\newcommand{\ttwo}{{\bm y}}
\newcommand{\tthree}{{\bm w}}

\section{ZFP Compression Error} 
\label{sec:totalcompressionerror}
In the current framework, the ZFP \newline(de)compression operators that introduce error are inherently nonlinear; however, to analyze the expected value of the error distribution, we decompose the full ZFP operator into four terms, each representing a nonlinear error caused by the truncation operator. 
Using the tools derived in \Cref{section:biastools}, the expected value of the total error distribution can be expressed as a sum of the expected value of each nonlinear term associated with a lossy operator. 
To begin our discussion, let $\tone =  {D}_3 D_4 D_5 \tilde{C} \bx $.  Using the distributive property of linear operators, the total compression error can be decomposed  as
\begin{align*}
  	\tilde{D}(\tilde{C}(\bx)) - D(C(\bx))
	&=  \tilde{D}_{2}  {D}_{3}  D_{4}  D_{5} (\tilde{C}(\bx)) -  D(C(\bx))  , \\
%	&=   \tilde{D}_{2} \tone -  D(C(\bx)), \\
%		&=   \tilde{D}_{2} \tone + {D}_{2} \tone - {D}_{2} \tone -  D(C(\bx)), \\
	&=  \tilde{D}_{2} \tone -  {D}_{2} \tone  +  {D}_{2}{D}_3 D_4 D_5 \left(    \tilde{C} \bx - {C} \bx  \right).
\end{align*} 
Continuing in the same manner, let $\ttwo = C_5 C_4 \tilde{C}_3 \tilde{C}_2 \bx$ and $\tthree = \tilde{C}_2 \bx$. 
The total compression error is decomposed as  
\begin{align}\label{eqn:decomposed}
D&(C(\bx))-\tilde{D}(\tilde{C}(\bx)) 
=  \left(\tilde{D}_2\tone- D_2\tone\right)+ D_2 D_3 D_4 D_5 \left(  \tilde{C}_8 \ttwo- C_8\ttwo\right) \\
&+  D_2 D_3 D_4 D_5 C_5 C_4 \left(\tilde{C}_3 \tthree- C_3 \tthree \right ) +D_2 D_3 D_4 D_5 C_5 C_4C_3\left ( \tilde{C}_2 \bx - {C}_2 \bx  \right). \notag
\end{align} 
Note, only the lossy operators, $\tilde{C}_2,$ $\tilde{D}_2$, $\tilde{C}_3$ and $\tilde{C}_8$, are nonlinear.
Thus, we have four sources of error: $\tilde{C}_2 \bx - {C}_2 \bx $,  $\tilde{C}_3 \tthree -C_3 \tthree$, $\tilde{C}_8 \ttwo  - C_8\ttwo$, and $\tilde{D}_2\tone - D_2\tone$.
Each term is propagated back to a floating-point representation of the original magnitude by applying the lossless decompression operators.
Additionally, note the dependencies, i.e. $\tone$ is dependent on $\ttwo$, $\ttwo$ is dependent on $\tthree$, and $\tthree$ is dependent on $\bx$.
To understand the \teal{error} bias, we will first look at each portion independently and examine the expected value of each term.
\cref{lemma:c2} presents the expected value for the last error term in \cref{eqn:decomposed} caused by the second compression step.
\cref{lemma:c3} and \cref{lemma:c8} present the expected value of the error caused by the third and eighth compression step, respectively.
Lastly, \cref{lemma:d2} presents the expected value for the first error term in \cref{eqn:decomposed} caused by the second decompression step.
 Note that each of the following lemmas assume a non-zero block, $\bx \neq \bfz$, as it is a special case since ZFP can represent it exactly with minimal bits.
% As the terms are correlated they will inherently influence each other as discussed in Section \ref{}. 

\begin{lemma} \label{lemma:c2}

 Let $\bf{X} =F_{\cB}\left( \bf{A}^{4^d}_{\cB_k, \iota}\right)$ \teal{be a distribution such that for every realization $\bx$ of a random vector drawn from $\bf{X}$, it holds that $\bx_i \neq 0$ for all $i$} and $F^{-1}_{\cB}\left(\bx\right) \in \cB_k$, for some precision $k$. 
 \teal{Here, \( k \) represents the precision of the original input into the compressor; typically $k \in \{24,53 \}$ and $\iota_i$,  the starting index of the mantissa bits for each element $\bx_i$ in the vector, is determined by the input representation. }
 Let $0\leq \beta \leq q - 2d+2$, where $q \in \mathbb{N}$ is the precision for the block-floating-point representation such that $q \geq k$. Assume $\rho \leq q-2$, where $\rho = e_{max,\cB}({\bx})-e_{min,\cB}({\bx})+1$ is the exponent range for $\bx $ \teal{drawn from} $\bf{X}$. Then 
	\begin{align*}
	{\mathbb{E} \left[D_2 D_3 D_4 D_5 C_5 C_4C_3\left ( \tilde{C}_2 \teal{\bx} - {C}_2 \teal{\bx} \right)\right]}  =  \bfz. 
	\end{align*}
\end{lemma}

	\begin{proof}First let us look at the expected value of the nonlinear term, i.e., $ \tilde{C}_2 \teal{\bx} - {C}_2 \teal{\bx} $, 
	\begin{align*}
		{\mathbb{E}  \left[  \tilde{C}_2 \teal{\bx} - {C}_2 \teal{\bx} \right]}
		&=	{\mathbb{E} \left[   T_{\mathcal{S}} S_{\ell} F_{\cB}^{-1}(\teal{\bx} )-  S_{\ell}F_{\cB}^{-1} (\teal{\bx} ) \right]}= 	{\mathbb{E} \left[  T_{\mathcal{S}} \teal{\hat{\bx}} - \teal{\hat{\bx}} \right]}, 
	\end{align*}
	where $\teal{\hat{\bx}} = S_{\ell} F_{\cB}^{-1}({\teal{\bx} } )$ and $\cS = \mathbb{N}$. \teal{Recall that  in \Cref{sec:algorithm}, Step 2 defines $\ell = e_{max}(F_\cB^{-1}(\bx))- q+1$. Then,} $ e_{max,\cB}(\teal{F^{-1}_{\cB}(\hat{\bx} )}) = e_{max,\cB}(\teal{F^{-1}_{\cB}(\bx)} )-\ell = q- 1 \geq 0 $ and $ e_{min,\cB}(\teal{F^{-1}_{\cB}(\hat{\teal{\bx}})}) = e_{min,\cB}(\teal{F^{-1}_{\cB}( \bx)})-\ell ={-\rho}+q-1 \geq 0$. Thus, \teal{applying \cref{lemma:truncbinary} component-wise}, we have
	\begin{align}
		{\mathbb{E} [  \tilde{C}_2 {\teal{\bx}} - {C}_2 {\teal{\bx}}   ]} = F_\cB^{-1} (\bfz), \label{eqn:c2}
	\end{align}	
	\teal{as both $e_{max,\cB}(\teal{F^{-1}_{\cB}(\hat{\bx})})$ and $e_{min,\cB}(\teal{F^{-1}_{\cB}(\hat{\teal{\bx}})})$ are bounded away from the index $0$, where the truncation operator begins zeroing out entries.}
	Combining \cref{eqn:c2} with \cref{lemma:shift} and the linearity of expectation, the observation follows	
	\begin{align*}
			\mathbb{E} [D_2 D_3 D_4 D_5 C_5 C_4C_3&\left ( \tilde{C}_2 {\teal{\bx}} - {C}_2 {\teal{\bx}} \right)] \\  
			&={ \mathbb{E}\left[	F_\cB S_{-\ell} \left(D_3 D_4 D_5 C_5 C_4C_3\left(  \tilde{C}_2 {\teal{\bx}} - {C}_2 {\teal{\bx}}  \right) \right)\right]},\\
%		& = 2^{\ell} 	{\mathbb{E} \left[ L_d^{-1}F_\cB\left( D_4 D_5 C_5 C_4C_3\left(\tilde{C}_2 {\bf{X}}  - {C}_2 {\bf{X}}  \right) \right) \right]},\\
		& = 2^{\ell}L_d^{-1} {\mathbb{E}\left[ L_d F_\cB \left(\tilde{C}_2 {\teal{\bx}}  -  {C}_2 {\teal{\bx}}  \right) \right]},\\
		& = 2^{\ell} \teal{F_\cB}\left( {\mathbb{E}\left[  \tilde{C}_2 {\teal{\bx}}- {C}_2 {\teal{\bx}}   \right]} \right) = \bfz.
	\end{align*}
	
	\end{proof}
Next, \cref{lemma:c3} presents the expected value of the error caused by the third compression step, i.e., the forward transform operator.

\begin{lemma}
	\label{lemma:c3}
	Let  $\bf{W}:={\bf A}^{4^d}_{\{\cB_q,0\}}$ \teal{be a distribution such that for every realization $\bw$ of a random vector drawn from $\bf{W}$, it holds that $f_\cB(\tthree_i) \neq 0$ for all $i$}, for some precision $\teal{{q}}$. Let $0\leq \beta \leq \teal{{q}} - 2d+2$, where $q \in \mathbb{N}$ is the precision for the block-floating-point representation. Then 
\begin{align*}
	{\mathbb{E}  \left[  D_2 D_3 D_4 D_5 C_5 C_4 \left( \tilde{C}_3 {\teal{\bw}}- C_3 {\teal{\bw}} \right)\right]}  =  2^{\ell}  {\cL}_d^{-1}    {\bf E}_d. 
\end{align*}	
	\end{lemma}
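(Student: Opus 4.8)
The plan is to strip the lossless, invertible parts off the decompression chain, reduce the claim to the expected forward-transform error ${\bf E}_d$ of \cref{thm:expectedtransform}, and then pull the remaining honest linear maps through the expectation.

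\emph{Collapsing the lossless invertible operators.} The permutation $D_4$ inverts $C_4$, and $D_5 = F_{\cB}^{-1}F_{\cN}$ inverts $C_5 = F_{\cN}^{-1}F_{\cB}$ since both merely re-encode the same real number; hence $D_4 D_5 C_5 C_4 = I_{\cB}$ on $\cB^{4^d}$, so
\begin{align*}
D_2 D_3 D_4 D_5 C_5 C_4\bigl(\tilde C_3 {\bf W} \ominus C_3 {\bf W}\bigr) = D_2 D_3\bigl(\tilde C_3 {\bf W} \ominus C_3 {\bf W}\bigr).
\end{align*}
Under the hypothesis $0 \le \beta \le q - 2d + 2$, at least $2d$ bit planes are discarded in Step~8, so the backward transform is lossless ($\tilde D_3 = D_3$; see \cite{errorzfp} and the discussion following \cref{thm:expectedtransform}) and $D_3 = F_{\cB}^{-1}\cL_d^{-1}F_{\cB}$ is the exact linear inverse transform, while $D_2 = F_{\cB}S_{-\ell}$ scales real values by $2^{\ell}$ by \cref{lemma:shift}. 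Because $F_{\cB}$ carries $\ominus$ to ordinary subtraction componentwise, the composite $D_2 D_3$ acts on real values as the \emph{linear} map $\by \mapsto 2^{\ell}\cL_d^{-1}\by$, so it may be pulled out of the expectation.

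\emph{Recognizing the Step-3 error distribution.} By the definitions of $\tilde C_3$ and $C_3$, $F_{\cB}(\tilde C_3 {\bf W} \ominus C_3 {\bf W})$ is, up to an overall sign, a realization of the Step-3 transform error $\forward_d$ of \cref{thm:expectedtransform}: the input ${\bf W} = {\bf A}^{4^d}_{\{\cB_q,0\}}$ is exactly the input distribution of that theorem with $p = q$, and the standing assumption of \Cref{section:biastools} that the non-leading bits are uniformly random --- validated in \cref{apd:random} --- yields, exactly as in the footnote of \cref{lemma:expectedtransform1d}, that every right-shift rounding error $\littleforward_j$ is uniform on $\{-\tfrac{1}{2}, 0\}$. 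Since $\forward_d$ is a fixed linear combination of the $\littleforward_j$ (see \cref{eqn:transformlossyerror} and the recursion in \cref{thm:expectedtransform}), only the marginal means $\mathbb{E}[\littleforward_j] = -\tfrac{1}{4}$ are needed, and $\mathbb{E}[\forward_d] = {\bf E}_d$. Linearity of expectation then gives
\begin{align*}
\mathbb{E}\bigl[D_2 D_3 D_4 D_5 C_5 C_4(\tilde C_3 {\bf W} \ominus C_3 {\bf W})\bigr] = 2^{\ell}\cL_d^{-1}\,\mathbb{E}\bigl[F_{\cB}(\tilde C_3 {\bf W} \ominus C_3 {\bf W})\bigr] = 2^{\ell}\cL_d^{-1}{\bf E}_d .
\end{align*}

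\emph{Anticipated obstacle.} The delicate part is the bookkeeping, not the computation: one must verify that the lossless decompression chain \emph{commutes with the difference} $\ominus$, i.e.\ that ``decompress, then subtract'' equals ``subtract, then decompress,'' which relies on $\tilde D_3 = D_3$ (hence on $\beta \le q - 2d + 2$) and on each of $C_4, C_5, D_5, D_3, D_2$ acting as a genuine field/linear operation through $F_{\cB}$ and $F_{\cN}$; and one must carry the sign convention for $\forward_d$ consistently between \cref{lemma:expectedtransform1d}, \cref{thm:expectedtransform}, and the decomposition \cref{eqn:decomposed} so that the answer comes out as $2^{\ell}\cL_d^{-1}{\bf E}_d$ rather than its negative.
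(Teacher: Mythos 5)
Your proposal follows essentially the same route as the paper's proof: collapse $D_4 D_5 C_5 C_4$ to the identity, treat $D_2 D_3$ as the linear map $2^{\ell}\cL_d^{-1}$ via \cref{lemma:shift}, and identify $\mathbb{E}[\tilde C_3 {\bf W} - C_3 {\bf W}]$ with ${\bf E}_d$ from \cref{lemma:expectedtransform1d} and \cref{thm:expectedtransform} before pulling the linear maps through the expectation. Your closing remark about tracking the sign convention between $\forward_d = L_d\ba - \tilde L_d\ba$ and the difference $\tilde C_3{\bf W} - C_3{\bf W}$ flags a genuine subtlety that the paper's own proof glosses over.
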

\begin{proof}
	Similar to \cref{lemma:c2}, the expectation of the nonlinear term is
	\begin{align}
		{\mathbb{E}  \left[ \tilde{C}_3 {\teal{\bw}} - C_3 {\teal{\bw}} \right]}& = F_\cB^{-1} \left({\mathbb{E}\left [  \tilde{L}_d F_\cB({\teal{\bw}}) - L_dF_\cB({\teal{\bw}}) \right]}  \right)= F_\cB^{-1}\left( {{\bf E}_d} \right), \label{eqn:c3}
	\end{align}
where ${{\bf E}_d}$ is defined by \cref{lemma:expectedtransform1d} when $d = 1$ or \cref{thm:expectedtransform} when $d = 2,3$. Combining (\ref{eqn:c3}), \cref{lemma:shift}, and the linearity of expectation, the observation follows
	\begin{align*}
	{\mathbb{E} \left[  D_2 D_3 D_4 D_5 C_5 C_4 \left( \tilde{C}_3 {\teal{\bw}} - C_3 {\teal{\bw}} \right)\right]} & = {\mathbb{E}\left[   F_{\cB} S_{-\ell}F_\cB^{-1}{\cL}_d^{-1}F_\cB  \left( \tilde{C}_3 {\teal{\bw}}- C_3 {\teal{\bw}} \right ) \right]}\\
%	 &= 2^{\ell}  {\cL}_d^{-1}   {\mathbb{E}\left[\tilde{C}_3 {\bf{W}} -C_3 {\bf{W}} \right]},\\
	&= 2^{\ell}  {\cL}_d^{-1}  {{\bf E}_d}.
	\end{align*}
\end{proof}
 \cref{lemma:c8} presents the expected value of the error caused by the eighth compression step, i.e., the truncation of the transform coefficients. 
\teal{The precision of the negabinary value in Step 8 is $\tilde{q} = q + 2$, as the sign bit and guard bit are utilized when mapping from the signed binary representation in Step 5 (see \cite{errorzfp}). Define $\Delta$ as the quantization step introduced by truncating a negabinary number in Step 8. 
In our case, it is defined as $\Delta = 2^{q-\beta+2}$, where $q$ is the precision of the block floating-point representation, and $\beta$ is the negabinary truncation parameter, which represents the number of bit planes kept in Step 8.
}
\begin{lemma} \label{lemma:c8}	
Let  $\bf{Y}:= {\bf A}^{4^d}_{\{\cN_{\teal{\tilde{q}}},0\}}$ \teal{be a distribution such that for every realization $\by$ of a random vector drawn from $\bf{Y}$, it holds that $f_\cN(\by_i) \neq 0$ for all $i$}, for some precision $\teal{\tilde{q}} = q +2$ and \teal{$0\leq \beta \leq q - 2d+2$.}  Let $e_{min,\cN}(\ttwo) > \teal{\tilde{q}-\beta-1}$. Then 
	\begin{align*}
{\mathbb{E}\left[D_2 D_3 D_4 D_5 \left(  \tilde{C}_8 \teal{\by} - \teal{C_8}\teal{\by} \right)\right ]}&= \teal{\pm \Delta  \frac{2^{\ell}}{6}L^{-1}_d\bfo , }
\end{align*}
\teal{where the expected value is positive/negative if $ \tilde{q}-\beta-1$ is even or odd, respectively. }
	\end{lemma}
\begin{proof}
If $e_{min,\cN}(\ttwo) > \teal{\tilde{q}-\beta -1 }$ , then $e_{max,\cN}(\ttwo_i) > \teal{ q - \beta +1 }$ for all $i$. \teal{Recall that, $	\tilde{C}_8 = T_{\mathcal{P}},$ where,  $\mathcal{P} = \{ i \in \mathbb{Z} : i > q - \beta +1 \}$.}
	  \teal{	  	By applying \cref{lemma:truncneg}, the quantization error is given by
	  	\begin{align} F_\cN\left( \tilde{C}_8 \teal{\by} - \teal{C_8}\teal{\by} \right) = F_\cN\left( {T}_\cP\teal{\by} - \teal{\by} \right).
	  	\end{align}
From \cref{lemma:truncneg}, this quantization error for each element of the vector lies in the interval $(\pm 2/3, \mp 1/3)\Delta$ if $q - \beta +1 $ is even or odd, respectively.   The inverse decorrelating transform then scales and transforms this error. Thus, the expected error is given by }
\begin{align}
	{\mathbb{E}\left[ F_\cN\left(  \tilde{C}_8 \teal{\by} -  \teal{C_8}\teal{\by}   \right)\right]} 
	& =  \frac{(-2)^{q+1-\beta}}{6} \bfo \teal{=  \pm\frac{\Delta}{6} \bfo.} \label{eqn:c8}
 \end{align}
Combining (\ref{eqn:c8}), \cref{lemma:shift}, and the linearity of expectation, we have 
\begin{align*}
{\mathbb{E}\left[D_2 D_3 D_4 D_5 \left( \tilde{C}_8 \teal{\by} -   \teal{C_8}\teal{\by}   \right)\right]} 
= 2^{\ell}L^{-1}_d {\mathbb{E}\left[  F_\cN  \left(  \tilde{C}_8\teal{\by}-   \teal{C_8}\teal{\by}   \right) \right]} \teal{= \pm \Delta  \frac{ 2^{\ell}}{6} L^{-1}_d \bfo.}
\end{align*}
\end{proof}
\teal{Furthermore, we note that $L_d^{-1} \bfo$ frequently appears in our expressions below. For 
	$d=1$, the corresponding values are explicitly provided in \cref{tab:diststats}.}
Finally, \cref{lemma:d2} presents the expected value for the first error term in \cref{eqn:decomposed} caused by the \teal{decompression operator associated with Step 2}, mapping the values \red{within} the block back to an IEEE floating-point representation.
\begin{lemma}\label{lemma:d2}
         Let $k \in \mathbb{N}$ be the precision for the floating-point representation.
        Let $\bf{Z}:= {\bf A}^{4^d}_{\{\cN_q,\iota\}}$  \teal{be a distribution such that for every realization $\bz$ of a random vector drawn from $\bf{Z}$, it holds that $f_\cB(\bz_i) \neq 0$ for all $i$ }, for some precision $q$. 
         Then 
	\begin{align*}
	{\mathbb{E} \left[\tilde{D}_2 \teal{\bz} - {D}_2 \teal{\bz}\right]}  =  \bfz. 
	\end{align*}
\end{lemma}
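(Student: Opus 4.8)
The plan is to reduce the claim to \cref{lemma:truncbinary}(i) applied to the operator $\mathrm{fl}_k$ hidden inside $\tilde{D}_2$, combined with the linearity of the remaining lossless pieces of $\tilde D_2$ and $D_2$. Recall $\tilde D_2(\ba) = F_\cB \, S_{-\ell}\, \mathrm{fl}_k(\ba)$ and $D_2(\ba) = F_\cB\, S_{-\ell}(\ba)$, so $\tilde D_2 \tone - D_2 \tone = F_\cB\, S_{-\ell}\,(\mathrm{fl}_k(\tone) - \tone)$. Since $F_\cB$ and $S_{-\ell}$ are linear, and by \cref{lemma:shift} the shift scales expectation by $2^{\ell}$, I would pull them outside the expectation and be left with needing $\mathbb{E}[\mathrm{fl}_k(\bf Z) - \bf Z] = \bfz$ componentwise.

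First I would unpack $\mathrm{fl}_k$: by definition $\mathrm{fl}_k(\ba)_i = t_{\cR_{ik}}(\ba_i)$ with $\cR_{ik} = \{ j \in \mathbb{Z} : j > e_{max,\cB}(\ba_i) - k\}$. So on each component this is exactly a truncation operator $t_\cS$ with $\cS = \{ j : j > \eta_i\}$ where $\eta_i = e_{max,\cB}(\ba_i) - k$. To invoke \cref{lemma:truncbinary}(i) I need the leading bit to survive the truncation, i.e. $e_{max,\cB}(\ba_i) > \eta_i$; but $e_{max,\cB}(\ba_i) - \eta_i = k > 0$, so this holds automatically for every nonzero component — which is why the hypothesis $F_\cB(\tone)\neq\bfz$ is imposed. (Here I should note the transition from $\cN$ to $\cB$: the elements of $\bf Z$ lie in $\cN_q$ but are fed to $\tilde D_2$ after $D_5$, so effectively we work with their signed-binary images; the hypothesis $e_{min,\cB}(\tone) > k$ guarantees the truncated bits are the genuine trailing bits and that $\mathrm{fl}_k$ actually discards at least one bit per component, i.e. we are in case (i) rather than a no-op.) Assuming the post-leading bits are uniformly random per the standing assumption of \cref{section:biastools}, \cref{lemma:truncbinary}(i) gives $\mathbb{E}[t_{\cR_{ik}}(\bf Z_i) - \bf Z_i] = 0$ for each $i$, hence $\mathbb{E}[\mathrm{fl}_k(\bf Z) - \bf Z] = \bfz$.

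Then I would reassemble: by linearity of expectation and of $F_\cB \circ S_{-\ell}$,
\begin{align*}
\mathbb{E}\left[\tilde D_2 {\bf Z} - D_2 {\bf Z}\right] = F_\cB\, S_{-\ell}\, \mathbb{E}\left[\mathrm{fl}_k({\bf Z}) - {\bf Z}\right] = F_\cB\, S_{-\ell}\, \bfz = \bfz,
\end{align*}
invoking \cref{lemma:shift} to justify that the constant $2^{\ell}$ scaling of the zero vector is still zero. This closes the proof.

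The main obstacle I expect is bookkeeping around the IEEE rounding mode and the space switch from negabinary to binary: \cref{lemma:truncbinary} is stated for truncation (round-toward-zero) on $\cB_k$, whereas $\mathrm{fl}_k$ "depends on the IEEE rounding mode" per the text after its definition. If the rounding mode is round-to-nearest, the per-component error is no longer a clean truncation with a symmetric uniform distribution, so either the proof quietly assumes round-toward-zero (consistent with the $t_\cS$-based modeling used throughout the paper) or one must argue round-to-nearest is also unbiased on a symmetric input distribution. I would make this assumption explicit, and also double-check that the hypothesis $e_{min,\cB}(\tone) > k$ together with $\beta$ being bounded really does force $\mathrm{fl}_k$ into case (i) uniformly over the block rather than leaving some components untouched (in which case the error is trivially zero anyway, so the conclusion is unaffected — but the cleanest write-up handles both sub-cases).
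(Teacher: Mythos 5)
Your proposal is correct and follows essentially the same route as the paper's own proof: unpack $\tilde D_2 - D_2$ into $F_\cB\, S_{-\ell}\,(\mathrm{fl}_k(\cdot) - \cdot)$, pull the linear shift out of the expectation via \cref{lemma:shift}, and apply \cref{lemma:truncbinary} to the componentwise truncations $t_{\cR_{ik}}$ to get zero mean. Your added care about which case of \cref{lemma:truncbinary} applies and about the rounding-mode convention for $\mathrm{fl}_k$ is more explicit than the paper, but it does not change the argument.
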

\begin{proof}
	Combining the definition of $\tilde{D}_2$ and $D_2$ with \cref{lemma:truncbinary} and \cref{lemma:shift}, we observe 
	\begin{align*}
{\mathbb{E} \left[ \tilde{D}_2 (\teal{\bz})  - D_2( \teal{\bz}) \right] }
	& ={\mathbb{E} \left[  F_\cB \left(S_{-\ell} fl_k (\teal{\bz})\right) - F_\cB\left(S_{-\ell}(\teal{\bz}) \right) \right]} \\
	&=  2^{\ell} { \mathbb{E}\left[ F_\cB \left(fl_k (\teal{\bz})-  \teal{\bz}   \right) \right]} = \bfz,
	\end{align*}
	where $fl_k (\tone)_i  = t_{\cR_{ik}} (\tone_i)$ with $\cR_{ik} = \{ j \in \mathbb{Z}: j > e_{max, \cB} (\tone_i) - k \}$ for all $i$. 
\end{proof}
\teal{Note that the above lemma can also be deduced logically, as the operator $fl_k (\cdot)_i$ mimics rounding to the nearest value, resulting in an expected error of zero when converting to floating-point representation. However, the same cannot be said for Step 2 compression, as it involves truncation rather than rounding to the nearest value, simply zeroing out bits.} 
Now that the expected value of each error term is explicitly defined, using the linearity of expected values, we can determine the expected value of the total compression error.
\begin{theorem}
	\label{thm:bias}
\teal{For some precision $k$,} let $\bf{X}:= F_{\cB}\left({\bf A}^{4^d}_{\{\cB_k,\iota\}}\right)$ \teal{be a distribution such that for every realization $\bx$ of a random vector drawn from $\bf{X}$, it holds that $\bx_i \neq 0$ for all $i$}. 
Let $0\leq \beta \leq q - 2d+2$, where $q \in \mathbb{N}$ is the precision for the block-floating-point representation.
Assume the respective assumptions from \cref{lemma:c2}--\cref{lemma:d2} defined by the distributions from \cref{eqn:decomposed}.
Then 
\begin{align} \label{eqn:bias}
{\mathbb{E}\left[  \tilde{D}(\tilde{C}(\teal{\bx}))- D(C(\teal{\bx}))   \right]} 
	&=  \teal{ \pm \Delta \frac{2^{\ell}}{6} \left( L^{-1}_d \bfo + {{\bf E}_d} \right)}, 
\end{align}
\teal{where $\ell$ is defined as the difference between the maximum exponent of $\bx$ in its block-floating-point representation and the precision $q$, specifically,
$ \ell = e_{max}(F_\cB^{-1}(\bx)) - q + 1. $ The operator $L_d^{-1}$ is defined in \Cref{sec:step3}, and ${\bf E}_d$ is defined in \cref{thm:expectedtransform}. }
\end{theorem}
\begin{proof}
Using the distributive property of linear operators and adding by zero, the total compression error is decomposed as
\begin{align}
 \tilde{D}&(\tilde{C}(\teal{\bx})) -  D(C(\teal{\bx}))
 =   \left( \tilde{D}_2{\teal{\bz}} - D_2{\teal{\bz}} \right)+ D_2 D_3 D_4 D_5 \left(  \tilde{C}_8 {\teal{\by}} -  C_8{\teal{\by}} \right) \label{eqn:totalerror}\\
&+  D_2 D_3 D_4 D_5 C_5 C_4 \left( \tilde{C}_3 {\teal{\bw}} - C_3 {\teal{\bw}}  \right )+D_2 D_3 D_4 D_5 C_5 C_4C_3\left ( \tilde{C}_2 {\teal{\bx}} -  {C}_2 {\teal{\bx}}  \right), \notag 
\end{align} 
where ${\teal{\by}} =  {D}_3 D_4 D_5 \tilde{C} {\teal{\bx}} $, ${\teal{\bz}} = C_5 C_4 \tilde{C}_3 \tilde{C}_2 {\teal{\bx}}$, and ${\teal{\bw}} = \tilde{C}_2 {\teal{\bx}}$. Even though the terms are dependent,  expectation is linear, i.e., regardless of whether the sum of random variables are independent, the expected value is equal to the sum of the individual expected values. Combining \cref{eqn:totalerror} with \cref{lemma:c2}--\cref{lemma:d2}, and the linearity of expectation, the result follows.
\end{proof}
As can be seen, the expected value in \cref{eqn:bias} is not zero. As it is assumed that  $0\leq \beta \leq q - 2d+2$, the magnitude of the leading order term in the theoretical expected value is   
\begin{align}
\teal{\mathcal{O}\left( \pm \Delta \frac{2^{\ell}}{6}L_d^{-1}\bfo\right),}
\end{align} 
which is from the truncation of a negabinary representation that is magnified by the backwards transform operator.
\teal{For $1\leq d \leq 4$, we can bound the term $\|L_d^{-1}\bfo\|_\infty \leq \left(\frac{15}{4} \right)^d$. }
Note that the assumptions in \cref{thm:bias} are relatively strict. 
Mainly, when applying $\tilde{C}_8$, the negabinary truncation, we require the index of the leading bit in each element to be greater than $q -\beta+1$, which will most likely be unsatisfied for blocks whose elements are smooth; this is because the forward transform operator pushes all the energy into the low frequency components.
For example, if the forward transform operator is applied to the constant vector with white noise, \teal{$\omega$, where $\omega_i \ll 1$}, we have, $
L_1(\bfo + \omega) \approx \begin{bmatrix}1 &0 &0 &0\end{bmatrix}^t.
$
In this case, when the truncation operator is applied, only the first element satisfies the assumption causing the predicted mean for the remaining elements after Step 8 to be an overestimate  when, in reality, it is closer to zero. 
Once the decorrelating transform, $L_d^{-1}$, is applied, the \teal{bias is magnified; however, in practice, this error is typically smaller.
Nevertheless,} this does not necessarily mean that the results from \cref{thm:bias} are not useful for approximating the bias.
Even if the theoretical estimate of the mean is an over or underestimate, the relative magnitude of the leading order bias term is captured, which is associated with the error caused by the compression Step~8 and the decompression Step~3.
In the next section, we provide test results on a simulated dataset to test the accuracy of our theoretical bias estimation. 

\section{Numerical Results}
\label{sec:results}
As observed in \Cref{sec:totalcompressionerror}, there is indeed a bias in the compression error in the current implementation of the ZFP algorithm. The first numerical test studies the effectiveness of our theoretical results on generated $4^d$ blocks. The second test is on real-world data from a climate application; \cite{Hammerling} has a more in-depth study of the bias of ZFP for this particular data set. 

\subsection{Synthetic $4^d$ Block}
\label{sec:generatedBlock}
In the first numerical test, we wish to mimic the worst possible input for ZFP for a chosen exponent range,
 \begin{align}
 	\label{eqn:rho}
\rho = e_{max}-e_{min},  
\end{align}
where $e_{max} = e_{max,\cB}(\bx)$ and $e_{min} = e_{min,\cB}(\bx)$ for block $\bx \in \mathbb{R}^{4^d}$. In each example, a $4^d$ block was formed with absolute
values ranging from $2^{e_{min}}$ to $2^{e_{max}}$. The exponent
$e_{min}$ remains stationary while $e_{max}$ varies, depending on the
chosen exponent range.
The interval $[e_{min}, e_{max}]$ was divided
into $4^d$ evenly spaced subintervals. Each value of the block was randomly 
selected from a uniform distribution in the range
$[2^{e_{min}+(h-1)\frac{e_{max}-e_{min}}{4^d}},
2^{e_{min}+h\frac{e_{max}-e_{min}}{4^d}}]$ with {sub}interval
$h\in \{1,\dots,4^d \}$ and uniform randomly assigned sign. Using the C$++$ standard library function
{\it{random\_shuffle}}, the block was
then randomly permuted to remove any bias in the total sequency order. The block was then compressed and decompressed with \red{precision $\beta$.}

For \cref{fig:1drelative error},
the data is represented and compressed in single precision (32-bit IEEE standard), i.e., $k= 24$,  with $e_{min}  = -20$, while $e_{max} $ varies with respect to the required exponent range. Note that similar results can be produced for any value of $e_{min}$ as the block-floating-point representation converts the block to signed integers. The only difference in the results occurs when the exponent range, $\rho$, increases. In this example we let $d=1$. One million blocks were generated using the above routine for a single $\beta$, which were then compressed and decompressed. The {\it average compression error} of the one million \teal{synthetic} blocks was recorded, denoted by the vector $\bar{\bx}$. The {\it theoretical expected value}, represented as a vector $\bm{\mu}$, is defined by \cref{eqn:bias}. For this particular example, $q = 30$ and $\ell = (e_{min}+\rho)-q+1 = -49 +\rho$. Note that $2^{e_{max} - k}$ is the minimal representable magnitude for the decompression Step 2, i.e., the minimal representable magnitude for the conversion from a block-floating-point representation to an IEEE representation. As the error was calculated in double precision, any experimental or theoretical mean whose absolute value is less than $2^{e_{max} - k}$ is essentially zero; thus,  for our demonstration, we rounded such values to zero.

For $ \rho = 0$, meaning that the
magnitude of the absolute values of the four element block are similar, $\ell$ is as small as it can be resulting in the theoretical bias to have a magnitude of 
$
\mathcal{O}\left( 2^{e_{min}-q+1  } \frac{(-2)^{q-\beta+1 }}{6}L_1^{-1}\bfo\right).
$
 As the
exponent range increases, {fewer bits are used to represent the smaller values in each block during Step 2, which will result in a larger value for $\ell$, increasing the magnitude of the expected value further away from zero, resulting in a magnitude of 
$
	\mathcal{O}\left(2^{{e_{max}}-q+1  } \frac{(-2)^{q-\beta+1 }}{6}L_1^{-1}\bfo\right).
$
The top row in \cref{fig:1drelative error} presents the following results for $\rho = 14$. The leftmost plot depicts the ratio of the experimental and theoretical mean, i.e., $\bar{x}_i/\mu_i$ for each element $i$. 
The vertical axis represents the element index, $i$. For varying $\beta$, represented along the horizontal axis, the magnitude of the ratio is from 0.96 to 1.04, represented as a variation of red to blue, respectively. 
For all $\beta$ values, one can see that the ratio is approximately one, which means that the theoretical prediction is correct in sign as well as in magnitude. 
Note that the white blocks represent when the experimental mean for the element index was less than $2^{e_{max} - k}$, where $k= 24$. To see if the theoretical prediction of the expected value is mimicking the experimental mean,  the middle figure is a side-by-side comparison where the top plot is the experimental mean, and the bottom is the \teal{predicted error bias}. 
As $\beta$ decreases, the theoretical mean follows the same pattern as the experimental mean for all elements $i$. Lastly, the rightmost figure depicts the relative error between the \teal{predicted error bias} and the experimental mean. 
The vertical axis represents the relative error, while the horizontal axis represents the fixed precision parameter $\beta$. Each element index is represented as a different color. 
At most, our \teal{predicted error bias} is off by $4\%$ from the experimental mean; however,  in most cases, it is much less. \cref{fig:1drelative error_new} depict the same results but for when $\rho=0$. 
Similar conclusions can be seen; however, the magnitude of the expected value has increased as \cref{eqn:bias} is a function of $\rho$. 
%Additionally, \cref{fig:rho141d} depict the same results for when $\rho=14$.

	\vspace{5pt}
	\begin{figure}
	\centering
%	\begin{subfigure}[b]{\textwidth}
%		\includegraphics[width=\textwidth]{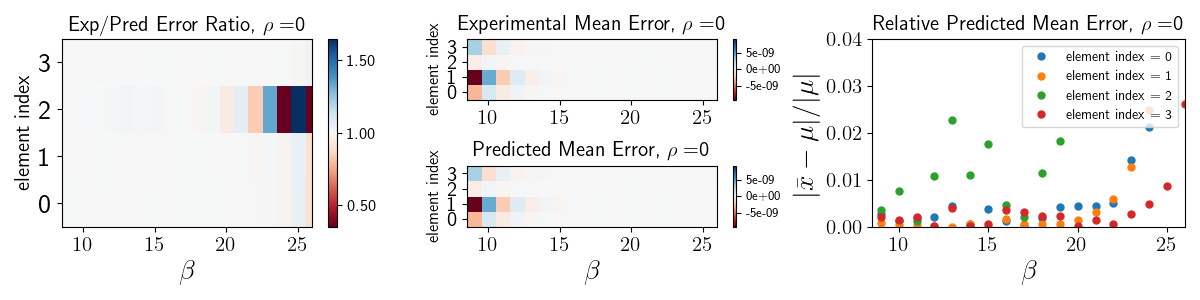}
%%		\caption{$\rho = 0$}
%		\label{fig:rho01d}
%	\end{subfigure}
%	\begin{subfigure}[b]{\textwidth}
%		\includegraphics[width=\textwidth]{}
%		\caption{$\rho = 7$}
%		\label{fig:rho71d}
%	\end{subfigure}
	\begin{subfigure}[b]{\textwidth}
		\includegraphics[width=\textwidth]{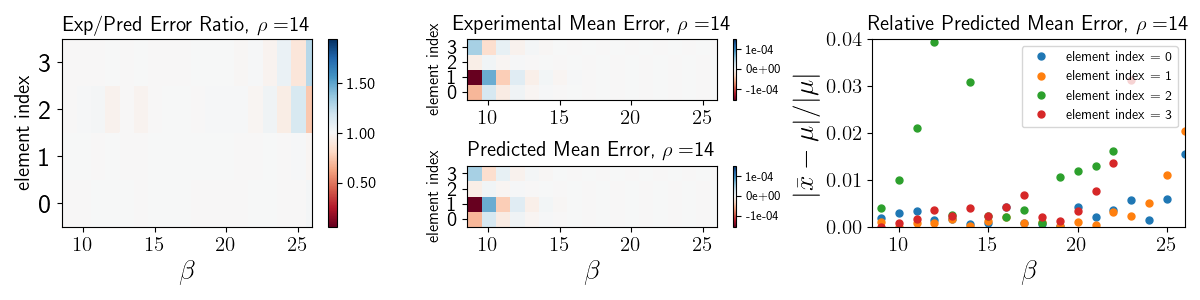}
%		\caption{$\rho = 14$}
		\label{fig:rho141d}
	\end{subfigure}
	\caption{1-d Simulated Example: Each row depicts the ratio, a side-by-side comparison, and the relative error of the experimental and \teal{predicted error bias} error for $\rho=14$ , where $\rho$ is the \teal{exponent} range of values in a block defined by \cref{eqn:rho}.} 
	\label{fig:1drelative error}
\end{figure}

\cref{fig:2drelative error} and \cref{fig:3drelative error} depict the same test as described above for $d = 2$ and $d= 3$. ZFP typically can compress more effectively as $d$ increases. This is partly because the forward transform operator, $L_d$, is applied to each dimension, pushing more of the energy into the lower frequency components. Thus, the index of the leading non-zero bit after applying the forward transform tends to decay with respect to the ordering of the transform coefficients due to the total sequency applied in Step 4. 
Thus, as $\beta$ decreases, our \teal{predicted error bias} tends to degrade as some of the assumptions in  \cref{thm:bias} are no longer valid. Mainly, the assumption that ${e_{max}\left( \tilde{C}_8C_5C_4\tilde{C}_3\tilde{C}_2(\bx)\right)} \geq q-\beta+2 $ is violated. 
\teal{This violation implies that the truncation operator in Step 8 leads to the truncation of leading one-bits, as described in \cref{lemma:truncneg}. }
The error that is caused by the truncation operator, when $\beta$ is small, is \teal{magnified} when the backward decorrelating transform operator is applied, causing the mean to be overestimated, which can be seen in  the rightmost figures in \cref{fig:2drelative error} and \cref{fig:3drelative error}. 
%The location where the theoretical mean performs the worst is determined by 
%$ index= \min_i|\sum_{j=0}(L_d)_{ij}|,$ i.e, $index = 11$ when $d =2$ and $index = 43$ when $d=3$. Since $\min_i|\sum_{j=0}(L_1)_{ij}| = 2$ and $L_d$ is a Kronecker product of $L_1$, the maximum error will occur along the dimension $\be_2 =[0, 0, 1, 0]^t$, i.e, for $d= 3$, the index is $
%{(I \otimes \be_2^t)x \cap ( \be_2^t\otimes I )x,}
%$
%where $x = \{ 0, \cdots, 4^d-1\}$ defines the index set and $(I \otimes \be_2^t)x$ defines applying the Kronecker product of the identity matrix with $\be_2$ to the index set represented as a column vector.  

\subsection{Climate Data Real-World Example} 
\label{section:ncar}
As climate model simulations produce large volumes of data, climate scientists have been interested in adopting lossy compression schemes. 
It was noted in \cite{Hammerling}  that ZFP has a bias with respect to the element index within a block. 
In this section, we perform the same test as in \cite{Hammerling}, but also include results concerning the \teal{predicted error bias} from \cref{thm:bias} to show the accuracy of our predicted bias within a real application area. 
For this application, we test the surface temperature (TS) data from the  CESM Large Ensemble Community Project (CESM-LENS).  
The publicly available CESM-LENS data set contains 40 ensemble runs for the period 1920-2100. As in \cite{Hammerling}, we use only the historical forcing period, i.e., from 1920-2005, for more details see \cite{Hammerling}. At this resolution, the CAM grid contains $288 \times 192$ grid points and 31,390 time slices. 
\red{From left to right, \cref{fig:ncaroldzfp} presents the experimental and theoretical mean error over time, along with the minimum of the cumulative distribution function (CDF) and complementary cumulative distribution function (CCDF) of the relative error between the two, presented as a unitless measure with respect $\Delta$. 
The leftmost figure shows the grid cell-level observed errors averaged across time for the daily TS data at $\beta = 20$. 
The middle figure illustrates the theoretical mean error calculated using \cref{thm:bias}, based on the maximum exponent $e_{max,\cB}$  for each block. 
Finally, the rightmost figure displays the minimum of the CCDF and CDF of the relative error between the actual mean (\(\bar{\mathbf{x}}\)) and the predicted mean (\(\bm{\mu}\)), calculated as \(\frac{\bar{x}_i - \mu_i}{\hat{\Delta}}\), where \(\hat{\Delta} = \Delta \cdot 2^{e_c - q + 1}\). Here, \(e_c\) is the common exponent of the dataset, which for this dataset is \(e_c = 8\).}
One can see that the theoretical mean is of the same magnitude as the true mean. 
Additionally, the pattern in the bias is similar, if not the same. 
\red{In this dataset, which contains many smooth regions, ZFP compression incurs little to no error in these areas for large $\beta$ values. As a result, while we do observe some large relative errors  these are primarily due to our predicted error overestimating the true error. This is because the observed error in smooth regions is often zero, while the predicted error remains non-zero. However, more often than not, we accurately predict the error. This is evident in the rightmost graph, where small relative errors dominate, resulting in a left-skewed graph. }
We are able, on the whole, to accurately depict the bias that occurs in ZFP for the daily TS data.
%  \edit{We note that the error tends to increase around the poles. The error associated with the transition from the ocean to land partly involves the temperature dropping below $256 ^{\circ}K$, causing a binary exponent boundary near the poles, violating theoretical assumptions in \cref{thm:bias}.   }
%
\vspace{5pt}
\begin{figure}
	\centering
	\begin{subfigure}[b]{\textwidth}
	\includegraphics[width=\textwidth]{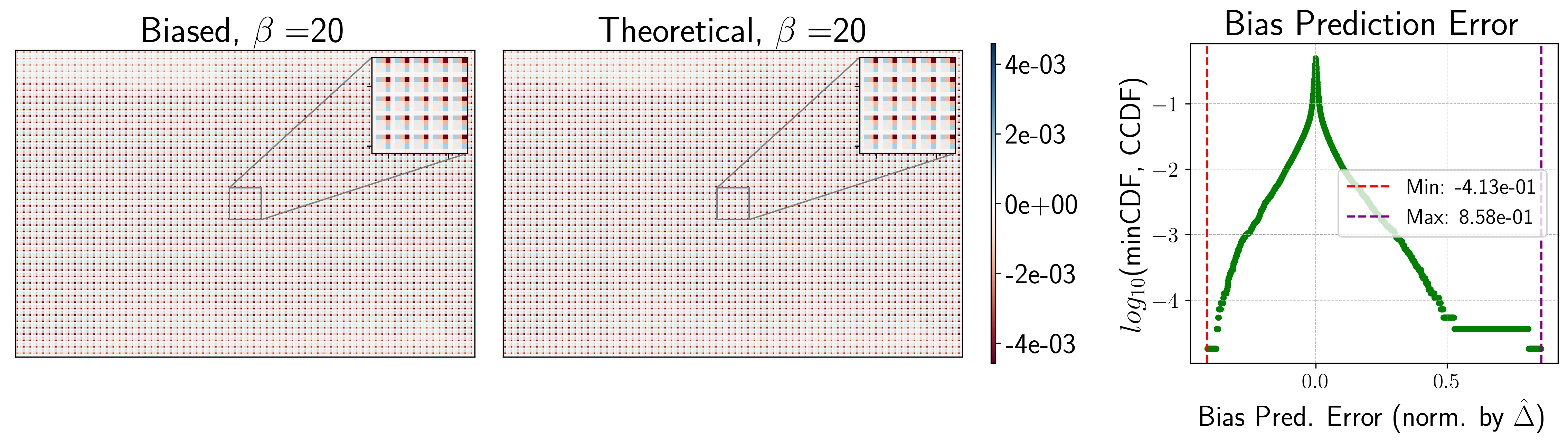}
%	\caption{$\beta = 20$}
	\label{fig:true20}
	\end{subfigure}
\caption{Climate Data Real-World Example: Each row depicts the experimental mean \teal{error}, the \teal{predicted error bias}, and \teal{the minimum of the complementary cumulative distribution function (CCDF) and the cumulative distribution function (CDF) of the relative error $(\bar{x}i - \mu_i) / \hat{\Delta}$, where $\beta$ is the number of bit-planes kept at Step 8 for $\beta = 20$, $\hat{\Delta} = \Delta \cdot 2^{e_c - q + 1}$ otherwise, and $e_c$ is the common block-floating-point exponent, and $q$ is the precision of the block floating-point representation.}}
%\caption{Climate Data Real-World Example: Each row depicts the experimental mean \teal{error}, the \teal{predicted error bias}, and \teal{a complementary cumulative distribution function (CCDF) of the relative error $(\bar{x}_i - \mu_i) / \hat{\Delta}$, where $\beta$ is the number of bit-planes kept at Step 8 for $\beta = 20$, $\hat{\Delta} =  2^{e_{max}-q+1}*2^{q-\beta+2}$, and $q$ is the precision of the block floating-point representation.}}
\label{fig:ncaroldzfp}
\end{figure}

\section{ZFP Bias Correction}
\label{sec:correction}
\teal{While the magnitude of the bias is fixed relative to the magnitude of the error from the quantization step ($\Delta$), its impact depends on the ratio between $\Delta$ and the transform coefficients. In practice, $\Delta$ is often much smaller than the values being compressed, which generally makes the bias relatively small. However, the relative error in the transform coefficients may become significant in certain scenarios, such as when the coefficients are close to zero. While this relative error may seem large, it does not necessarily matter in practice, as transform coefficients that are zero (or near zero) have negligible impact on the reconstruction. Instead, the bias introduced by quantization is primarily relevant when considering the reconstructed values, where artificial correlation in the errors may affect the final output. In such cases, some application areas may benefit from reducing the bias to minimize its impact on the reconstructed values.
} 
%Though the magnitude of the bias is extremely small relative to the input and the precision parameter, some application areas would benefit if the error was less correlated. 
Simple modifications to ZFP that involve rounding can be implemented, drastically reducing the bias's magnitude. We saw in \cref{thm:bias} that the truncation of the negabinary representation causes the largest source of \teal{error, which is then amplified by the application of the inverse decorrelating operator.} Simplifying the results, the error is either in $\teal{\left( -\frac{1}{3}\Delta, \frac{2}{3}\Delta \right)}$ or $\teal{\left( -\frac{2}{3}\Delta, \frac{1}{3}\Delta \right)}$, \teal{where $\Delta$ denotes the quantization step,} before it is propagated back by $L_d^{-1}$. 
\teal{ Refer to \cref{lemma:truncneg} and \cref{lemma:c8} for additional details.} 
	 Thus, if we can mitigate these errors before the propagation back to the original space, then we can reduce the bias effect. 
	 One simple modification to ZFP that was proposed in \cite{Hammerling} is to offset the decompressed values in order to center the error around zero. 
	 We center the reconstructed transform coefficients within the interval after the negabinary truncation by adding(subtracting) a scaled factor of $\frac{1}{6}$, i.e., shift the interval of the error from 
	$\teal{ \left( \mp\frac{1}{3}\Delta, \pm\frac{2}{3}\Delta \right)}$ to $\teal{ \left( -\frac{1}{2}\Delta, \frac{1}{2}\Delta \right)}$.  Note that the error interval of $\teal{ \left( \mp\frac{1}{3}\Delta, \pm\frac{2}{3}\Delta \right)}$ applies when the leading one bit is not truncated. Otherwise, the truncation operator maps the value to 0, i.e., $t_{\cS}(x) = 0$ and adding(subtracting) $\frac{1}{6}\teal{\Delta}$ would add \teal{a nonzero term}. This rounding scheme is referred to as {\it postcompression} rounding. 
	\teal{Adding a nonzero term can be problematic, particularly when the true value is very close to zero. In such cases, the coefficient may be represented as $\pm\frac{1}{6}\teal{\Delta}$, which can be significantly larger in magnitude than the true value itself. This introduces a noticeable bias, especially when the leading one-bit is truncated.}

 \teal{To address this issue, we propose a new rounding option called {\it precompression} rounding. In this approch,} the transform coefficients are modified by adding(subtracting) \teal{$\frac{1}{6} \Delta$}, \emph{before} converting to negabinary and applying the truncation step. 
This has \teal{a similar effect to {\it postcompression} rounding when the leading one-bit is not truncated but significantly reduces the bias when this bit is truncated} as it centers the error around zero but is more resilient to biasing effects from \cref{lemma:truncneg}. 
\teal{Both precompression and postcompression rounding options are available in ZFP when operating in either fixed-precision or fixed-accuracy mode, providing flexibility to reduce bias based on application needs.
Precompression rounding is analogous to mid-tread quantization, while postcompression rounding is analogous to mid-riser quantization. Both approaches achieve ``round-to-nearest'' logic, which reduces bias compared to the original, biased quantization scheme that simply truncates the negabinary representation.}
\footnote{In ZFP~1.x, precompression, postcompression, and no rounding are available through the \texttt{ZFP\_ROUND\_FIRST}, \texttt{ZFP\_ROUND\_LAST}, and \texttt{ZFP\_ROUND\_NEVER} compile-time settings.}

Let us define the \teal{bias correction} operator for {\it post} and {\it precompression} rounding as follows: 
\teal{For a scalar $ a \in \cN $, the \teal{bias correction} operator is defined as
$ \Psi_{\Delta}(a) := a + f_\cN^{-1} \left( \pm \frac{ 1 }{6} \Delta \right),$
where $\Delta = 2^{q - \beta +2}$, and the sign ($+$ or $-$) is determined by whether $q-\beta+2$ is even or odd, respectively.
This scalar definition trivially extends to vectors. For a vector $ 
 \in \cN^{4^d} $, the \teal{bias correction} operator is applied element-wise, i.e.,
$$ \Psi_\Delta( \bd ) := \bd + F_\cN^{-1}\left (\pm \frac{ 1}{6} \Delta \bfo_n\right). $$}
\teal{Here, ${\teal{{q}}} - \beta+1 \in  \teal{\mathbb{N}}  $} is the starting index of the bits that will be discarded when the truncation operator  in Step 8, $q$ is the precision of the block floating-point representation, and $\beta$ is the number of bit-planes kept during the truncation of the negabinary representation.  }
For {\it precompression} the \teal{bias correction} operator is applied between $C_7$ and $C_8$, while for {\it postcompression} the \teal{bias correction} operator is applied between $D_8$ and $D_7$. 
 For the  {\it postcompression} step we can revise \cref{lemma:truncneg}, as seen in \cref{lemma:truncneg_postcompression}.
\begin{lemma}\label{lemma:truncneg_postcompression}
	Assume $\teal{q},\teal{\beta} \in \teal{\mathbb{N}} $ \teal{such that $q> \beta-2$}, ensuring $q - \beta+1 \in \teal{\mathbb{N}}$ and $\cS = \{i \in \mathbb{Z}:   i \geq  q - \beta+2 \}$. Define the distribution $ A:= A_{\{\cN_{\teal{q+2}},0\}}$.  \teal{Let $a\teal{\ \sim\ } A$, then }
	\begin{itemize}
		\item[(i)] Assume $e_{max,\cN}(a)  \geq  q - \beta+2 $. 
		Then $f_\cN(\teal{\Psi_\Delta(t_{\cS}(a)) - a} ) \in \Delta \left( -\frac{1}{2},\frac{1}{2}\right)$, such that $\mathbb{E}[\teal{\Psi_\Delta(t_{\cS}(a))- a}]=0 $,
		\item[(ii)] Assume $e_{max,\cN}(a) <   q - \beta+2 $ . Then $\mathbb{E}[\teal{\Psi_\Delta(t_{\cS}(a))- a}  ] =  \mathbb{E}[\teal{a}] \pm \frac{ 1 }{6}  \teal{\Delta}$.
%		%		 and $Var( t_{\cS}(X) - X ) =Var(x)$
	\end{itemize}
\end{lemma}
\begin{proof}
	\begin{itemize}
		\item[(i)]	Let  $e_{max,\cN(a)} \geq q- \beta +2 $\teal{, i.e., the leading one bit is not truncated.}  Observe from \cref{lemma:truncneg} that if $q- \beta +2$ is odd, then
		\begin{align*}
		- \frac{1}{3}  \teal{\Delta}&\leq f_{\mathcal{N}} \left( \teal{t _{\mathcal{S}} (a)- a} \right)  \leq  \frac{2}{3}  \teal{\Delta},
		\end{align*}
		Thus, $ \red{f_{\mathcal{N}} \left({\Psi_\Delta(t_{\cS}(a))- a }\right)} $  is bounded by 
		\begin{align*}
	- \frac{1}{3} \teal{\Delta} - \teal{\Delta} \frac{1}{6}&\leq \red{f_{\mathcal{N}} \left({\Psi_\Delta(t_{\cS}(a))- a } \right)}   \leq  \frac{2}{3} \teal{\Delta} -  \frac{1}{6}  \teal{\Delta},  \\
			- \frac{1}{2}  \teal{\Delta} &\leq  \red{f_{\mathcal{N}} \left({\Psi_\Delta(t_{\cS}(a))- a } \right)}  \leq  \frac{1}{2}  \teal{\Delta}.
	\end{align*}
		Similarly, the same can be shown when $q- \beta +2$ is even. 
		\item[(ii)] Same as $(ii)$ in \cref{lemma:truncneg}.
	\end{itemize}
\end{proof}	

 Similarly, for the {\it precompression} step, we can revise \cref{lemma:truncneg}, as seen in \cref{lemma:truncneg_precompression}. 
\begin{lemma}\label{lemma:truncneg_precompression}
	Assume $\teal{q},\teal{\beta} \in \teal{\mathbb{N}} $ \teal{such that $q > \beta-1 $}, ensuring $\teal{q - \beta +1  } \in \teal{\mathbb{N}}$ and $\cS = \{i \in \mathbb{Z}:   i \geq  q - \beta+2\}$. Define the distribution $ A:= A_{\{\cN_{\teal{q+2}},0\}}$.  \teal{Let $a\teal{\ \sim\ } A$, then } 
\begin{itemize}
	\item[(i)] Assume $e_{max,\cN}(a) \geq  q - \beta+2 $. 
	Let $\hat{a} = \teal{\Psi}_\Delta(a),$ then $f_\cN(\teal{t_{\cS}(\hat{a})- a }) \in  \teal{\Delta} \left( -\frac{1}{2},\frac{1}{2}\right)$, such that $\mathbb{E}[\teal{t_{\cS}(\hat{a})- a }] =0 $,
	\item[(ii)] Assume $e_{max,\cN}(a)>  q - \beta+2 $. 
	Then $\mathbb{E}[\teal{ t_{\cS}(\hat{a})- a }] =  \mathbb{E}[\teal{a}] $.
%%	%		 and $Var( t_{\cS}(X) - X ) =Var(x)$}
\end{itemize}
\end{lemma}
\begin{proof}
\begin{itemize}
	\item[(i)]	Let  $e_{max,\cN}(a)\geq  q - \beta+2 $\teal{, i.e., the leading one bit is not truncated.} Observe that for $\cS = \{i \in \mathbb{Z}:   i \geq  q - \beta+2 \}$, we have 
	\teal{\begin{align*}
		 t_{\cS}(\hat{a}) - a & =  t_{\cS}(\hat{a})  - \Psi_{\Delta}(a) +    f_\cN^{-1} \left ( \pm \frac{1}{6}   \teal{\Delta}  \right) =t_{\cS}(\hat{a})  -\hat{a}  +   f_\cN^{-1} \left ( \pm \frac{1}{6}   \teal{\Delta} \right) 
	\end{align*}}
		It then follows that if $  q - \beta+2  $ is odd, then 
	\teal{\begin{align*}
		- \frac{1}{3}  \teal{\Delta} - \frac{1}{6}  \teal{\Delta} &\leq f_{\mathcal{N}} \left(	 t_{\cS}(\hat{a})  - a \right)  \leq  \frac{2}{3}  \teal{\Delta}  - \frac{1}{6} \Delta,\\
		- \frac{1}{2}  \teal{\Delta} &\leq f_{\mathcal{N}} \left(  t_{\cS}(\hat{a})   - a \right )  \leq  \frac{1}{2}  \teal{\Delta}.		
	\end{align*}}
	Similarly, the same can be shown when $ q - \beta+2 $ is even.
	\item[(i)] Same as $(ii)$ in \cref{lemma:truncneg}.
\end{itemize}
\end{proof}	

Both \cref{lemma:truncneg_postcompression} and \cref{lemma:truncneg_precompression} illustrate that the rounding schemes shift the negabinary values such that when the truncation is applied, the error is centered around zero \teal{provided that the truncation is applied for sufficiently small values of $\Delta$.}
For both rounding schemes, the implications for \cref{thm:bias} result in the expected error to be 
\teal{\begin{align} \label{eqn:biascorrected}
	{\mathbb{E}\left[ \tilde{D}(\tilde{C}(\teal{\bx}))  - D(C(\teal{\bx}))  \right]} 
	=  	2^{\ell}L^{-1}_d  {\bf E}_d, 
\end{align}}
\hspace{-1em} % Removes half a line of vertical space
where \teal{${\bf E}_d$ is the expected error from the forward transform operator defined by \cref{lemma:expectedtransform1d} and  \cref{thm:expectedtransform}.}
	Note that \cref{thm:bias} assumes that the leading bit for each value within the block is not truncated in Step 8. 
	As this assumption degrades, (ii) in \cref{lemma:truncneg_postcompression} and  \cref{lemma:truncneg_precompression} will begin to be present. This is especially an issue for {\it postcompression} as the rounding constant, $\pm \frac{1}{6} \Delta$, will be present in the expected error, causing the rounding scheme to be less resilient to the biasing effects.
	\teal{However, it is worth noting that, typically, $\mathbb{E}[a] = 0$ for coefficients other than the first one (the DC term), as $a$ approximately follows a Laplace distribution with zero mean\cite{lindstromdistrib}.}
%
%The simple modifications to ZFP were proposed in \cite{Hammerling}. One approach referred to as {\it{symmetric rounding}}, is to center the reconstructed transform coefficients within the interval after the negabinary truncation by adding(subtracting) a scaled factor of $\frac{1}{6}$, i.e., shift the interval of the error from 
%$2^e \left( \frac{\mp1}{3}, \frac{\pm2}{3} \right)$ to $2^e \left( \frac{-1}{2}, \frac{1}{2} \right)$.  Note, that the error interval of $2^e \left( \frac{\mp1}{3}, \frac{\pm2}{3} \right)$ only applies when the assumptions from Lemma \ref{lemma:truncneg} hold. Otherwise, the truncation operator maps the value to 0, i.e., $t_{\cS}(x) = 0$ and adding(subtracting) $\frac{1}{6}$ would add additional bias. In \cite{Hammerling}, ZFP now has two new variants (1) ZFP$\_$ROUND where all transform coefficients are rounded by adding(subtracting) $\frac{1}{6}$ and (2) ZFP$\_$BETA where only the nonzero transform coefficients are rounded. The two variants have a similar error; however, ZFP$\_$BETA has a slightly lower compression ratio.  For brevity, we only present results using ZFP$\_$BETA variant. In this case, by adding(subtracting) $\frac{1}{6}$ before applying the decorelating transform the expected value of the error will instead be 
%\begin{align} 
% 2^{\ell}L^{-1}_d \left(\forward_d\right). 
%\end{align}

\subsection{Synthetic $4^d$ Blocks} 
\label{sec:unbiasedgeneratedBlock}
As in the first numerical test, we wish to mimic the worst possible input for ZFP, i.e., a not smooth, uncorrelated block of values that can not take advantage of the properties of the forward \teal{decorrelating} transform. 
Using the same setup as in \Cref{sec:generatedBlock}, we generate $4^d$ blocks of highly oscillatory elements with $d= 1$. 
We now compare experimental mean error from the generated blocks using the biased variant and the {\it postcompression} and {\it precompression} rounding variants of ZFP. \cref{fig:precomp} depicts the experimental mean error from all generated blocks using the {\it precompression} and original variant of ZFP and a side-by-side comparison of the biased and {\it precompression} rounding variant scaled by $\beta$, i.e., $\bar{x}_{i,\beta}(2^\beta)$,  \teal{for $\rho=14$, where $\rho$ is the dynamic range of values in a block defined by \cref{eqn:rho}.}
%, and the log ratio of the biased mean error divided by the {\it precompression} experimental mean error for $\rho=14$. 
Similar results are depicted for the {\it postcompression} rounding variant of ZFP in \cref{fig:postcomp}.  \cref{fig:precomp_new} and \cref{fig:postcomp_new} depict the same results but for when $\rho=0$.  Clearly, both rounding techniques have experimental mean errors that are orders of magnitude smaller than the biased variant. This is especially apparent as $\beta$ decreases. \cref{fig:precompression2d}, \cref{fig:postcompression2d}, \cref{fig:precompression3d}, and \cref{fig:postcompression3d} present the same results for when $d = 2$ and $d = 3$, respectively, and similar conclusions can be drawn. 
\vspace{5pt} 
 \begin{figure}
 	\centering
% 	\begin{subfigure}[b]{\textwidth}
% 		\includegraphics[width=\textwidth]{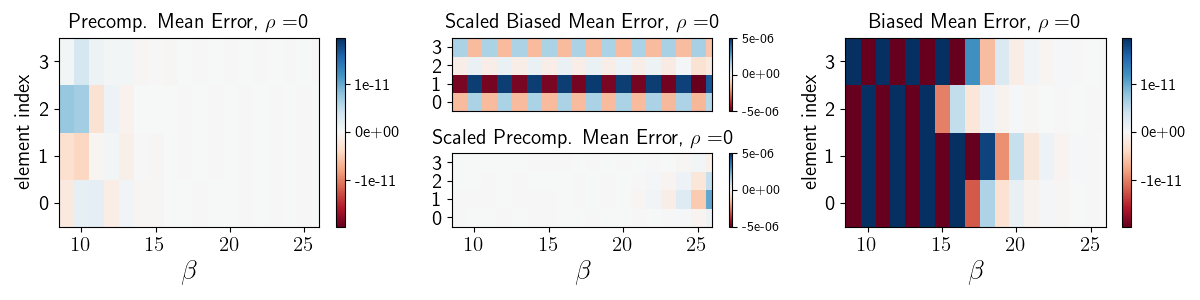}
%% 		    \caption{$\rho = 0$}
% 		    \label{fig:precompression_1rho0}
% 	\end{subfigure}
% 	\begin{subfigure}[b]{\textwidth}
% 		\includegraphics[width=\textwidth]{}
% 		    \caption{$\rho = 7$}
%\label{fig:precompression_1rho7}
% 	\end{subfigure}
 	\begin{subfigure}[b]{\textwidth}
 		\includegraphics[width=\textwidth]{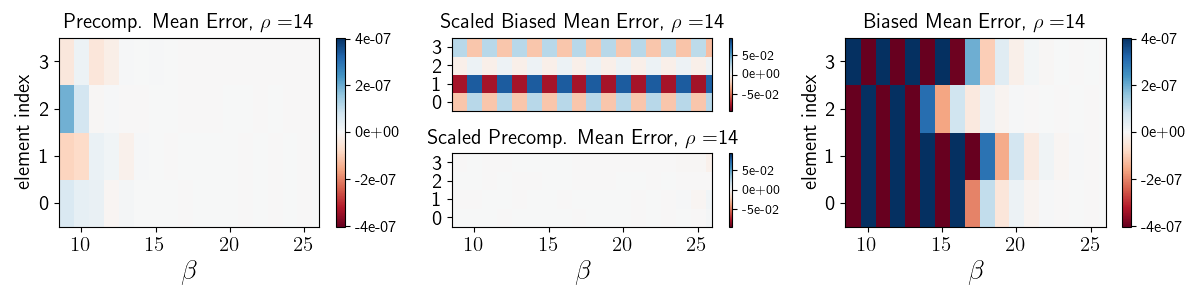}
% 		    \caption{$\rho = 14$}
\label{fig:precompression_1rho14}
 	\end{subfigure}

\caption{1-D Simulated {\it Precompression} Rounding Example: The left and right figures depict the unbiased and biased experimental mean error, respectively, using {\it precompression} and the original variant, while the middle figure shows a side-by-side comparison of the unbiased and biased scaled experimental mean error by $\beta$ for $\rho = 14$, where $\rho$ is the exponent range of values in a block defined by \cref{eqn:rho}.} 
 	\label{fig:precomp} 	
 \end{figure}

\vspace{5pt}
\begin{figure}
	\centering
%	\begin{subfigure}[b]{\textwidth}
%	\includegraphics[width=\textwidth]{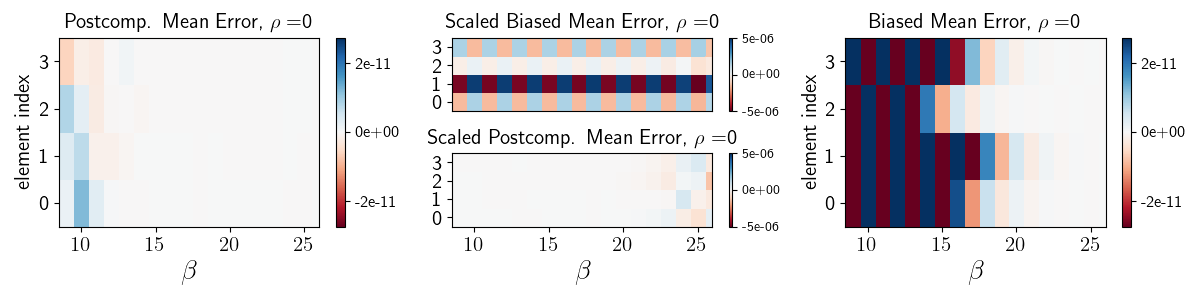}
%%	\caption{$\rho = 0$}
%	\label{fig:postcompression_1rho0}
%	\end{subfigure}
%	\begin{subfigure}[b]{\textwidth}
%	\includegraphics[width=\textwidth]{}
%	\caption{$\rho = 7$}
%	\label{fig:postcompression_1rho7}
%	\end{subfigure}
	\begin{subfigure}[b]{\textwidth}
	\includegraphics[width=\textwidth]{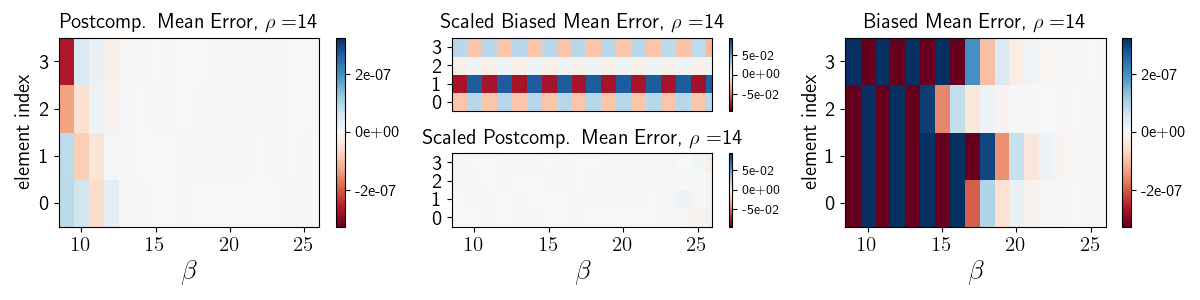}
%	\caption{$\rho = 14$}
	\label{fig:postcompression_1rho14f}
	\end{subfigure}
	
\caption{1-D Simulated {\it Postcompression} Rounding Example: The left and right figures depict the unbiased and biased experimental mean error, respectively, using {\it postcompression} and the original variant, while the middle figure shows a side-by-side comparison of the unbiased and biased scaled experimental mean error by $\beta$ for $\rho = 14$, where $\rho$ is the exponent range of values in a block defined by \cref{eqn:rho}.} 
	\label{fig:postcomp}
\end{figure}

\subsection{Climate Data Real-World Example} 
In this section, we repeat the experiments from \Cref{section:ncar} using the {\it precompression} and {\it postcompression} rounding  variants and compare them to the biased variant. The leftmost panels in \cref{fig:unbiased_ncar} present the \teal{mean compression error} of \teal{the} biased ZFP variant for $\beta = \{10,20\}$ while the middle  and right figures present the experimental mean error of the {\it precompression} variant and  {\it postcompression}  variant for each $\beta$. 
For each $\beta$ the magnitude of the mean error for the rounding variants is much smaller; however, one can see that as $\beta$ decreases, there is still indeed a bias with respect to the element index within the block\teal{, as predicted by our analysis in Equation \cref{eqn:biascorrected}.}
%This is due to the fact there is still a bias from the remaining ZFP steps. 
%However, the remaining bias is much smaller in magnitude. 
One interesting observation to note is the difference in the bias between the rounding schemes that can be seen in \cref{fig:unbiased_ncar} when $\beta = 10$. This difference can be explained by the difference in (ii) for \cref{lemma:truncneg_postcompression} and \cref{lemma:truncneg_precompression} when the leading bit is truncated. The {\it precompression} rounding variant is more resilient to the biasing effects as the assumptions in \cref{lemma:truncneg} are violated, and the addition(subtraction) of \teal{$\frac{1}{6} \Delta$} will cause bias for elements whose transform coefficients after truncation do not have a leading nonzero bit. 
\teal{This is because we generally expect $\mathbb{E}[a] = 0$, particularly for components other than the first one,  so any postcompression bias correction will introduce error.}
 \begin{figure}
	\centering
\begin{subfigure}[b]{\textwidth}
	\includegraphics[width=\textwidth]{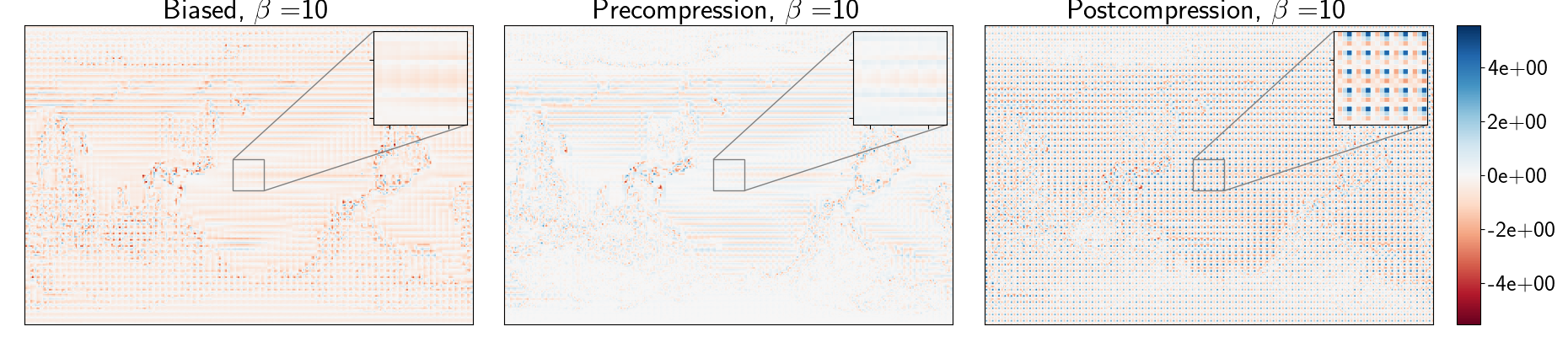}
%	\caption{$\beta = 10$}
	\label{fig:unbiasedncar10}
\end{subfigure}
\begin{subfigure}[b]{\textwidth}
	\includegraphics[width=\textwidth]{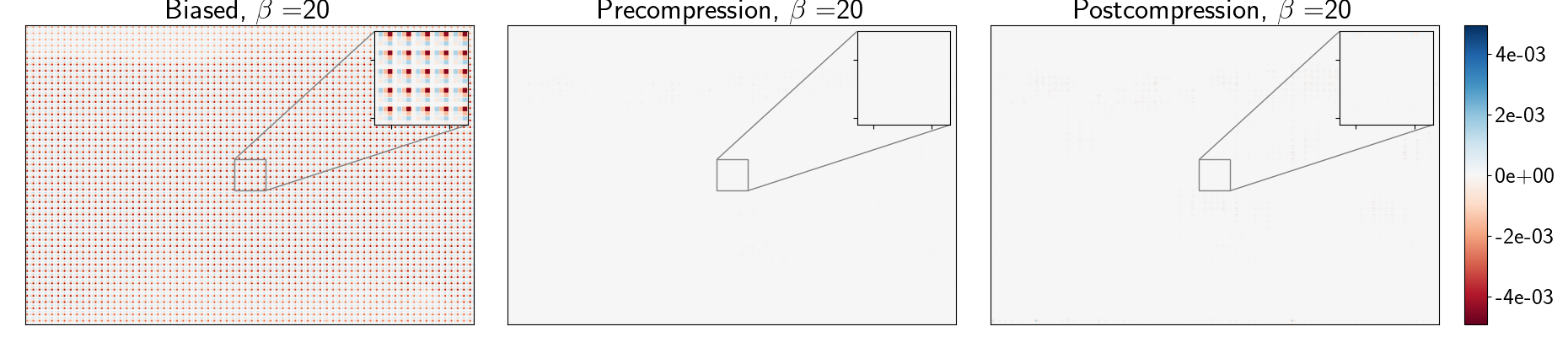}
%	\caption{$\beta = 20$}
	\label{fig:unbiasedncar20}
\end{subfigure}
	\caption{Climate Data Numerical Example: Each row depicts biased experimental mean error using ZFP 1.0.x, the unbiased experimental mean error using the {\it precompression} variant, and the unbiased experimental mean error using the {\it postcompression} variant for $\beta = \{ 10,20\}$, where $\beta$ is the number of bit planes kept in Step 8.}
	\label{fig:unbiased_ncar}
\end{figure}

\subsection{Autocorrelation Analysis} 
An additional quantity of interest that pertains to bias is the absence of autocorrelation in the error field. Autocorrelation is the correlation of a signal with a delayed copy of itself as a function of the delay. In 1-d, the delay is also known as a horizontal lag. 
%Note that \cref{fig:unbiased_ncar_auto} and \cref{fig:autocor_beta_both} used only the first 368 days to produce the resulting figures.  
\cref{fig:unbiased_ncar_auto} shows \teal{a 2-D slice of the 3-D autocorrelation function, $\mathcal{R}( \delta )$, computed for the compression error, corresponding to $\Delta t = 0$ (i.e., zero time lag) } for each rounding variant and the biased variant with zero lag. 
\teal{Here, $ \delta $ denotes the vector of integer lags in each dimension.}
In other words, the source \red{data is treated as a 3-d field}, with time on the z-axis and the autocorrelation function is computed with a zero lag in the time dimension. 
%Using the Wiener-Khinchin theorem, we calculate the autocorrelation function using fast Fourier transforms (FFT) and the inverse fast Fourier transform (IFFT), i.e., 
%	\begin{align*}
%g &= (\delta - \mu(\delta)) / \sigma(\delta)  \quad \text{\% normalization },\\
%h &= FFT(g)  \quad \text{\% fast Fourier transform },\\ 
%s &= |h|^2  \quad \text{\% squared magnitude },\\ 
%  R(\delta) &= IFFT(s) \quad \text{\% inverse Fourier transform},
%	\end{align*}
%	where $\mu(\delta)$ and $\sigma(\delta)$ are the mean error and standard deviation of the error, respectively.
	Ideally, the autocorrelation function is a Dirac delta function at the center of the field with zero elsewhere. The optimal autocorrelation function occurs if there is no correlation between the error values and their neighboring values. 
%	\cref{fig:biased_ncar30_auto}-\cref{fig:pre_ncar30_auto} represent the three different rounding schemes for $\beta=30$. 
The center pixel of each \teal{2-D slice has a value of one}, and each corresponding pixel quickly \teal{decays} to near zero, approximating the optimal autocorrelation function. 
As the precision increases to $\beta = 20$, depicted in the bottom row of \cref{fig:unbiased_ncar_auto},  the autocorrelation function for the {\it post}- and  {\it precompression} variants remain ideal, while the biased autocorrelation function begins to degrade. When $\beta = 10$, depicted in the top row of \cref{fig:unbiased_ncar_auto}, the \red{autocorrelation} function for the {\it postcompression} variant degrades. 
This is \red{again due} to the  difference in (ii) for \cref{lemma:truncneg_postcompression} and \cref{lemma:truncneg_precompression} when the leading \teal{one-}bit is truncated. The {\it postcompression} rounding variant violates  assumptions in \cref{lemma:truncneg}, and the addition(subtraction) of \teal{$\frac{1}{6} \Delta$} will cause bias for elements whose transform coefficients do not have a leading nonzero bit. Lastly, \cref{fig:autocor_beta_both} depicts the 2-norm of the autocorrelation function, $\|R(\delta)\|$, as a function of the precision, $\beta$, on a \teal{linear} and log scale for each rounding variant. 
Note that \cref{fig:unbiased_ncar_auto} and \cref{fig:autocor_beta_both} used only the first 368 days to produce the figures.  
%	Additionally, the calculation of  $\|R(\delta)\|$ ignores the value of $R$ with no delay.

\vspace{5pt}
\begin{figure}
	\centering
	\begin{subfigure}[b]{\textwidth}
		\includegraphics[width=\textwidth]{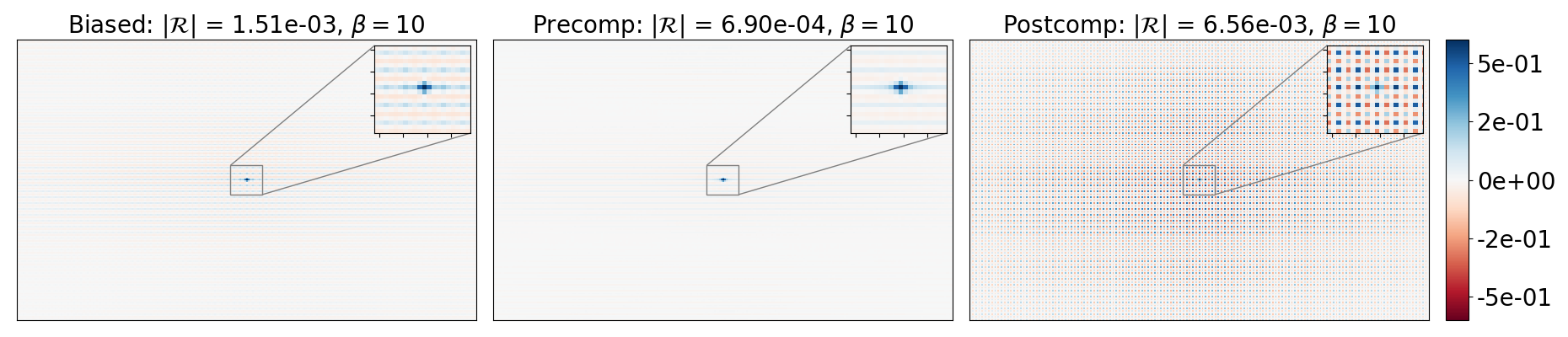}
%		\caption{$\beta =10$}
		\label{fig:ncar10_auto}
	\end{subfigure}
	\begin{subfigure}[b]{\textwidth}
		\includegraphics[width=\textwidth]{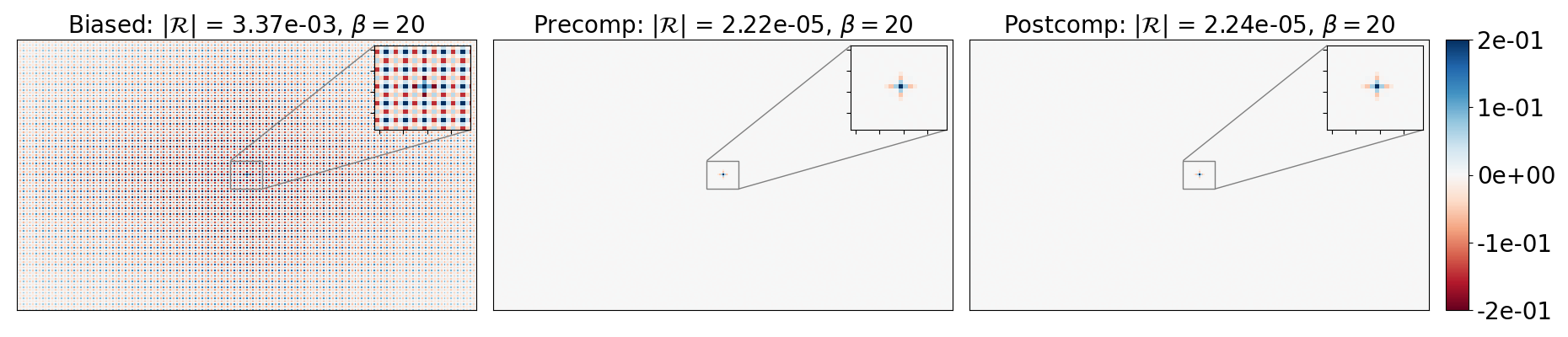}
%		\caption{$\beta =20$}
		\label{fig:ncar20_auto}
	\end{subfigure}	
	\caption{Climate Data Numerical Example: Each figure depicts the 2-d slice of the autocorrelation function \teal{$\mathcal{R}( \delta  )$} of the mean error for $\beta = \{10,20\}$, i.e., the $\Delta t = 0$ slice of the 3-d autocorrelation field, using ZFP 1.0.x, the {\it precompression} variant, and the {\it postcompression} variant, respectively.} 
	\label{fig:unbiased_ncar_auto}
\end{figure}

\vspace{5pt}
\begin{figure}
	\centering
	\begin{subfigure}[b]{.8\textwidth}
		\includegraphics[width=\textwidth]{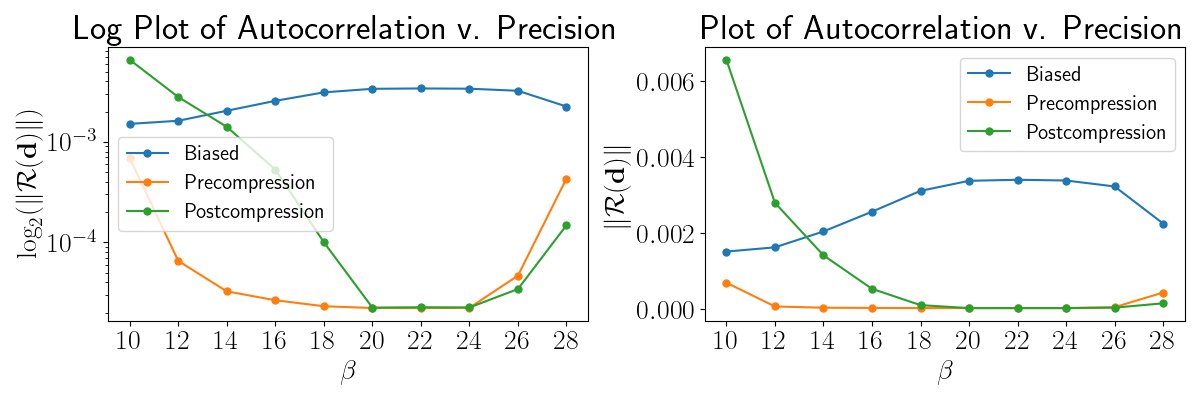}
		\label{fig:autocor_beta}
	\end{subfigure}
	\caption{Climate Data Numerical Example: Figure (\red{right}) depicts the 2-norm of the autocorrelation, $\|\teal{\mathcal{R}(\delta )}\|$, of the mean error as a function of precision, $\beta$. Figure (\red{left}) \teal{plots the same function on a vertical log axis.}}
%depicts the log-scale of the 2-norm of the autocorrelation, $\|R(\delta)\|$, of the mean error as a function of precision, $\beta$.  }
	\label{fig:autocor_beta_both}
\end{figure}

\section{Empirical Error Distributions}
\label{sec:distribution}

We conclude our experiments with an investigation of how the compression errors due to quantization in Step~8 are distributed, both theoretically and empirically.
By far, these tend to be the dominant source of error in ZFP.
Following our assumption that discarded trailing bits of transform coefficients are uniformly random, quantization errors are thus uniform either on $\left(-\frac{2}{3}, \frac{1}{3}\right)$~ulps (unit in the last place) or $\left(-\frac{1}{3}, \frac{2}{3}\right)$~ulps.
\teal{Here, we define 1 ulp ($\Delta$) as the magnitude of the least significant bit of the quantized representation.}
The linear decorrelating inverse transform gives a weighted average of these uniform error terms that tends toward Gaussian distributions, as previously observed in~\cite{lindstromdistrib}, and in one dimension is piecewise cubic.
The closed-form expressions are easily found via convolution and are presented in \Cref{sec:diststats}.
Due to the negabinary quantization errors being biased and because of the slight nonorthogonality of the decorrelating transform, the actual error distribution in one dimension ($d = 1$) varies spatially with index $i \in \{1, 2, 3, 4\}$ within the block, and each of the four distributions gives rise to either positive or negative bias.
As discussed earlier, this bias can be corrected using proper rounding, e.g., by offsetting coefficients before truncating them.
\vspace{5pt}
\begin{figure}[tp]
%\hfill%
\centering%
\hspace{0.5in}\includegraphics[width=0.5\linewidth]{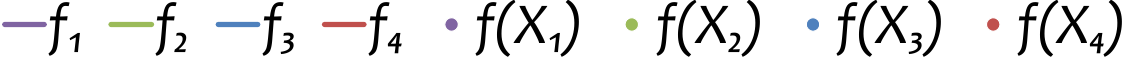}%
\\[2ex]%
\includegraphics[height=2in]{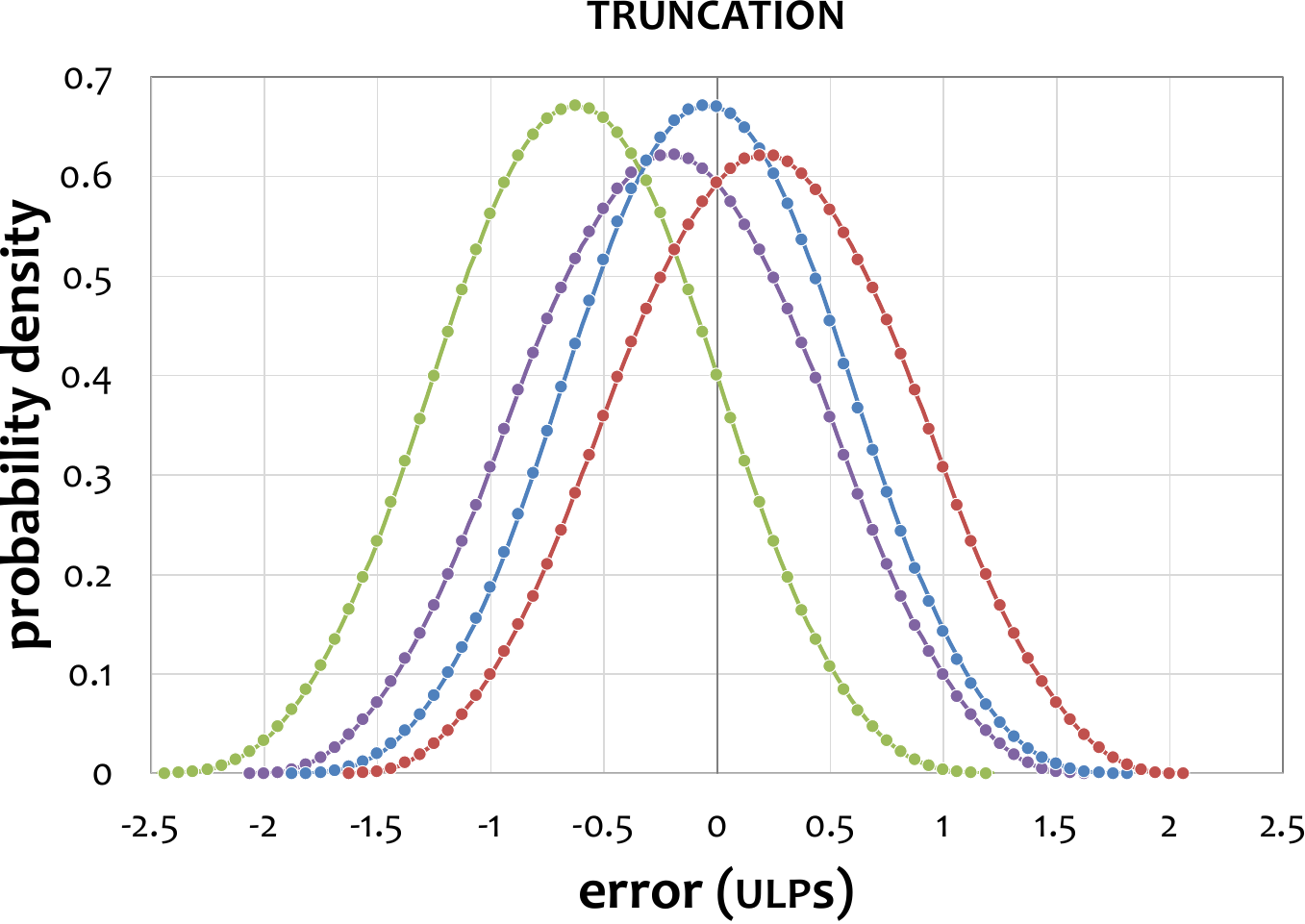}%
\hfill%
\includegraphics[height=2in]{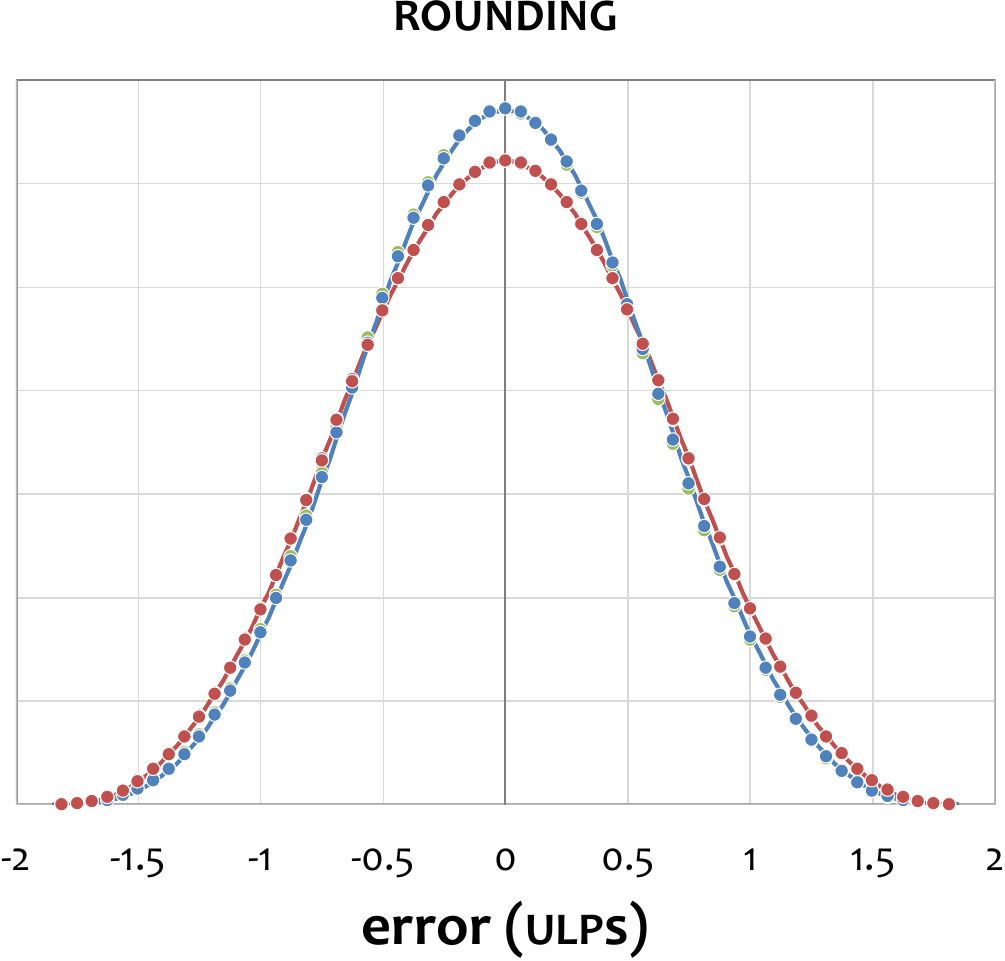}%
%\hfill%
%\mbox{}%
\caption{Error distributions due to coefficient truncation (left) and rounding (right) for 1D ZFP compression.
The four distributions each correspond to random variables associated with one of four spatial locations within a block.
%Curves correspond to what theory predicts, dots to observed distributions, which align remarkably well with theory.
%Curves correspond to what theory predicts, dots to observation, which aligns remarkably well with theory.
\teal{The empirical distributions (shown as dots) align remarkably well with what theory predicts (curves).}
}
\label{fig:distributions}
\end{figure}

To compare theoretical and observed error distributions, we performed an experiment using the climate data first described in \Cref{section:ncar}.
This data set is composed of $31,390$ daily averages of surface temperature on a $288 \times 192$ lat/lon grid.
Because the fastest-varying dimension is a multiple of four, we may simply reshape the data as a one-dimensional \teal{vector such that each block corresponds} %, with each block corresponding
to four adjacent grid points at the same latitude.
%We compressed the data using \zfp's fixed-accuracy mode with a tolerance of $2^{-8}$, both with bias and with precompression rounding.
Because the sign of the error depends on the parity of number of truncated bits, as given by the error tolerance and per-block common exponent, we considered only those blocks whose maximum value fell in $[2^8, 2^9)$~kelvins, which make up just over 80\% of all blocks.

\Cref{fig:distributions} shows excellent agreement between theory and observation, both with and without bias correction, as the dots (empirical densities given by the ratio of bin probability to bin width) coincide with the curves (theoretical densities).
This figure and \Cref{tab:diststats} further validate the efficacy of our bias correction scheme, as the observed error distributions have zero mean.
It can be seen that the biased distributions vary both in their position and shape (e.g., amplitude and variance), with $\bigvecx_1$ and $\bigvecx_4$ distributions having the same shape, and similarly for $\bigvecx_2$ and $\bigvecx_3$.
As detailed further in \Cref{sec:diststats}, $\bigvecx_i$ mean and variance are governed by the row sums and norms of the inverse decorrelating transform $L^{-1}$.
A transform $L$ with orthogonal rows (with $\frac{1}{16} \begin{bmatrix}6 & 2 & -2 & -6\end{bmatrix}$ as second row) coupled with bias correction would result in i.i.d.\ distributions, albeit at the expense of higher computational cost.

\section{Conclusion}
\label{sec:conclusion}
In this paper, we analyzed the bias of the error introduced in the use of lossy ZFP compression of floating-point data. This paper's significant contribution is the theoretical proof of the bias using the \teal{compression operators, which refer to the individual steps of the ZFP algorithm, as defined in Diffenderfer \cite{errorzfp}. These operators act on the bit vector space $\cB^n$, allowing us} to critically analyze the bias resulting from each step of the ZFP algorithm \teal{as well as the composite operator, which encompasses all compression steps.} \Cref{sec:results} presented numerical experiments to test the accuracy of the theoretical bias in a simulated example as well as a real-world example. \Cref{sec:correction} presented two correction methods \teal{to cancel the bias} that involve a simple rounding step. The {\it postcompression} rounding variant was first introduced in \cite{Hammerling}, while the more effective {\it precompression} variant is introduced in this paper. We note that the {\it precompression} variant can only be applied when the number of bit-planes during truncation is known, i.e., this mode is unavailable to the fixed-rate mode of ZFP.
\blue{While our focus was specifically on ZFP, we acknowledge that bias is a known issue in other compression algorithms as well, as discussed in \cite{lindstromdistrib} and other studies. Our work is informed by this broader context, and we are aware that similar biases may exist in other algorithms.  That said, our primary aim was to provide a detailed analysis of ZFP, an algorithm whose bias has not been extensively covered in the literature.}

 The theoretical bias determined in this paper is limited by the assumptions on the input distributions. \teal{While these limitations exist, our results provide a framework for assessing whether the magnitude of the bias will significantly impact the conclusions of a statistical analysis, depending on the application.}
% 	For example, in \Cref{section:ncar}, we observed that the theoretical and experimental mean errors align well along the equator in Figure 4. 
% 	However, deviations appear near the poles, where the temperature drops below $256~\text{K}$, leading to a binary exponent boundary that violates the theoretical assumptions in \cref{thm:bias}.} 
% However, depending on the application, our result can determine if the bias's resulting magnitude will affect the conclusion of the statistical analysis. 
% An example of the limitations of this work can be seen in \Cref{section:ncar}. 
% The theoretical and experimental mean error line up extremely well along the equator. Still, as we reach the boundary of the poles, we begin to see deviations from the expected and experimental mean errors. 
% We theorize that this poor \red{matching} is due to the temperature dropping below $256~\text{K}$, causing a binary exponent boundary near the poles, violating theoretical assumptions in \cref{thm:bias}. 
% We theorize that this poor matching is due to the average temperature distribution around the poles being closer to a uniform distribution with a small variance over the studied period, which can cause the predicted bias to be over or underestimated depending on the location of the leading truncated negabinary bit. 
 Even with this deviation from the predicted expected error, \cref{thm:bias} indicates the worst-case scenario, and the application can determine if the magnitude of the bias is acceptable. 
\teal{If the magnitude of the bias is not acceptable, the correction methods introduced in \Cref{sec:correction} offer practical solutions. These methods, {\it postcompression} and {\it precompression} rounding, build directly on the theoretical framework developed in this paper and demonstrate how the bias can be drastically reduced. }
 We modify \cref{lemma:truncneg} for each scheme and demonstrate that the bias can be drastically reduced. 
 We also showed that  {\it precompression} rounding is more resilient to the biasing effect as the assumptions in \cref{thm:bias} are no longer satisfied.
 \teal{By tying these correction methods to the theoretical analysis, this paper provides both a detailed understanding of the bias and actionable tools for mitigating it.
 	While there is room for further refinement of the theoretical framework, this paper provides a solid foundation for understanding and mitigating bias in ZFP compression.}

Our results show that it is indeed possible to statistically analyze the error caused by a compression algorithm. Our analysis is the first of its kind to attempt this theoretical approach. \blue{We hope that by using the vector space, $\cB^n$, additional research can further our understanding of the error caused by finite bit representations, including those introduced by other compression algorithms.}

\section*{Acknowledgments}
This work was performed under the auspices of the U.S. Department of Energy by
Lawrence Livermore National Laboratory under Contract
DE-AC52-07NA27344.
% PL: Funding sources appear on the title page
% and was supported by the Office of Science, Office of Advanced Scientific Computing Research

\newpage
\bibliographystyle{siamplain}
\bibliography{compression}

\begin{thebibliography}{10}

\bibitem{Baker}
{\sc A.~H. Baker, D.~M. Hammerling, S.~A. Mickelson, H.~Xu, M.~B. Stolpe,
  P.~Naveau, B.~Sanderson, I.~Ebert-Uphoff, S.~Samarasinghe, F.~De~Simone,
  F.~Carbone, C.~N. Gencarelli, J.~M. Dennis, J.~E. Kay, and P.~Lindstrom},
  {\em Evaluating lossy data compression on climate simulation data within a
  large ensemble}, Geoscientific Model Development, 9 (2016), pp.~4381--4403,
  \url{https://doi.org/10.5194/gmd-9-4381-2016}.

\bibitem{sz}
{\sc S.~Di and F.~Cappello}, {\em Fast error-bounded lossy {HPC} data
  compression with {SZ}}, in 2016 IEEE International Parallel and Distributed
  Processing Symposium (IPDPS), May 2016, pp.~730--739,
  \url{https://doi.org/10.1109/IPDPS.2016.11}.

\bibitem{errorzfp}
{\sc J.~Diffenderfer, A.~L. Fox, J.~A. Hittinger, G.~Sanders, and P.~G.
  Lindstrom}, {\em Error analysis of {ZFP} compression for floating-point
  data}, SIAM Journal on Scientific Computing, 41 (2019), pp.~A1867--A1898,
  \url{https://doi.org/10.1137/18M1168832}.

\bibitem{forsight}
{\sc P.~Grosset, C.~M. Biwer, J.~Pulido, A.~T. Mohan, A.~Biswas, J.~Patchett,
  T.~L. Turton, D.~H. Rogers, D.~Livescu, and J.~Ahrens}, {\em Foresight:
  Analysis that matters for data reduction}, in SC20: International Conference
  for High Performance Computing, Networking, Storage and Analysis, 2020,
  pp.~1--15, \url{https://doi.org/10.1109/SC41405.2020.00087}.

\bibitem{Hammerling}
{\sc D.~Hammerling, A.~Baker, A.~Pinard, and P.~Lindstrom}, {\em A
  collaborative effort to improve lossy compression methods for climate data},
  in IEEE/ACM 5th International Workshop on Data Analysis and Reduction for Big
  Scientific Data (DRBSD-5), Nov. 2019, pp.~16--22,
  \url{https://doi.org/10.1109/DRBSD-549595.2019.00008}.

\bibitem{higham2002accuracy}
{\sc N.~Higham}, {\em Accuracy and Stability of Numerical Algorithms}, Society
  for Industrial and Applied Mathematics, 2~ed., 2002,
  \url{https://doi.org/10.1137/1.9780898718027}.

\bibitem{Knuth}
{\sc D.~E. Knuth}, {\em The Art of Computer Programming, Volume 2 (3rd Ed.):
  Seminumerical Algorithms}, Addison-Wesley Longman Publishing Co., Inc.,
  Boston, MA, USA, 1997.

\bibitem{Krasowska}
{\sc D.~Krasowska, J.~Bessac, R.~Underwood, J.~C. Calhoun, S.~Di, and
  F.~Cappello}, {\em Exploring lossy compressibility through statistical
  correlations of scientific datasets}, in 7th International Workshop on Data
  Analysis and Reduction for Big Scientific Data (DRBSD-7), 2021, pp.~47--53,
  \url{https://doi.org/10.1109/DRBSD754563.2021.00011}.

\bibitem{Laney}
{\sc D.~{Laney}, S.~{Langer}, C.~{Weber}, P.~{Lindstrom}, and A.~{Wegener}},
  {\em Assessing the effects of data compression in simulations using
  physically motivated metrics}, in International Conference on High
  Performance Computing, Networking, Storage and Analysis, Nov. 2013,
  pp.~1--12, \url{https://doi.org/10.1145/2503210.2503283}.

\bibitem{zfp}
{\sc P.~Lindstrom}, {\em Fixed-rate compressed floating-point arrays}, IEEE
  Transactions on Visualization and Computer Graphics, 20 (2014),
  pp.~2674--2683, \url{https://doi.org/10.1109/TVCG.2014.2346458}.

\bibitem{lindstromdistrib}
{\sc P.~Lindstrom}, {\em Error distributions of lossy floating-point
  compressors}, Tech. Report LLNL-CONF-740547, Lawrence Livermore National
  Laboratory, Oct. 2017.
\newblock \url{https://www.osti.gov/biblio/1526183}.

\bibitem{zfp-doc}
{\sc P.~Lindstrom and D.~Asher}, {\em {ZFP} version~1.0.1}, Dec. 2023.
\newblock \url{https://github.com/LLNL/zfp}.

\bibitem{Liu}
{\sc Y.~Liu, S.~Di, K.~Zhao, S.~Jin, C.~Wang, K.~Chard, D.~Tao, I.~Foster, and
  F.~Cappello}, {\em Optimizing error-bounded lossy compression for scientific
  data with diverse constraints}, IEEE Transactions on Parallel and Distributed
  Systems, 33 (2022), pp.~4440--4457,
  \url{https://doi.org/10.1109/TPDS.2022.3194695}.

\bibitem{MitrablockRoundingError}
{\sc A.~Mitra}, {\em On finite wordlength properties of block-floating-point
  arithmetic}, International Journal of Electrical, Computer, Energetic,
  Electronic and Communication Engineering, 2 (2008), pp.~1709--1714,
  \url{https://doi.org/10.5281/zenodo.1070783}.

\bibitem{zchecker}
{\sc D.~Tao, S.~Di, H.~Guo, Z.~Chen, and F.~Cappello}, {\em Z-checker: A
  framework for assessing lossy compression of scientific data}, International
  Journal of High Performance Computing Applications, 33 (2017),
  \url{https://doi.org/10.1177/1094342017737147}.

\bibitem{Treib}
{\sc M.~Treib, K.~B{\"u}rger, J.~Wu, and R.~Westermann}, {\em Analyzing the
  effect of lossy compression on particle traces in turbulent vector fields},
  in 6th International Conference on Information Visualization Theory and
  Applications, 2015, pp.~279--288,
  \url{https://doi.org/10.5220/0005307202790288}.

\bibitem{Wegener}
{\sc A.~Wegener}, {\em Universal numerical encoder and profiler reduces
  computing's memory wall with software, {FPGA}, and {SoC} implementations}, in
  IEEE Data Compression Conference, 2013, p.~528,
  \url{https://doi.org/10.1109/DCC.2013.107}.

\end{thebibliography}

\newpage
\setcounter{page}{1}
\appendix

\section{Uniformly Random Bits}
\label{apd:random}

In \Cref{section:biastools}, it was assumed that the trailing bits after the leading non-zero bit in a negabinary representation are uniformly random, i.e., each trailing bit has an equal probability of being either a zero or a one.
	Specifically, we are interested in understanding the distributions of the trailing bits for each of the transform coefficients. 
	To validate this assumption for the transform coefficients, we empirically tested our theory. In the following, the data set that is used was formed by sampling 32 thousand 3D blocks from 32 different data sets, resulting in over 1 million total blocks. The sample data sets are from various scientific simulations. 
	Each block was then compressed by Step~2 through Step~5 so that each transform coefficient is ordered by total sequency and represented in negabinary. 
	\cref{fig:precent10} and \cref{fig:precent57} each 
represent a transform coefficient length of \teal{at most} 10 and 57, respectively, i.e., the transform coefficient $a \in \cN$ has a length of $10$, \teal{meaning $0 \leq a \leq 2^{10} - 1$, such that $\max{\cI(a)} \leq 9$ and $\min{\cI(a)} \geq 0$} 
Each column represents a coefficient index. As we are studying 3D blocks, there are 64 coefficients with a starting index of 0. 
The rows represent the trailing bits, with the least significant bits at the top. 
The color map and value represent the percentage that \teal{$a_i$ (where $i$ denotes the $i$-th row of the color map)} is a one-bit. 
It can be seen in \cref{fig:precent10} that the \teal{most} significant bits have a much higher probability of being zero. 
Due to the block-floating point transform in Step~2, there is a high probability that the inputs into the transformation have trailing zeros. 
This is due to the precision differences between the input data type and the block floating point representation, i.e., we typically have $q>k$. 
The transformation propagates the zero bits through arithmetic operations. However, if the block has a small dynamic range, it is likely that not all the trailing zero bits will be operated on. 
Thus, the least significant bits have a high probability of being zero. 
As the width of the coefficient increases, this phenomenon is less likely, as can be seen in \cref{fig:precent57}. 
Our assumption in Step~8 is that at least $2d$ bit planes are discarded, removing these bits from the analysis.
From our empirical results, we assume for our analysis that the resulting bits that could be truncated are uniformly random. 

\begin{figure}[h!]
	\centering
	\begin{subfigure}[b]{0.4\textwidth}
	\includegraphics[width=\textwidth]{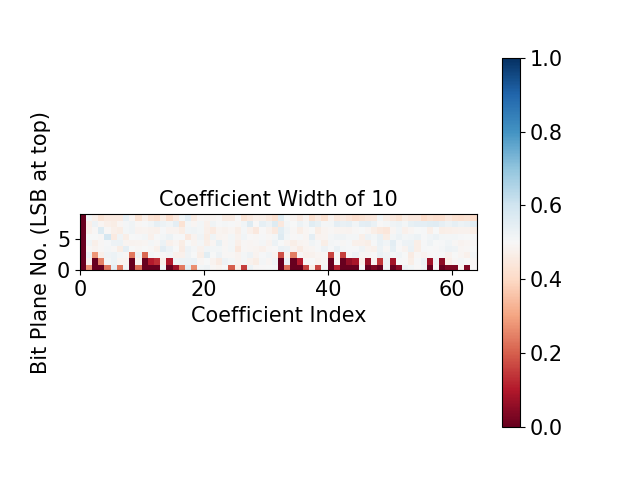}
		\caption{Coefficient Width of 10}
	\label{fig:precent10}
	
	\end{subfigure}
	\begin{subfigure}[b]{0.4\textwidth}
		\includegraphics[width=\textwidth]{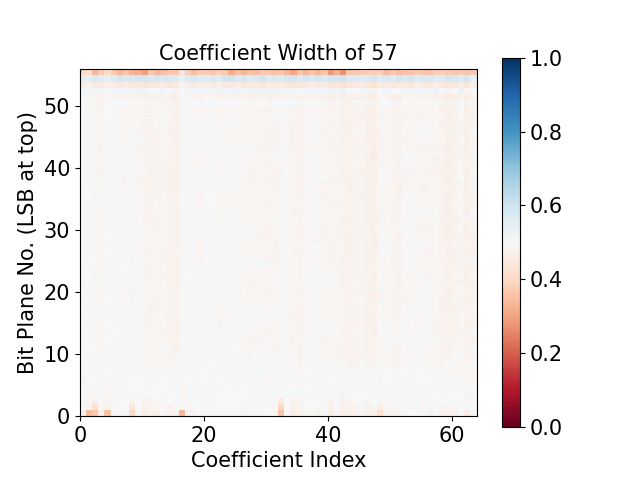}
	\caption{Coefficient Width of 57}
\label{fig:precent57}

\end{subfigure}
%	\begin{subfigure}[b]{0.4\textwidth}
%		\includegraphics[width=\textwidth]{figures/aggregated_scatter_plot.png}
	\caption{The row of each color-map represents the trailing bits, with the most significant bits at top, while each column represents a coefficient index.
		The color map and value represent the percentage that the transform coefficient is a one-bit. }
%\label{fig:scatter_precent_agg}
%\end{subfigure}
\end{figure}
%
% \begin{figure}[h!]
%	\centering
%
%	\includegraphics[width=.5\textwidth]{figures/precent_10.pdf}
%	\caption{Coefficient Width of 10}
%	\label{fig:precent10}
%
%\end{figure}
%
% \begin{figure}[h!]
%	\centering
%		\includegraphics[width=.5\textwidth]{figures/precent_57.pdf}
%	\caption{Coefficient Width of 10}
%\label{fig:precent10}
%
%\end{figure}
%
% \begin{figure}[h!]
%	\centering
%	\begin{subfigure}[b]{.5\textwidth}
%		\includegraphics[width=\textwidth]{figures/aggregated_scatter_plot.pdf}
%		\caption{}
%		\label{fig:scatter_precent_agg}
%	\end{subfigure}
%	
%	\caption{}
%	\label{fig:scatter_precent}
%\end{figure}

\section{Demonstration of \cref{lemma:expectedtransform1d}}
\label{sec:demoEd}
In this section, we demonstrate the validity of  \Cref{lemma:expectedtransform1d} to predict the estimated error caused by the lossy transform operator. 
Define the distribution ${\bf X }$ such that $\{\bx \in \mathbb{R}^4\} \teal{\ \sim \ } {\bf X }$ is a vector of integers whose elements are drawn from a uniform distribution $\mathcal{U}{(-2^{30}, 2^{30})}$. Let ${\bf A}$ be a uniform distribution that maps the elements of ${\bf X}$ to the infinite bit-vector space, i.e.,  for every  $\bx \teal{\ \sim\ } {\bf X}$ we have $\ba \teal{\ \sim \ } {\bf A}$ such that  $\bx = F_\cB(\ba)$. 
\cref{fig:Delta} demonstrates the accuracy of our defined $\forward_1$. 
The y-axis depicts the probability mass.
The bottom row is the experimental distribution of the error from 10,000 trials.  
For each $\ba \teal{\ \sim \ } {{{\bf A}}}$, the lossy and lossless forward transform operator is applied, and the difference for each element is stored. 
The bottom row depicts the histogram of the  distribution of the error for each component from 10,000 trials.  
The top row depicts the exact distribution as defined in \cref{eqn:transformlossyerror}. 
As can be seen, the experimental distribution follows the exact distribution as defined by \cref{lemma:expectedtransform1d}. 
\teal{Similarly, \cref{fig:LinvDelta} demonstrates the error resulting from the composite process of applying the forward transform followed by the inverse transform for both the lossy and lossless cases.}
The bottom row is the experimental distribution of the error from 10,000 trials and the top row depicts the exact distribution. 
\vspace{5pt}
\begin{figure}
	\centering
	\begin{subfigure}[b]{0.24\textwidth}
		\includegraphics[width=\textwidth]{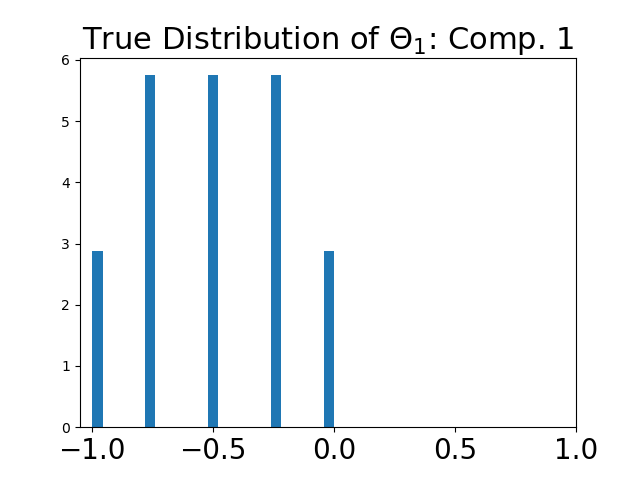}
		%    \caption{$\beta = 64$}
	\end{subfigure}
	\begin{subfigure}[b]{0.24\textwidth}
		\includegraphics[width=\textwidth]{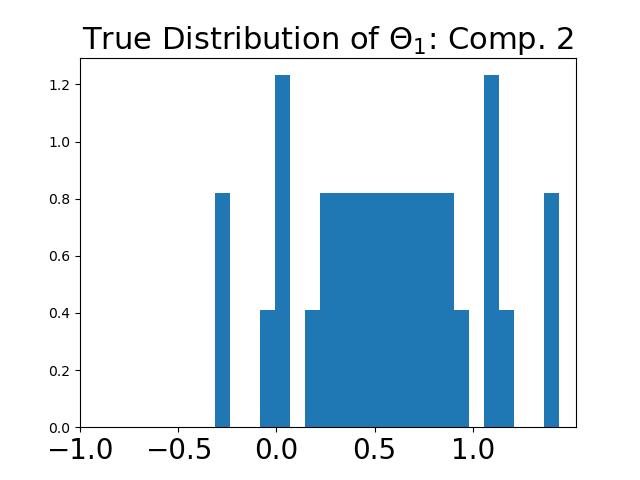}
		%		    \caption{$\beta = 64$}
	\end{subfigure}
	\begin{subfigure}[b]{0.24\textwidth}
		\includegraphics[width=\textwidth]{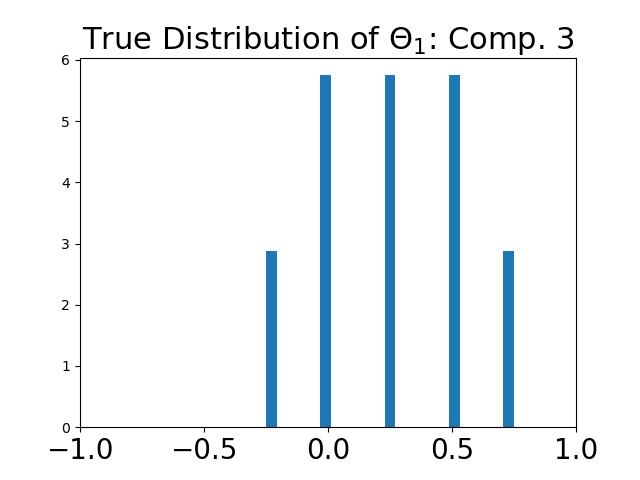}
		%    \caption{$\beta = 64$}
	\end{subfigure}
	\begin{subfigure}[b]{0.24\textwidth}
		\includegraphics[width=\textwidth]{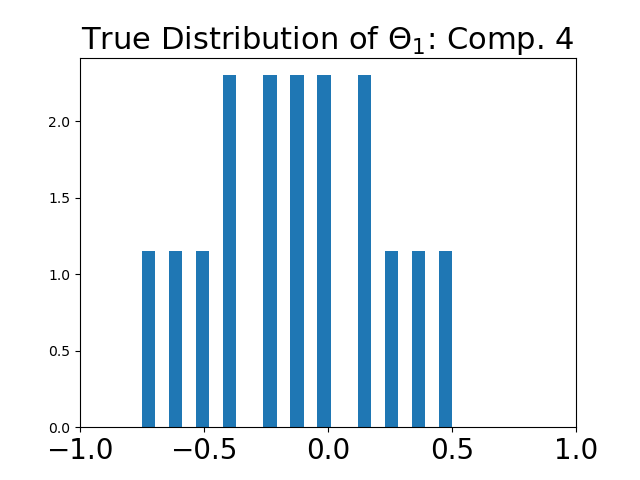}
		%    \caption{$\beta = 64$}
	\end{subfigure}
	\begin{subfigure}[b]{0.24\textwidth}
		\includegraphics[width=\textwidth]{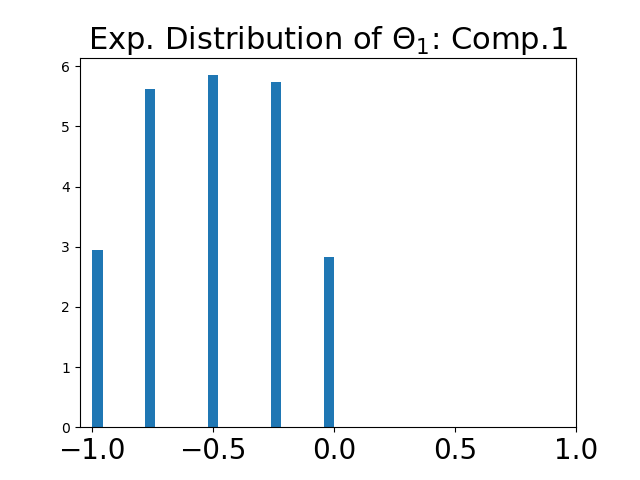}
	\end{subfigure}
	\begin{subfigure}[b]{0.24\textwidth}
		\includegraphics[width=\textwidth]{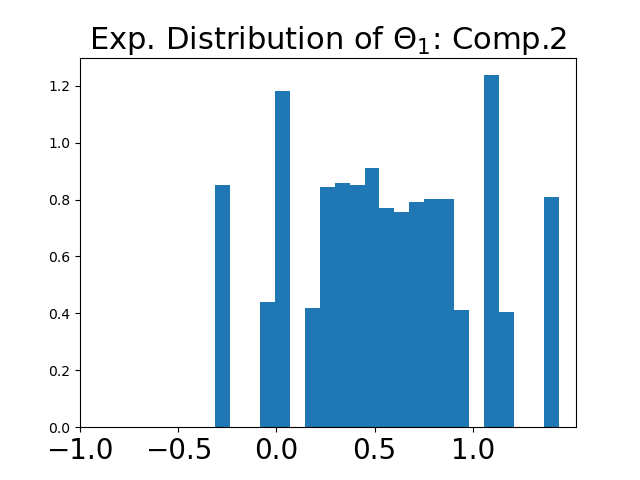}
	\end{subfigure}
	\begin{subfigure}[b]{0.24\textwidth}
		\includegraphics[width=\textwidth]{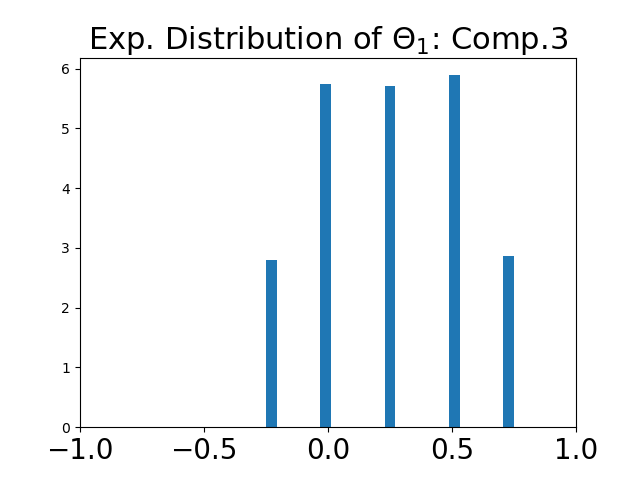}
	\end{subfigure}
	\begin{subfigure}[b]{0.24\textwidth}
		\includegraphics[width=\textwidth]{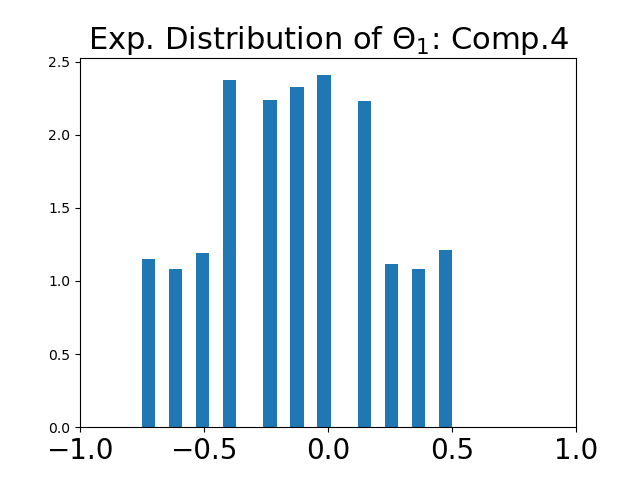}
	\end{subfigure}

	\caption{Demonstration of $\forward_1$. 
		The top row depicts the true distribution of the error as defined in \cref{eqn:transformlossyerror} for each element $i = \{1,2,3,4\}$, from left to right. 
		The bottom row depicts the experimental error distribution from 10,000 trials for each element, respectively.}
	\label{fig:Delta}
\end{figure}
\vspace{5pt}
\begin{figure}
	\centering
	\begin{subfigure}[b]{0.24\textwidth}
		\includegraphics[width=\textwidth]{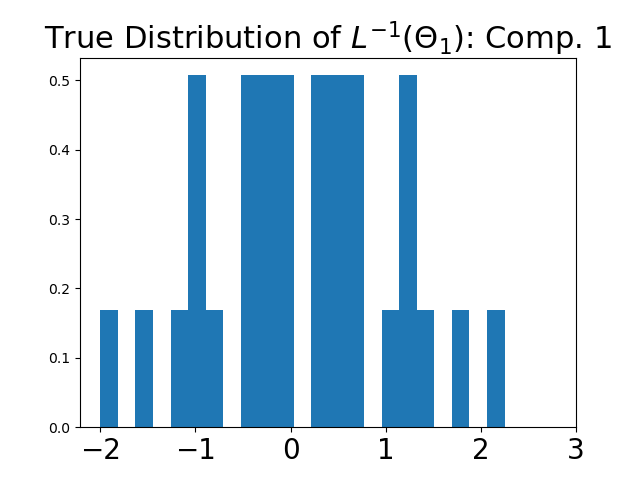}
		%    \caption{$\beta = 64$}
	\end{subfigure}
	\begin{subfigure}[b]{0.24\textwidth}
		\includegraphics[width=\textwidth]{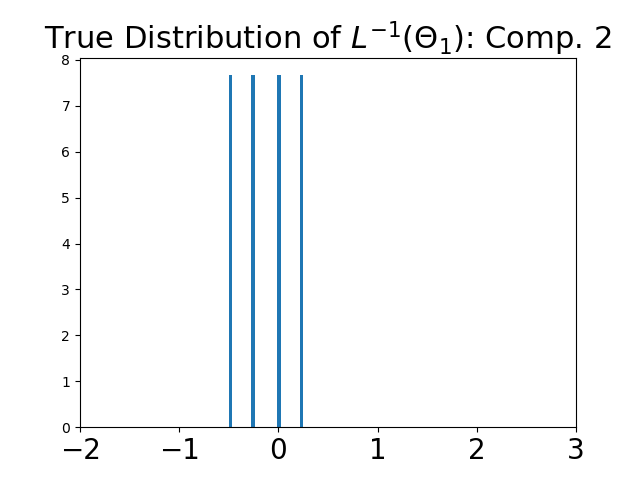}
		%    \caption{$\beta = 64$}
	\end{subfigure}
	\begin{subfigure}[b]{0.24\textwidth}
		\includegraphics[width=\textwidth]{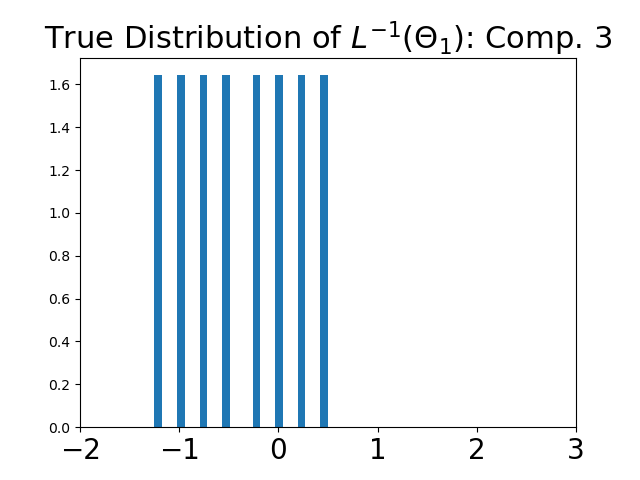}
		%    \caption{$\beta = 64$}
	\end{subfigure}
	\begin{subfigure}[b]{0.24\textwidth}
		\includegraphics[width=\textwidth]{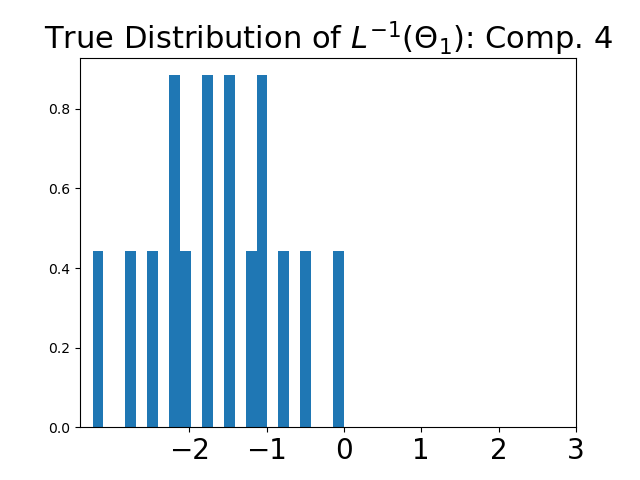}
		%    \caption{$\beta = 64$}
	\end{subfigure}
	\begin{subfigure}[b]{0.24\textwidth}
		\includegraphics[width=\textwidth]{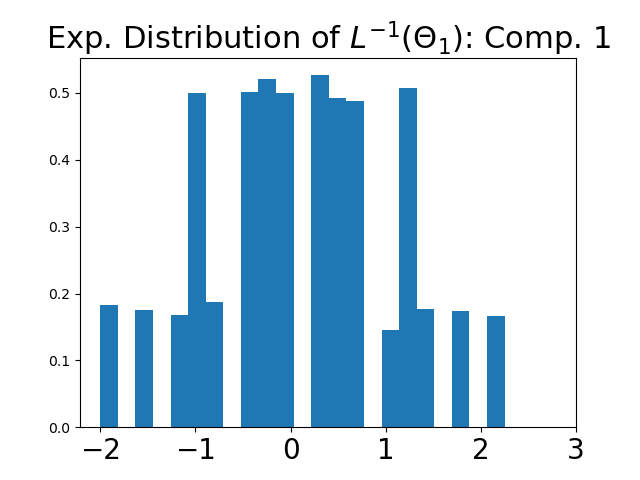}
	\end{subfigure}
	\begin{subfigure}[b]{0.24\textwidth}
		\includegraphics[width=\textwidth]{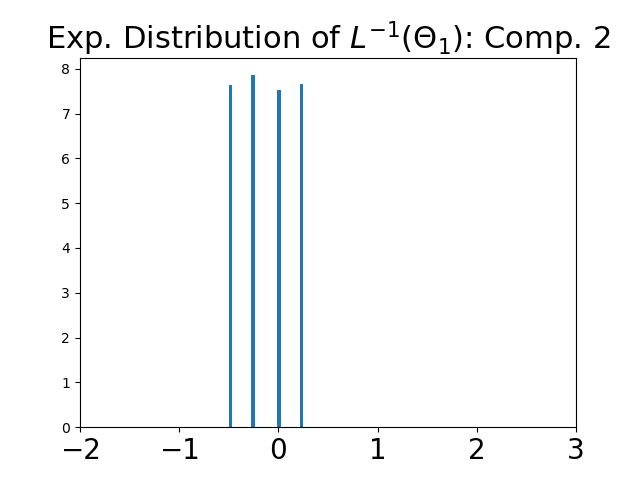}
	\end{subfigure}
	\begin{subfigure}[b]{0.24\textwidth}
		\includegraphics[width=\textwidth]{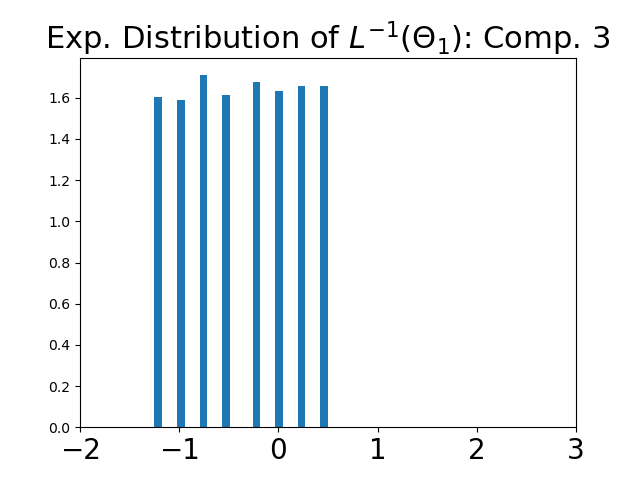}
	\end{subfigure}
	\begin{subfigure}[b]{0.24\textwidth}
		\includegraphics[width=\textwidth]{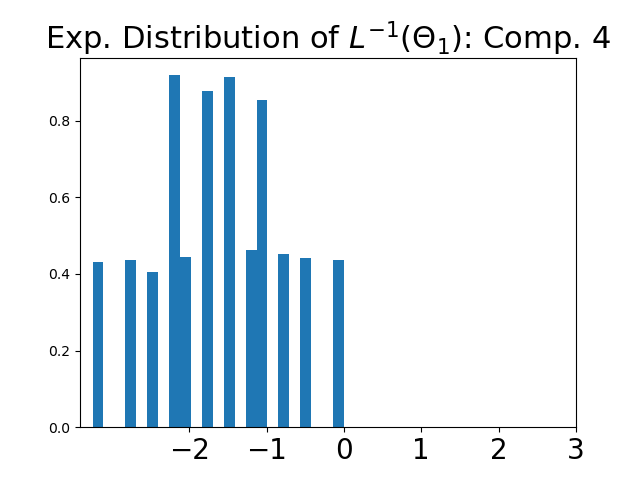}
	\end{subfigure}

	\caption{Demonstration of $L^{-1}(\forward_1)$. 
		The top row depicts the lossless backwards transform operator applied true distribution of the error as defined in \cref{eqn:transformlossyerror} for each element $i = \{1,2,3,4\}$, from left to right. 
		The bottom row depicts the lossless backwards transform operator applied the experimental error distribution from 10,000 trials for each element, respectively.}
	\label{fig:LinvDelta}
\end{figure}

%\clearpage
\section{Quantization Error Distributions}
\label{sec:diststats}
\teal{In this appendix, we analyze the error distributions resulting from the quantization of negabinary transform coefficients. The error distributions resulting from the quantization of negabinary transform coefficients are presented here in closed form.}
We focus primarily on the one-dimensional ($d = 1$) case, where we obtain different error distributions for each spatial location $i \in \{1, 2, 3, 4\}$ within a block.
As before, we assume that coefficient quantization errors are uniform i.i.d.\ random variables $\vecy$, such that $\vecy \in \mathbb{R}^{4^d}$. 
Let $\Delta$ denote the unit in the last place---the quantization step---and $\uniform(a, b)$ denote the uniform distribution on the interval $(a, b)$.
Without bias correction, we have two cases: either ${\vecy_i}^{\text{even}} \sim \uniform(-\frac{2}{3} \Delta, \frac{1}{3} \Delta)$ or ${\vecy_i}^{\text{odd}} \sim \uniform(-\frac{1}{3} \Delta, \frac{2}{3} \Delta)$ for all $i \in \{1,...,4^d\}$, depending on whether an even or odd number of least significant bits are discarded, respectively.
Because the even and odd cases are symmetric and differ only in sign, we will focus only on the even case and drop the superscript.
Additionally, we present results for $\Delta = 1$  as $\vecy_i$ are scaled uniformly by $\Delta$.

The errors, $\vecy_i$, in coefficients are mixed by the inverse decorrelating linear transform, $L^{-1}$ (see \Cref{eqn:T}), resulting in piecewise cubic error distributions, in the canonical basis (i.e., in the decompressed field values).
Let
\begin{equation*}
\vecx = \begin{bmatrix}\vecx_1 & \vecx_2 & \vecx_3 & \vecx_4\end{bmatrix}^t 
\end{equation*}
Due to linearity of expectation,
\begin{equation*}
  {\mathbb{E}}[\vecx] = {\mathbb{E}}[\vecy] L^{-1} \mathbf{1} \qquad
  \var(\vecx) = \var(\vecy) (L^{-1} \circ L^{-1}) \mathbf{1},
\end{equation*}
where ${\mathbb{E}}[\vecy_i] = \teal{\pm}\frac{1}{6}$, $\var(\vecy_i) = \frac{1}{12}$, and $\circ$ denotes Hadamard (element-wise) product.
\teal{Here, we assume $\tilde{L}^{-1} = L^{-1}$, which holds when the least significant zero-bits as the right bit shifts then occur only on least significant zero-bits, preserving the transform.}
That is, the expected value of the $i$-th element in the vector distribution, ${\mathbb{E}}[\vecx_i]$, is proportional to the $i^{\text{th}}$ row sum of $L^{-1}$; the variance of the $i$-th element in the vector distribution, $\var(\vecx_i)$, is proportional to the square 2-norm of the $i^{\text{th}}$ row of $L^{-1}$.
Of course, when bias correction is applied, {$\vecy_i \sim \uniform(-\frac{1}{2}, \frac{1}{2})$ for all $i$} and ${\mathbb{E}}[\vecx_i] ={\mathbb{E}}[\vecy_i] = 0$, though $\var(\vecx_i)$ remains the same as in the biased case.
The probability density of $\vecx_i$ can be parameterized as
\teal{
\begin{equation}
\begin{split}
  f_i(x) = s_i \sum_{j=1}^4 \Bigl[
    &\Bigl(|x - (c_i - u_{i,j})|^3 + |x - (c_i + u_{i,j})|^3\Bigr) - \\
    &\Bigl(|x - (c_i - v_{i,j})|^3 + |x - (c_i + v_{i,j})|^3\Bigr)
  \Bigr],
\label{eqn:error-distribution}
\end{split}
\end{equation}
}%
%\error{$s_i$ can't differ that much, can it?}
%
where $s_i$ is a scaling factor, $c_i = {\mathbb{E}}[\vecx_i]$ is the center of $\vecx_i$, and $\{u_{i,j}\}$ and $\{v_{i,j}\}$ are ``knots'' that define the intervals of the piecewise cubic $f_i(x)$, which is zero outside the support $\supp(\vecx_i) = (c_i - u_{i,4}, c_i + u_{i,4})$.

\begin{table}[tp]
\centering
\setlength{\tabcolsep}{3pt}
\renewcommand*{\arraystretch}{1.25}
\begin{tabular}{rc|cccccccc|cccr@{,}l}
& $i$ & $s_i$ & $c_i$
  & $\hat{u}_{i,1}$ & $\genfrac{}{}{0pt}{1}{\hat{u}_{i,2}}{\hat{u}_{i,3}}$ & $\hat{u}_{i,4}$
  & $\hat{v}_{i,1}$ & $\genfrac{}{}{0pt}{1}{\hat{v}_{i,2}}{\hat{v}_{i,3}}$ & $\hat{v}_{i,4}$
  & ${\mathbb{E}}[|\vecx_i|]$ & ${\mathbb{E}}[\vecx_i^2]$ & $\var(\vecx_i)$ & \multicolumn{2}{c}{$\supp(\vecx_i)$} \\
\hline
\multirow{4}*{\rotatebox{90}{\emph{biased}}}
& $1$
  & $\frac{2}{9}$
  & $-\frac{5}{24}$
  & $1$ & $5$ & $15$
  & $3$ & $7$ & $13$
  & $\frac{577445}{1119744}$ & $\frac{29}{72}$ & $\frac{23}{64}$ & $(-\frac{25}{12}$&$\frac{5}{3})$ \\
& $2$
  & $\frac{2}{15}$
  & $-\frac{5}{8}$
  & $1$ & $3$ & $15$
  & $5$ & $7$ & $11$
  & $\frac{7}{10}$ & $\frac{17}{24}$ & $\frac{61}{192}$ & $(-\frac{5}{2}$&$\frac{5}{4})$ \\
& $3$
  & $\frac{2}{15}$
  & $-\frac{1}{24}$
  & $1$ & $3$ & $15$
  & $5$ & $7$ & $11$
  & $\frac{713183}{1555200}$ & $\frac{23}{72}$ & $\frac{61}{192}$ & $(-\frac{23}{12}$&$\frac{11}{6})$ \\
& $4$
  & $\frac{2}{9}$
  & $\hp\frac{5}{24}$
  & $1$ & $5$ & $15$
  & $3$ & $7$ & $13$
  & $\frac{577445}{1119744}$ & $\frac{29}{72}$ & $\frac{23}{64}$ & $(-\frac{5}{3}$&$\frac{25}{12})$ \\
\hline
\multirow{4}*{\rotatebox{90}{\emph{unbiased}}}
& $1$
  & $\frac{2}{9}$
  & $\hp0$
  & $1$ & $5$ & $15$
  & $3$ & $7$ & $13$
  & $\frac{90199}{184320}$ & $\frac{23}{64}$ & $\frac{23}{64}$ & $(-\frac{15}{8}$&$\frac{15}{8})$ \\
& $2$
  & $\frac{2}{15}$
  & $\hp0$
  & $1$ & $3$ & $15$
  & $5$ & $7$ & $11$
  & $\frac{70259}{153600}$ & $\frac{61}{192}$ & $\frac{61}{192}$ & $(-\frac{15}{8}$&$\frac{15}{8})$ \\
& $3$
  & $\frac{2}{15}$
  & $\hp0$
  & $1$ & $3$ & $15$
  & $5$ & $7$ & $11$
  & $\frac{70259}{153600}$ & $\frac{61}{192}$ & $\frac{61}{192}$ & $(-\frac{15}{8}$&$\frac{15}{8})$ \\
& $4$
  & $\frac{2}{9}$
  & $\hp0$
  & $1$ & $5$ & $15$
  & $3$ & $7$ & $13$
  & $\frac{90199}{184320}$ & $\frac{23}{64}$ & $\frac{23}{64}$ & $(-\frac{15}{8}$&$\frac{15}{8})$ \\
\hline
\end{tabular}
\caption{Parameters and statistics describing the biased (top half) and bias corrected (bottom half) theoretical error distributions $\vecx_i$ for a 1D block.
Here $i$ indicates the spatial position within the block,
\teal{with $s_i$ scaling the amplitude of the distribution;}
\teal{see \Cref{eqn:error-distribution}}.
$\hat{u}_{i,j} = 8 u_{i,j}$ and $\hat{v}_{i,j} = 8 v_{i,j}$.
The mean, or bias in error (in ulps), is given by ${\mathbb{E}}[\vecx_i] = c_i$.
}
\label{tab:diststats}
\end{table}

\C{
\begin{table}[htp]
\centering
\setlength{\tabcolsep}{3pt}
\begin{tabular}{c|cccccccc|cccr@{,}l}
$i$ & $s_i$ & $c_i$
  & $u_{i,1}$ & $u_{i,2} = u_{i,3}$ & $u_{i,4}$
  & $v_{i,1}$ & $v_{i,2} = v_{i,3}$ & $v_{i,4}$
  & ${\mathbb{E}}[|\vecx_i|]$ & ${\mathbb{E}}[\vecx_i^2]$ & $\var(\vecx_i)$ & \multicolumn{2}{c}{$\supp(\vecx_i)$} \\[3pt]
\hline
$1$
  & $\frac{2}{9}$
  & $-\frac{5}{24}$
  & $\frac{1}{8}$ & $\frac{5}{8}$ & $\frac{15}{8}$
  & $\frac{3}{8}$ & $\frac{7}{8}$ & $\frac{13}{8}$
  & $\frac{577445}{1119744}$ & $\frac{29}{72}$ & $\frac{23}{64}$ & $(-\frac{25}{12}$&$\frac{5}{3})$ \\[3pt]
$2$
  & $\frac{2}{15}$
  & $-\frac{5}{8}$
  & $\frac{1}{8}$ & $\frac{3}{8}$ & $\frac{15}{8}$
  & $\frac{5}{8}$ & $\frac{7}{8}$ & $\frac{11}{8}$
  & $\frac{7}{10}$ & $\frac{17}{24}$ & $\frac{61}{192}$ & $(-\frac{5}{2}$&$\frac{5}{4})$ \\[3pt]
$3$
  & $\frac{2}{15}$
  & $-\frac{1}{24}$
  & $\frac{1}{8}$ & $\frac{3}{8}$ & $\frac{15}{8}$
  & $\frac{5}{8}$ & $\frac{7}{8}$ & $\frac{11}{8}$
  & $\frac{713183}{1555200}$ & $\frac{23}{72}$ & $\frac{61}{192}$ & $(-\frac{23}{12}$&$\frac{11}{6})$ \\[3pt]
$4$
  & $\frac{2}{9}$
  & $\hp\frac{5}{24}$
  & $\frac{1}{8}$ & $\frac{5}{8}$ & $\frac{15}{8}$
  & $\frac{3}{8}$ & $\frac{7}{8}$ & $\frac{13}{8}$
  & $\frac{577445}{1119744}$ & $\frac{29}{72}$ & $\frac{23}{64}$ & $(-\frac{5}{3}$&$\frac{25}{12})$ \\[3pt]
\hline
$1$
  & $\frac{2}{9}$
  & $\hp0$
  & $\frac{1}{8}$ & $\frac{5}{8}$ & $\frac{15}{8}$
  & $\frac{3}{8}$ & $\frac{7}{8}$ & $\frac{13}{8}$
  & $\frac{90199}{184320}$ & $\frac{23}{64}$ & $\frac{23}{64}$ & $(-\frac{15}{8}$&$\frac{15}{8})$ \\[3pt]
$2$
  & $\frac{2}{15}$
  & $\hp0$
  & $\frac{1}{8}$ & $\frac{3}{8}$ & $\frac{15}{8}$
  & $\frac{5}{8}$ & $\frac{7}{8}$ & $\frac{11}{8}$
  & $\frac{70259}{153600}$ & $\frac{61}{192}$ & $\frac{61}{192}$ & $(-\frac{15}{8}$&$\frac{15}{8})$ \\[3pt]
$3$
  & $\frac{2}{15}$
  & $\hp0$
  & $\frac{1}{8}$ & $\frac{3}{8}$ & $\frac{15}{8}$
  & $\frac{5}{8}$ & $\frac{7}{8}$ & $\frac{11}{8}$
  & $\frac{70259}{153600}$ & $\frac{61}{192}$ & $\frac{61}{192}$ & $(-\frac{15}{8}$&$\frac{15}{8})$ \\[3pt]
$4$
  & $\frac{2}{9}$
  & $\hp0$
  & $\frac{1}{8}$ & $\frac{5}{8}$ & $\frac{15}{8}$
  & $\frac{3}{8}$ & $\frac{7}{8}$ & $\frac{13}{8}$
  & $\frac{90199}{184320}$ & $\frac{23}{64}$ & $\frac{23}{64}$ & $(-\frac{15}{8}$&$\frac{15}{8})$ \\[3pt]
\hline
\end{tabular}
\caption{Parameters and statistics describing the biased (top half) and bias corrected (bottom half) theoretical error distributions $\vecx_i$ for a 1D block.
Here $i$ indicates the spatial position within the block.
The mean, or bias in error (in ulps), is given by ${\mathbb{E}}[\vecx_i] = c_i$.
}
\label{tab:diststats}
\end{table}
}

\Cref{tab:diststats} gives the parameters of the $\vecx_i$ error distributions (plotted in \Cref{fig:distributions}) both for the biased and unbiased case.
The rational knots have for clarity been scaled by their common denominator, $8$, e.g., $u_{1,4} = \frac{1}{8} \hat{u}_{1,4} = \frac{15}{8}$.
Note the multiplicity of knots $u_{i,2} = u_{i,3}$ and $v_{i,2} = v_{i,3}$.
The nonzero $c_i$ indicate spatially dependent bias in errors that is eliminated using our bias correction.
In addition to this variation in bias of $\vecx_i$, the distribution shapes also vary as a result of the differences in 2-norms of rows of $L^{-1}$, with equal shapes for $\vecx_1$ and $\vecx_4$ and for $\vecx_2$ and $\vecx_3$.
The error distributions for higher-dimensional data ($d \geq 2$) are obtained via convolution of these four base distributions, e.g., $\vecx_{i,j} = \vecx_i * \vecx_j$ when $d = 2$.

\section{Additional Figures}
\label{sec:additionalfigs}
\teal{This appendix presents additional figures comparing experimental and theoretical mean errors for ZFP compression across various scenarios, including different dimensions (1D, 2D, and 3D) and dynamic ranges (\( \rho \), the exponent range of values in a block). 
	The source data consists of synthetic \( 4^d \)-dimensional blocks with values ranging from \( 2^{e_{\text{min}}} \) to \( 2^{e_{\text{max}}} \), where \( \rho = e_{\text{max}} - e_{\text{min}} \). One million blocks were generated, compressed, and decompressed for each scenario, and the resulting errors were analyzed.
	The plots include:
	\begin{itemize}
		\item \textbf{Exp/Pred Error Ratio}: The ratio of experimental to theoretical mean error for each block element.
		\item \textbf{Side-by-Side Comparison}: Experimental (top) vs. theoretical (bottom) mean errors.
		\item \textbf{Relative Error}: The relative difference between experimental and theoretical mean errors as a function of \( \beta \) (bit planes kept).
	\end{itemize}
	These figures validate the theoretical framework and show how compression bias evolves with \( \rho \), \( \beta \), and dimensionality.}
\begin{figure}[h!]
	\centering
	\begin{subfigure}[b]{\textwidth}
		\includegraphics[width=\textwidth]{figures/syndata_test/simulated_1_rho_0.png}
		%		\caption{$\rho = 0$}
%		\label{fig:rho02d}
	\end{subfigure}
	%	\begin{subfigure}[b]{\textwidth}
		%		\includegraphics[width=\textwidth]{}
		%		\caption{$\rho = 7$}
		%		\label{fig:rho72d}
		%	\end{subfigure}
%	\begin{subfigure}[b]{\textwidth}
%		\includegraphics[width=\textwidth]{figures/syndata_test/simulated_1_rho_14.png}
%		%		\caption{$\rho = 14$}
%%		\label{fig:rho142d}
%	\end{subfigure}
	\caption{1-d Simulated Example: Each row depicts the ratio, a side-by-side comparison, and the relative error of the experimental and \teal{predicted error bias} error for $\rho =0 $, where $\rho$ is the \teal{exponent} range of values in a block defined by \cref{eqn:rho}. \teal{This figure uses the original biased variant of ZFP.}} 
	\label{fig:1drelative error_new}
\end{figure}

\begin{figure}[h!]
	\centering
	\begin{subfigure}[b]{\textwidth}
		\includegraphics[width=\textwidth]{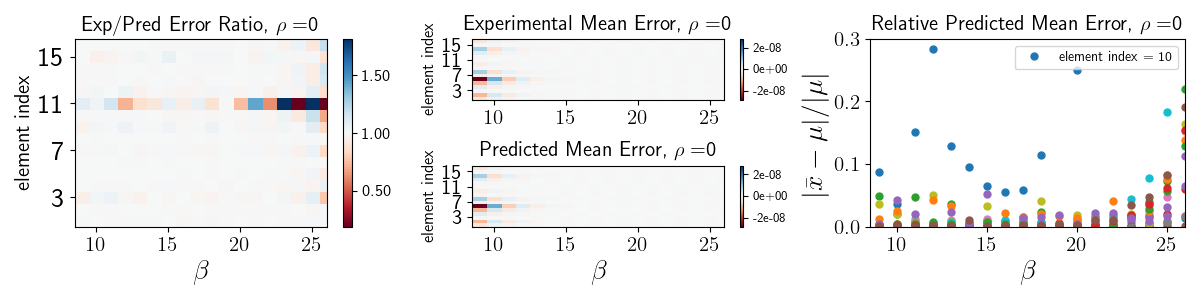}
%		\caption{$\rho = 0$}
		\label{fig:rho02d}
	\end{subfigure}
%	\begin{subfigure}[b]{\textwidth}
%		\includegraphics[width=\textwidth]{figures/syndata_test/simulated_2_rho_7.png}
%		\caption{$\rho = 7$}
%		\label{fig:rho72d}
%	\end{subfigure}
	\begin{subfigure}[b]{\textwidth}
		\includegraphics[width=\textwidth]{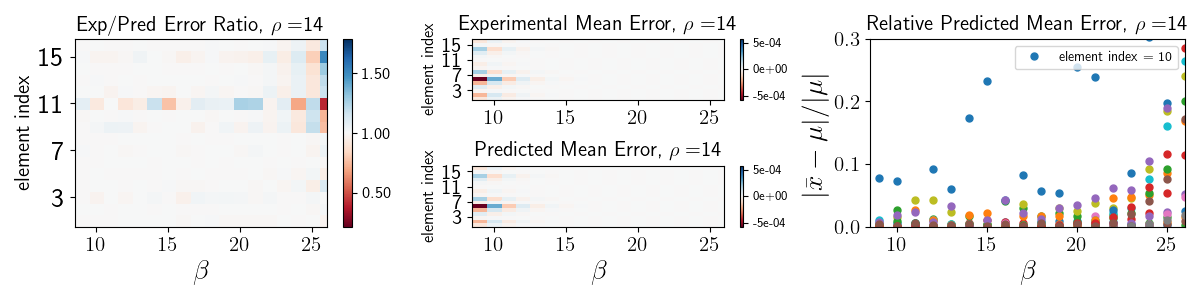}
%		\caption{$\rho = 14$}
		\label{fig:rho142d}
	\end{subfigure}
	\caption{2-d Simulated Example: Each row depicts the ratio, a side-by-side comparison, and the relative error of the experimental and \teal{predicted error bias} error for different $\rho$ values, where $\rho$ is the \teal{exponent} range of values in a block defined by \cref{eqn:rho}. \teal{This figure uses the original biased variant of ZFP.}} 
	\label{fig:2drelative error}
\end{figure}

\begin{figure}[h!]
	\centering
	\begin{subfigure}[b]{\textwidth}
		\includegraphics[width=\textwidth]{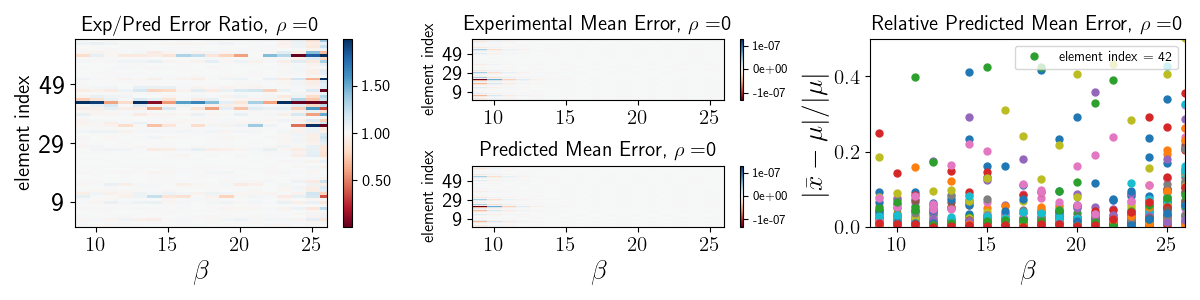}
%		\caption{$\rho = 0$}
		\label{fig:rho03d}
	\end{subfigure}
%	\begin{subfigure}[b]{\textwidth}
%		\includegraphics[width=\textwidth]{}
%		\caption{$\rho = 7$}
%		\label{fig:rho73d}
%	\end{subfigure}
	\begin{subfigure}[b]{\textwidth}
		\includegraphics[width=\textwidth]{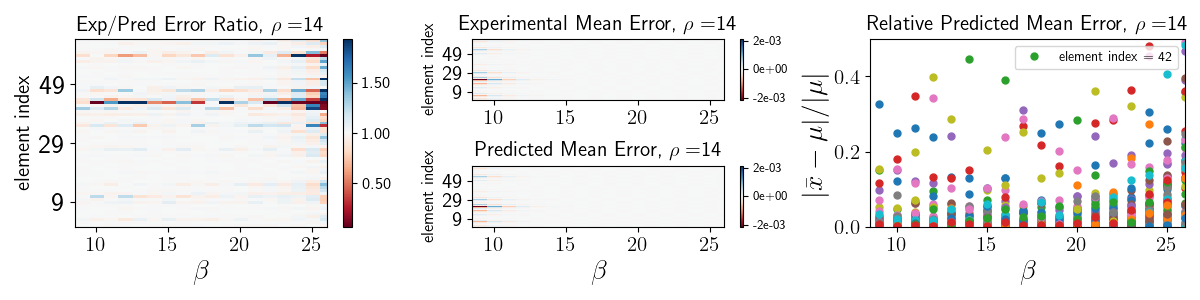}
%		\caption{$\rho = 14$}
		\label{fig:rho143d}
	\end{subfigure}
	
	\caption{3-d Simulated Example: Each row depicts the ratio, a side-by-side comparison, and the relative error of the experimental and \teal{predicted error bias} error for different $\rho$ values, where $\rho$ is the \teal{exponent} range of values in a block defined by \cref{eqn:rho}. \teal{This figure uses the original biased variant of ZFP.}} 
	\label{fig:3drelative error}
\end{figure}

 \begin{figure}
	\centering
	 	\begin{subfigure}[b]{\textwidth}
		 		\includegraphics[width=\textwidth]{figures/syndata_test/precompression_simulated_d_1_i_0.png}
		% 		    \caption{$\rho = 0$}
%		 		    \label{fig:precompression_1rho0}
		 	\end{subfigure}
	% 	\begin{subfigure}[b]{\textwidth}
		% 		\includegraphics[width=\textwidth]{figures/syndata_test/precompression_simulated_d_1_i_7.png}
		% 		    \caption{$\rho = 7$}
		%\label{fig:precompression_1rho7}
		% 	\end{subfigure}
%	\begin{subfigure}[b]{\textwidth}
%		\includegraphics[width=\textwidth]{figures/syndata_test/precompression_simulated_d_1_i_14.png}
%		% 		    \caption{$\rho = 14$}
%%		\label{fig:precompression_1rho14}
%	\end{subfigure}
	
\caption{1-D Simulated {\it Precompression} Rounding Example: The left and right figures depict the unbiased and biased experimental mean error, respectively, using {\it precompression} and the original variant, while the middle figure shows a side-by-side comparison of the unbiased and biased scaled experimental mean error by $\beta$ for $\rho = 0$, where $\rho$ is the exponent range of values in a block defined by \cref{eqn:rho}.} 
	\label{fig:precomp_new} 	
\end{figure}

\begin{figure}
	\centering
		\begin{subfigure}[b]{\textwidth}
			\includegraphics[width=\textwidth]{figures/syndata_test/postcompression_simulated_d_1_i_0.png}
		%	\caption{$\rho = 0$}
%			\label{fig:postcompression_1rho0}
			\end{subfigure}
%		\begin{subfigure}[b]{\textwidth}
%			\includegraphics[width=\textwidth]{figures/syndata_test/postcompression_simulated_d_1_i_7.png}
		%	\caption{$\rho = 7$}
		%	\label{fig:postcompression_1rho7}
		%	\end{subfigure}
%	\begin{subfigure}[b]{\textwidth}
%		\includegraphics[width=\textwidth]{figures/syndata_test/postcompression_simulated_d_1_i_14.png}
%		%	\caption{$\rho = 14$}
%%		\label{fig:postcompression_1rho14f}
%	\end{subfigure}
	
	\caption{1-d Simulated {\it Postcompression} Rounding Example: \teal{The left figure depicts the unbiased experimental mean error using {\it postcompression} rounding, while the right figure shows a side-by-side comparison of the unbiased and biased scaled experimental mean error by $\beta$ for $\rho = 0$},  where $\rho$ is the \teal{exponent} range of values in a block defined by \cref{eqn:rho}. }
	\label{fig:postcomp_new}
\end{figure}

\begin{figure}[h!]
	\centering
	\begin{subfigure}[b]{\textwidth}
		\includegraphics[width=\textwidth]{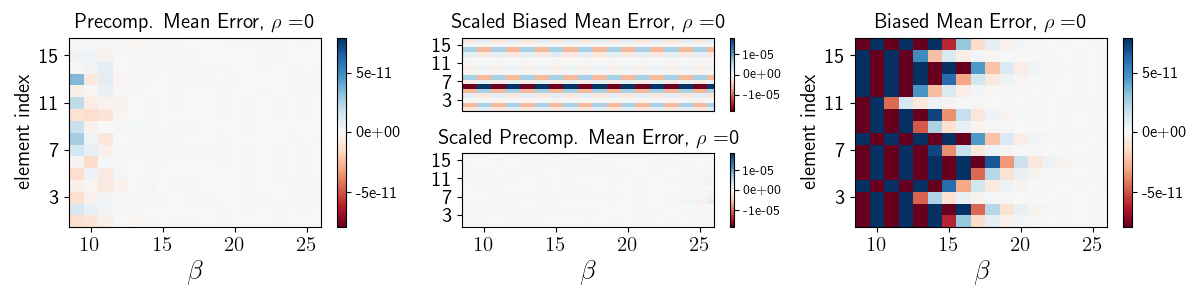}
%		\caption{$\rho = 0$}
		\label{fig:precompression_2rho0}
	\end{subfigure}
%	\begin{subfigure}[b]{\textwidth}
%		\includegraphics[width=\textwidth]{}
%		\caption{$\rho = 7$}
%		\label{fig:precompression_2rho7}
%	\end{subfigure}
	\begin{subfigure}[b]{\textwidth}
		\includegraphics[width=\textwidth]{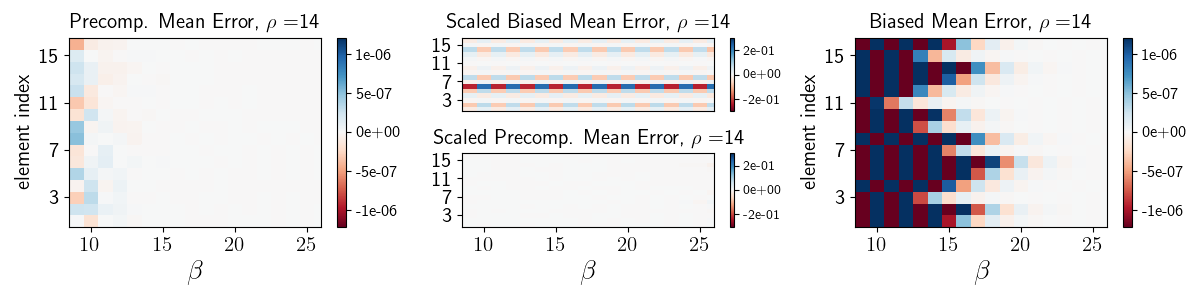}
%		\caption{$\rho = 14$}
		\label{fig:precompression_2rho14}
	\end{subfigure}
	
	\caption{2-d Simulated Precompression Example: For each row, \teal{the left and right figures depict the unbiased and biased experimental mean error, respectively, using {\it precompression} and the original variant. The middle figure shows a side-by-side comparison of the unbiased and biased scaled experimental mean error by $\beta$ for} different $\rho$ values, where $\rho$ is the \teal{exponent} range of values in a block defined by \cref{eqn:rho}.}
	\label{fig:precompression2d}
\end{figure}

\begin{figure}[h!]
	\centering
	\begin{subfigure}[b]{\textwidth}
		\includegraphics[width=\textwidth]{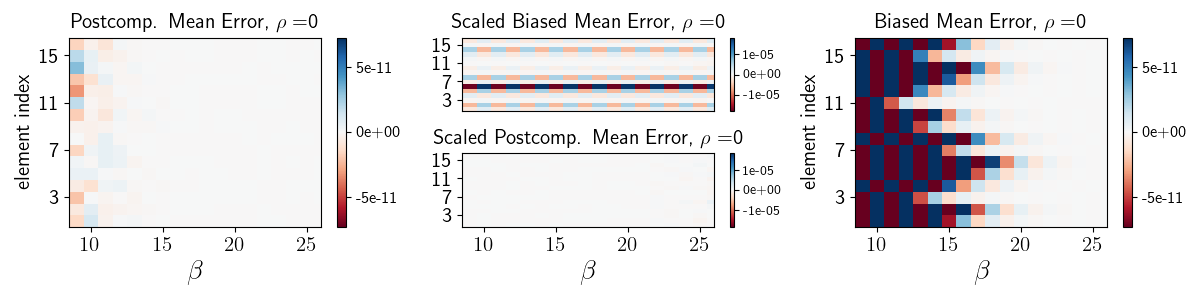}
%		\caption{$\rho = 0$}
		\label{fig:postcompression_2rho0}
	\end{subfigure}
%	\begin{subfigure}[b]{\textwidth}
%		\includegraphics[width=\textwidth]{}
%		\caption{$\rho = 7$}
%		\label{fig:postcompression_side_2rho0}
%	\end{subfigure}
	\begin{subfigure}[b]{\textwidth}
		\includegraphics[width=\textwidth]{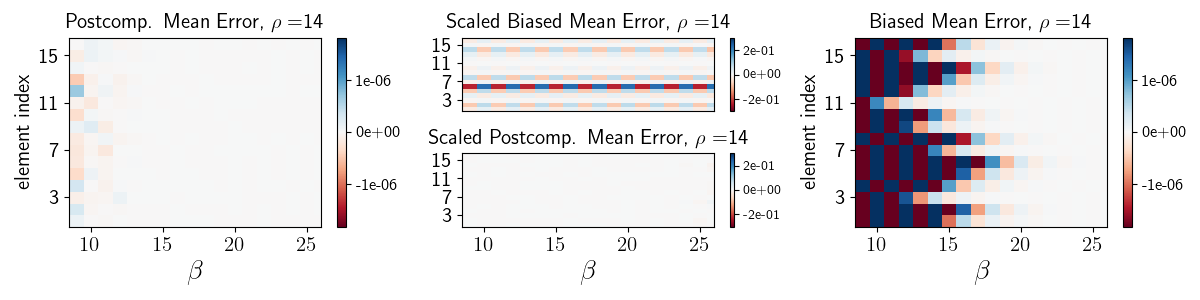}
%		\caption{$\rho = 14$}
		\label{fig:postcompression_ratio_2rho0}
	\end{subfigure}
	
	\caption{2-d Simulated Postcompression Example: For each row, \teal{the left and right figures depict the unbiased and biased experimental mean error, respectively, using {\it postcompression} and the original variant. The middle figure shows a side-by-side comparison of the unbiased and biased scaled experimental mean error by $\beta$ for} different $\rho$ values, where $\rho$ is the \teal{exponent} range of values in a block defined by \cref{eqn:rho}.}
	\label{fig:postcompression2d}
\end{figure}

\begin{figure}[h!]
	\centering
	\begin{subfigure}[b]{\textwidth}
		\includegraphics[width=\textwidth]{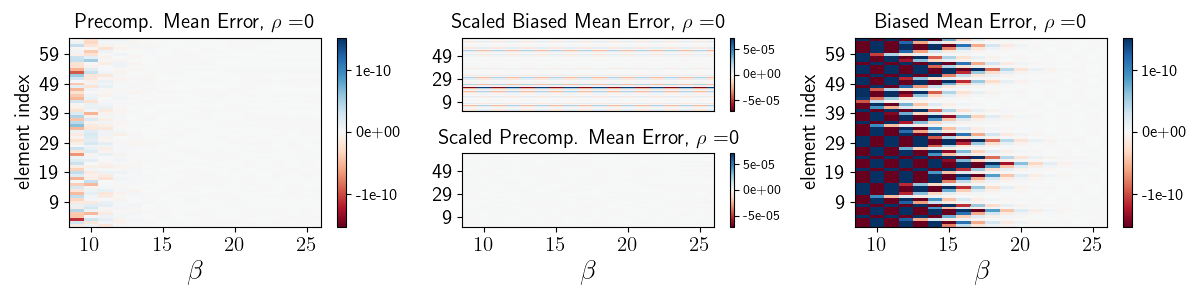}
%		\caption{$\rho = 0$}
		\label{fig:precompression_3rho0}
	\end{subfigure}
%	\begin{subfigure}[b]{\textwidth}
%		\includegraphics[width=\textwidth]{}
%		\caption{$\rho = 7$}
%		\label{fig:precompression_side_3rho0}
%	\end{subfigure}
	\begin{subfigure}[b]{\textwidth}
		\includegraphics[width=\textwidth]{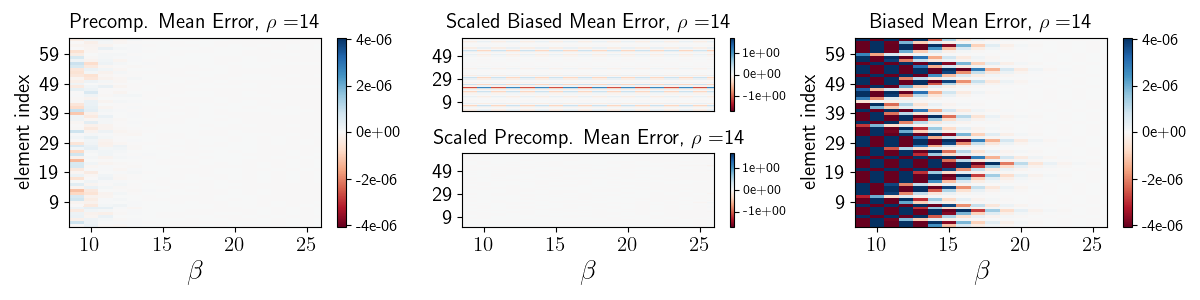}
%		\caption{$\rho = 14$}
		\label{fig:precompression_ratio_3rho0}
	\end{subfigure}
	
	\caption{3-d Simulated Precompression Example: For each row, \teal{the left and right figures depict the unbiased and biased experimental mean error, respectively, using {\it precompression} and the original variant. The middle figure shows a side-by-side comparison of the unbiased and biased scaled experimental mean error by $\beta$ for} different $\rho$ values, where $\rho$ is the \teal{exponent} range of values in a block defined by \cref{eqn:rho}.}
	\label{fig:precompression3d}
\end{figure}

\begin{figure}[h!]
	\centering
	\begin{subfigure}[b]{\textwidth}
		\includegraphics[width=\textwidth]{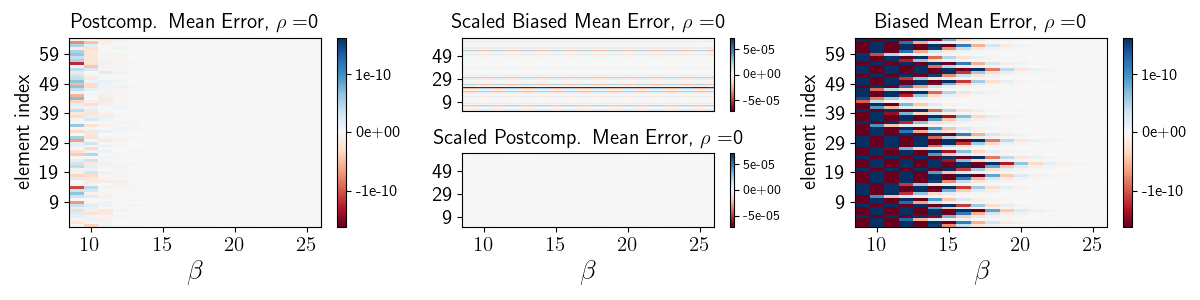}
%		\caption{$\rho = 0$}
		\label{fig:postcompression_3rho0}
	\end{subfigure}
%	\begin{subfigure}[b]{\textwidth}
%		\includegraphics[width=\textwidth]{}
%		\caption{$\rho = 7$}
%		\label{fig:postcompression_side_3rho0}
%	\end{subfigure}
	\begin{subfigure}[b]{\textwidth}
		\includegraphics[width=\textwidth]{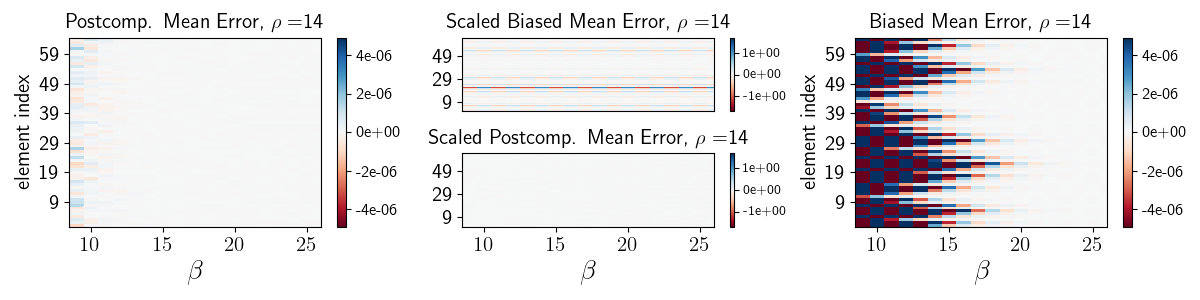}
%		\caption{$\rho = 14$}
		\label{fig:postcompression_ratio_3rho0}
	\end{subfigure}
	
	\caption{3-d Simulated Postcompression Example: For each row, \teal{the left and right figures depict the unbiased and biased experimental mean error, respectively, using {\it postcompression} and the original variant. The middle figure shows a side-by-side comparison of the unbiased and biased scaled experimental mean error by $\beta$ for} different $\rho$ values, where $\rho$ is the \teal{exponent} range of values in a block defined by \cref{eqn:rho}.}
	
	\label{fig:postcompression3d}
\end{figure}

\end{document}